%%%%%%%%%%%%%%%%%%%%%%%%%%%%%%%%%%%%%%%%%%%%%%%%
%
%         R. Boltje, D. Yilmaz
%
%         The $A$-fibered Burnside Ring as $A$-Fibered Biset Functor in Characteristic Zero
%
%.        Revised version

\documentclass[10pt]{article}

\input diagram.tex
\usepackage{amsmath} % eg. for \begin{equation}
\usepackage[mathscr]{eucal}
\usepackage{amssymb} % eg. for \leqslant
\usepackage{theorem}

\topmargin=-1cm
\textheight=20cm
\oddsidemargin=0.5cm
\textwidth=15cm

%%%%%%%%%%%%%%%%%%%% THEOREM- UND GLEICHUNGSDEFINITIONEN %%%%%%%%%%%%%

\theoremstyle{change}  % puts numbers IN FRONT of "Theorem"
\newtheorem{theorem}{Theorem}[section] % defines environment "Theorem".
                                       % puts number "2.5" in front of
                                       % "Theorem", if counter
                                       %  "theorem"  has value 5 and
                                       %  counter "mysection" has value 2.
      % When counter "mysection" increases with "stepcounter"-command,
      % then counter "theorem" is automatically reset.
\newtheorem{lemma}[theorem]{Lemma}  % defines environment "Lemma", that
                                    % is numbered in the same way as the
                                    % environment theorem.
\newtheorem{proposition}[theorem]{Proposition}

\theorembodyfont{\rmfamily}  % has the effect that the content of Remark,
                             % Example, and Examples is set in
                             % "\rmfamily".
\newtheorem{Remark}[theorem]{Remark}

\newtheorem{definition}[theorem]{Definition}
\newtheorem{notation}[theorem]{Notation}
\newtheorem{nothing}[theorem]{} % empty Theoremumgebung.

\newenvironment{proof}{\noindent{\bf Proof}\ }{\qed\bigskip}

%\newcounter{equation}[theorem]
      % The new environment has the same effect as the old one except
      % for creating the label (2.5.c) if it is the third equation in
      % the environment "Theorem 2.5".
%\renewcommand{\theequation}{\thetheorem.{\rm \alph{equation}}}

%%%%%%%%%%%%%%%%%%%%%%%%%%%% MACROS %%%%%%%%%%%%%%%%%%%%%%%%%%%%%%%%%%

\renewcommand{\le}{\leqslant} % needs amssymb-Paket
\renewcommand{\ge}{\geqslant}

\newcommand{\alphatilde}{\tilde{\alpha}}
\newcommand{\Aut}{\mathrm{Aut}}

\newcommand{\betatilde}{\tilde{\beta}}

\newcommand{\calB}{\mathcal{B}}
\newcommand{\calC}{\mathcal{C}}
        
\newcommand{\calD}{\mathcal{D}}               

\newcommand{\calF}{\mathcal{F}}

\newcommand{\calM}{\mathcal{M}}

\newcommand{\calS}{\mathcal{S}}

\newcommand{\calT}{\mathcal{T}}
\newcommand{\calX}{\mathcal{X}}
\newcommand{\calXhat}{\hat{\calX}}
\newcommand{\calZ}{\mathcal{Z}}

\newcommand{\catfont}{\mathsf}

\newcommand{\cdotG}{\mathop{\cdot}\limits_{G}}
\newcommand{\cdotH}{\mathop{\cdot}\limits_{H}}

\newcommand{\cdotK}{\mathop{\cdot}\limits_{K}}
\newcommand{\cdott}[1]{\mathop{\cdot}\limits_{#1}}

\newcommand{\defl}{\mathrm{def}}

\newcommand{\fbar}{\bar{f}}

\newcommand{\Hom}{\mathrm{Hom}}

\newcommand{\id}{\mathrm{id}}
\newcommand{\im}{\mathrm{im}}
\newcommand{\Ind}{\mathrm{Ind}}
\newcommand{\ind}{\mathrm{ind}}
\newcommand{\infl}{\mathrm{inf}}

\newcommand{\isom}{\mathrm{iso}}

\newcommand{\KK}{\mathbb{K}}

\newcommand{\lambdabar}{\bar{\lambda}}

\newcommand{\lexp}[2]{\setbox0=\hbox{$#2$} \setbox1=\vbox to
                 \ht0{}\,\box1^{#1}\!#2}

\newcommand{\lMod}[1]{\llap{\phantom{|}}_{#1}\catfont{Mod}}

\newcommand{\myiso}{\buildrel\sim\over\to}

\newcommand{\Out}{\mathrm{Out}}

\newcommand{\Phitilde}{\tilde{\Phi}}
\newcommand{\phibar}{\bar{\phi}}

\newcommand{\phitilde}{\tilde{\phi}}

\newcommand{\psitilde}{\tilde{\psi}}
\newcommand{\qed}{\nobreak\hfill
                  \vbox{\hrule\hbox{\vrule\hbox to 5pt
                  {\vbox to 8pt{\vfil}\hfil}\vrule}\hrule}}
\newcommand{\QQ}{\mathbb{Q}}

\newcommand{\res}{\mathrm{res}}

\newcommand{\stab}{\mathrm{stab}}

\newcommand{\tor}{\mathrm{tor}}

\newcommand{\Ubar}{\bar{U}}

\newcommand{\tw}{\mathrm{tw}}

\newcommand{\zetabar}{\bar{\zeta}}

\newcommand{\ZZ}{\mathbb{Z}}

%%%%%%%%%%%%%%%%%%%%%%%%%%%% TITLE %%%%%%%%%%%%%%%%%%%%%%%%%%%%%%%%%%%

\title{The $A$-fibered Burnside Ring as $A$-Fibered Biset Functor in Characteristic Zero\footnote{MR Subject Classification 19A22, 20C15; Keywords: Burnside ring, monomial Burnside ring, biset functors, fibered biset functors}}
\author{\small Robert Boltje\thanks{Corresponding author} \ and Deniz Y\i lmaz\\
        \small Department of Mathematics\\
        \small University of California\\
        \small Santa Cruz, CA 95064\\
        \small U.S.A.\\
        \small boltje@ucsc.edu and deyilmaz@ucsc.edu}
\date{September 7, 2019\\ revised August 12, 2020}

%%%%%%%%%%%%%%%%%%% BEGIN %%%%%%%%%%%%%%%%%%%%%%%%%%%%%%%%%%%%%%%%%%%%

\begin{document}

\sloppy

\maketitle

%%%%%%%%%%%%%%%%%%%%%%%%%%%% ABSTRACT %%%%%%%%%%%%%%%%%%%%%%%%%%%%%%%%

\begin{abstract}
Let $A$ be an abelian group such that $G^*:=\Hom(G,A)$ is finite for all finite groups $G$, and let $\KK$ be a field of characteristic zero which is a splitting field for $G^*$ for all finite groups $G$. In this paper we prove fundamental properties of the $A$-fibered Burnside ring functor $B_\KK^A$ as an $A$-fibered biset functor over $\KK$. This includes a description of the composition factors of $B^A_\KK$ and the lattice of subfunctors of $B_\KK^A$ in terms of what we call $B^A$-pairs and a poset structure on their isomorphism classes. Unfortunately, we are not able to classify $B^A$-pairs. The results of the paper extend results of Co\c{s}kun and Y\i lmaz for the $A$-fibered Burnside ring functor restricted to $p$-groups and results of Bouc in the case that $A$ is trivial, i.e., the case of the Burnside ring functor as a biset functor over fields of characteristic zero. In the latter case, $B^A$-pairs become Bouc's $B$-groups which are also not known in general.
\end{abstract}

%%%%%%%%%%%%%%%%%%%%%% INTRODUCTION %%%%%%%%%%%%%%%%%%%%%%%%%%%%%%%%%%

\section{Introduction}\label{sec intro}

Let $A$ be a finite group and let $k$ be a commutative ring. An $A$-fibered biset functor $F$ over $k$ is, informally speaking, a functor that assigns to each finite group $G$ a $k$-module $F(G)$ together with maps $\res^G_H\colon F(G)\to F(H)$ and $\ind^G_H\colon F(H)\to F(G)$, whenever $H\le G$, called restriction and induction, maps $\infl_{G/N}^G\colon F(G/N)\to F(G)$ and $\defl_{G/N}^G\colon F(G)\to F(G/N)$, whenever $N$ is a normal subgroup of $G$, called inflation and deflation, and maps $\isom_f\colon F(G)\to F(H)$, whenever $f\colon G\to H$ is an isomorphism. Moreover, the abelian group $G^*:=\Hom(G,A)$ acts $k$-linearly on $F(G)$ for every finite group $G$. These operations satisfy natural relations. Standard examples are various representation rings of $KG$-modules, for a field $K$ and $A=K^\times$. In this case, $G^*$ is the group of one-dimensional $KG$-modules acting by multiplication on these representation rings. In \cite{BoltjeCoskun2018} the simple $A$-fibered biset functors were parametrized. If $A$ is the trivial group then one obtains the well-established theory of biset functors, see~\cite{Bouc2010} as special case. $A$-fibered biset functors over $k$ can also be interpreted as the modules over the Green biset functor $B^A_k$, where $B^A_k(G)$ is the $A$-fibered Burnside ring of $G$ over $k$ (also called the $K$-monomial Burnside ring of $G$ over $k$, when $A=K^\times$ for a field $K$), as introduced by Dress in \cite{Dress1971}. Another natural example of $A$-fibered biset functors (without deflation) is the unit group functor $G\mapsto B^A(G)^\times$. This structure was established in a recent paper by Bouc and Mutlu, see~\cite{BoucMutlu2019} and generalizes the biset functor structure on the unit group $B(G)^\times$ of the Burnside ring.

\smallskip
Representation rings carry more structure when viewed as $A$-fibered biset functors compared to the biset functor structure. One of the goals is to understand their composition factors as such functors. By various induction theorems (e.g.~by Brauer, Artin, Conlon) some of these representation rings can be viewed as quotient functors of the functor $B_k^A$ for appropriate $A$ and $k$. Thus, it is natural to first investigate the lattice of subfunctors of $B_k^A$ and its composition factors. This is the objective of this paper, where $k$ is a field of characteristic zero containing sufficiently many roots of unity. Co\c{s}kun and Y\i lmaz achieved this already in \cite{CoskunYilmaz2019} for the same functor category restricted to finite $p$-groups for fixed $p$, and for $A$ being a cyclic $p$-group. The choice of $A$ being cyclic allows them to use results on primitive idempotents and species of $B_k^A(G)$ from \cite{Barker2004}, which were based on embedding $A$ into $k^\times$, for $k$ a field that is large enough. Our choice of $A$ is more general (using only that $G^*$ is finite for all finite groups $G$), thereby requiring a more complicated parametrizing set for the species and primitive idempotents of $B_k^A(G)$. This way the roles of $A$ and $k$ are kept as separate as possible while still implying that $B_k^A(G)$ is split semisimple.

\smallskip
The paper is arranged as follows. In Section~\ref{sec A-fibered bisets} we recall basic facts about the $A$-fibered Burnside ring $B^A_k(G)$, about fibered biset functors, and the definition of $B^A_k$ as fibered biset functor.
In Section~\ref{sec primitive idempotents} we parametrize the set of primitive idempotents of $B_\KK^A(G)$ over a field $\KK$ of characteristic $0$ which contains enough roots of unity in relation to the finite element orders of $A$. We also derive an explicit formula for these idempotents, using results from \cite{BoltjeRaggiValero2019} on the $-_+$-construction. We take advantage of the fact that the Green biset functor $B^A_\KK$ arises as the $-_+$-construction of the Green biset functor $G\mapsto \KK G^*$. Interestingly, the idempotent formula we derive in Theorem~\ref{thm idempotent formula} is different from the one given by Barker in \cite{Barker2004} when specializing to the more restrictive cases of $A$ considered there. It is used as a crucial tool in the following sections. In Section~\ref{sec elementary operations on primitive idempotents} we provide formulas for the action of inductions, restrictions, inflations, deflations, isomorphisms and twists by $\phi\in G^*$ on these idempotents. Crucial among those is the action of $\defl^G_{G/N}$, which maps a primitive idempotent of $B^A_\KK(G)$ to a scalar multiple of a primitive idempotent of $B^A_{\KK}(G/N)$. After establishing three technical lemmas in Section~\ref{sec two lemmas}, this mysterious scalar is studied in more depth in Section~\ref{sec m}. The vanishing of this scalar is a condition that leads to the notion of a $B^A$-pair $(G,\Phi)$, where $G$ is a finite group and $\Phi\in\Hom(G^*,\KK^\times)$, in Section~\ref{sec 
BA-pairs and the subfunctors E}. There, we also study particular subfunctors $E_{(G,\Phi)}$ of $B_\KK^A$. In Section~\ref{sec subfunctors of BA}, we show that every subfunctor of $B_\KK^A$ is a sum of some of the functors $E_{(G,\Phi)}$ and that the subfunctors of $B_\KK^A$ are in bijection with the set of subsets of isomorphism classes of $B^A$-pairs that are closed from above with respect to a natural partial order $\preccurlyeq$, see Theorem~\ref{thm lattice iso}. In Section~\ref{sec composition factors of BA} we show that the composition factors of $B^A_\KK$ are parametrized by isomorphism classes of $B^A$-pairs. For a given $B^A$-pair, we also determine the parameter (a quadruple) of the corresponding composition factor in terms of the parametrization of all simple $A$-fibered biset functors over $\KK$ given in \cite{BoltjeCoskun2018}. Finally, in Section~\ref{sec A in K}, we consider the special case that $A$ is a subgroup of $\KK^\times$. In this case, a natural isomorphism $G/O(G)\myiso \Hom(G^*,\KK^\times)$ for a normal subgroup $O(G)$ of $G$ depending on $A$, allows to reinterpret the set of $B^A$-pairs and makes our results compatible with the language and setup in \cite{Barker2004} and \cite{CoskunYilmaz2019}.

\smallskip
The approach in this paper follows closely the blueprint in \cite[Section~5]{Bouc2010} for the case $A=\{1\}$. However, the presence of the fiber group $A$ requires additional ideas and techniques to achieve the analogous results. The main technical problem is that a transitive $A$-fibered biset with stabilizer pair $(U,\phi)$, does in general not factor through the group $q(U)\cong p_i(U)/k_i(U)$, $i=1,2$, since $\phi$ is in general non-trivial when restricted to $k_1(U)\times k_2(U)$.

\begin{notation}\label{not intro}
For a finite group $G$ we denote by $\exp(G)$ the exponent of $G$. If $X$ is a left $G$-set, we write $x=_Gy$ if two elements $x$ and $y$ of $X$ belong to the same $G$-orbit. For $x\in X$, we denote by $G_x$ or $\stab_G(x)$ the stabilizer of $x$ in $G$. By $X^G$ we denote the set of $G$-fixed points in $X$ and by $[G\backslash X]$ a set of representatives of the $G$-orbits of $X$. For subgroups $H$ and $K$ of $G$, we denote by $[H\backslash G/K]$ a set of representatives of the $(H,K)$-double cosets of $G$.

For an abelian group $A$, we denote by $\tor(A)$ its subgroup of elements of finite order, and, for a ring $R$, we denote by $R^\times$ its group of units.
\end{notation}

%%%%%%%%%%%%%%%%%%%%%%% SECTION 2 %%%%%%%%%%%%%%%%%%%%%%%%%%%%%%%%%%

\section{Prerequisites on the $A$-fibered biset functor $B_k^A$}\label{sec A-fibered bisets}

Throughout this paper, we fix a multiplicatively written abelian group $A$. For any finite group $G$ we set $G^*:=\Hom(G,A)$, which we view as abelian group with point-wise multiplication. We will freely use the language of bisets and biset functors as developed in \cite[Chapters 2 and 3]{Bouc2010}. Throughout, $G$, $H$, and $K$ denote finite groups, and $k$ denotes a commutative ring.

\begin{nothing}\label{noth A-fibered Burnside ring}
(a) The {\em $A$-fibered Burnside ring} $B^A(G)$ of $G$ is defined as the Grothendieck group of the category of $A$-fibered left $G$-sets, see \cite[1.1 and 1.7]{BoltjeCoskun2018}. It has a standard $\ZZ$-basis consisting of the $G$-orbits $[U,\phi]_G$ of pairs $(U,\phi)$, where $U\le G$ and $\phi\in U^*$. The set $\calM(G)=\calM^A(G)$ of such pairs has a natural $G$-poset structure, see \cite[1.2]{BoltjeCoskun2018}. The multiplication in $B^A(G)$ is given by
\begin{equation}\label{eqn BAG multiplication}
  [U,\phi]_G\cdot[V,\psi]_G =\sum_{g\in[U\backslash G/V]} 
  \bigl[U\cap\lexp{g}{V}, \phi|_{U\cap\lexp{g}{V}}\cdot(\lexp{g}{\psi})|_{U\cap\lexp{g}{V}}\bigr]_G\,,
\end{equation}
and $[G,1]_G$ is the identity. We set $B_k^A(G):=k\otimes_\ZZ B^A(G)$. If $A$ is the trivial group, we recover the Burnside ring $B(G)$ of $G$.

\smallskip
(b) We further set $B^A(G,H):=B^A(G\times H)$. This group can also be considered as the Grothendieck group of $A$-fibered $(G,H)$-bisets, see \cite[1.1 and 1.7]{BoltjeCoskun2018}. 
The standard basis elements of $B^A(G,H)$ will be denoted by $\left[\frac{G\times H}{U,\phi}\right]$, for $(U,\phi)\in\calM(G\times H)$. The tensor product of $A$-fibered bisets induces a bilinear map
\begin{equation}\label{eqn tensor product}
   -\cdotH -\colon B^A(G,H)\times B^A(H,K)\to B^A(G,K)\,,
\end{equation}
which behaves associatively. Its evaluation on standard basis elements is given by the formula (see \cite[Corollary 2.5]{BoltjeCoskun2018})
\begin{equation}\label{eqn tensor product formula}
   \left[\frac{G\times H}{U,\phi}\right] \cdotH \left[\frac{H\times K}{V,\psi}\right] 
   = \sum_{\substack{t\in [p_2(U)\backslash H/p_1(V)]\\ \phi_2|_{H_t} = \lexp{t}{\psi_1}|_{H_t}}}
   \left[\frac{G\times K}{U*\lexp{(t,1)}{V}, \phi*\lexp{(t,1)}{\psi}}\right]\,,
\end{equation}
where $H_t:= k_2(U)\cap\lexp{t}{k_1(V)}$. See \cite[1.2]{BoltjeCoskun2018} for the definition of $p_i(U)$ and $k_i(U)$, for $i=1,2$,
and \cite[2.3]{BoltjeCoskun2018} for the definition of $U*\lexp{(t,1)}{V}$ and $\phi*\lexp{(t,1)}{\psi}$. Recall from the latter that $\phi_i\in k_i(U)^*$, $i=1,2$, are defined as the unique elements satisfying $\phi|_{k_1(U)\times k_2(U)}=\phi_1\times \phi_2^{-1}$, with $\phi_2^{-1}$ denoting the inverse of $\phi_2$ in the group $k_2(U)^*$.

\smallskip
(c) Recall from \cite[3.1]{CoskunYilmaz2019} that there exists a ring homomorphism $\Delta\colon B^A(G)\to B^A(G,G)$, $[U,\phi]_G\mapsto \left[\frac{G\times G}{\Delta(U),\Delta(\phi)}\right]$, where $\Delta(U):=\{(u,u)\mid u\in U\}$ and $\Delta(\phi)(u,u):=\phi(u)$. Here, $B^A(G,G)$ is considered as ring via the multiplication $\cdot_G$ from (\ref{eqn tensor product}).
\end{nothing}

\begin{nothing}\label{noth A-fibered biset functors}
(a) The bilinear maps in (\ref{eqn tensor product}) are used as the definition of composition in the $k$-linear category $\calC_k^A$, whose objects are the finite groups and whose morphism set from $H$ to $G$ is $B_k^A(G,H)$. An {\em $A$-fibered biset functor over $k$} is a $k$-linear functor $F\colon \calC_k^A\to\lMod{k}$.  Together with natural transformations, these functors form an abelian category $\calF_k^A$. If $A$ is the trivial group, one recovers the biset category $\calC_k$ and the category of biset functors $\calF_k$ over $k$, see \cite[Sections~3.1 and 3.2]{Bouc2010}. 

\smallskip
(b) The association $G\mapsto B_k^A(G)$, together with the bilinear maps in (\ref{eqn tensor product}) applied to $K=1$ (and using the canonical identifications of $H\cong H\times\{1\}$ and $G\cong G\times \{1\}$) defines the $A$-fibered biset functor $B^A_k\in\calF_k^A$, which is the main object of study in this paper.

\smallskip
(c) If $f\colon A'\to A$ is a homomorphism between abelian groups, one obtains induced maps $\Hom(G,A')\to \Hom(G,A)$, $\calM^{A'}(G)\to\calM^A(G)$, ring homomorphisms $B^{A'}(G)\to B^A(G)$, and $k$-linear functors $\calC_k^{A'}\to\calC_k^A$ and $\calF_k^{A}\to\calF_k^{A'}$. In particular, when $A'$ is the trivial group, we obtain embeddings $B(G,H)\subseteq B^A(G,H)$, $[(G\times H)/U]\mapsto \left[\frac{G\times H}{U,1}\right]$, and $\calC_k\subseteq\calC_k^A$. This way, we can view the elementary bisets $\res^G_H$, $\ind_H^G$, $\infl_{G/N}^G$, $\defl^G_{G/N}$, $\isom_f$, for $H\le G$, $N\trianglelefteq G$, $f\colon G\myiso G'$, as elements in $B^A(G,H)$, $B^A(H,G)$, $B^A(G,G/N)$, $B^A(G/N,G)$, $B^A(G',G)$, respectively. One can verify quickly with (\ref{eqn tensor product formula}) for $K=\{1\}$, that their operations under the $A$-fibered biset functor structure of $B^A_k$ in (b) on standard basis elements are given by
\begin{equation*}
  \res^G_H([U,\phi]_G) = \sum_{g\in [H\backslash G/U]} [H\cap\lexp{g}{U}, (\lexp{g}{\phi})|_{H\cap\lexp{g}{U}}]_H\,,\quad
  \ind_H^G([V,\psi]_H) = [V,\psi]_G\,,
\end{equation*}
\begin{equation*}
  \infl_{G/N}^G([U/N,\phi]_{G/N}) = [U,\phi\circ\nu]_G\,,
\end{equation*}
where $\nu\colon U\to U/N$ is the natural epimorphism,
\begin{equation*}
  \defl^G_{G/N}([U,\phi]_G)=
  \begin{cases} [UN/N,\phitilde]_{G/N}, &\text{if $U\cap N\le \ker(\phi)$,}\\ 0 & \text{otherwise,} \end{cases}
\end{equation*}
where $\phitilde(uN):=\phi(u)$ for $u\in U$, and
\begin{equation*}
   \isom_f([U,\phi]_G) = [f(U),\phi\circ f^{-1}|_{f(U)}]_{G'}\,.
\end{equation*}
In particular, for $g\in G$ and $H\le G$, and $(U,\phi)\in\calM(H)$, we have
\begin{equation*}
   \lexp{g}{[U,\phi]_H} := \isom_{c_g} ([U,\phi]_H) = [\lexp{g}{U},\lexp{g}{\phi}]_{\lexp{g}{H}}\,,
\end{equation*}
where $c_g\colon H\myiso gHg^{-1}$ is the conjugation map.
For $\lambda\in G^*$ and $(U,\phi)\in \calM(G)$, one additionally has 
\begin{equation*}
  \tw_\lambda([U,\phi]_G)=[U,\lambda|_U\cdot\phi]_G\,,
\end{equation*}
the {\em twist} by $\lambda$, coming from the application of $\tw_\lambda:=\Delta([G,\lambda]_G)=\left[\frac{G\times G}{\Delta(G),\Delta(\lambda)}\right]\in B^A(G,G)$.
It is easily verified that $\res^G_H$, $\inf_{G/N}^G$ and $\isom_f$ are ring homomorphisms, and that $B^A_k(G)$ is a $kG^*$-algebra via $\tw$.
\end{nothing}

Using the above notation, one obtains a canonical decomposition of a standard basis element of $B^A(G,H)$ into elementary bisets and a standard basis element for smaller groups.

\begin{theorem}\label{thm can decomp}(\cite[Proposition~2.8]{BoltjeCoskun2018})
Let $(U,\phi)\in\calM(G\times H)$ and set $P:=p_1(U)$, $Q:=p_2(U)$, $K:=\ker(\phi_1)$, and $L:=\ker(\phi_2)$. Then $K\trianglelefteq P$, $L\trianglelefteq Q$, $K\times L\trianglelefteq U$, and 
\begin{equation*}
  \left[\frac{G\times H}{U,\phi}\right] = \ind_P^G \cdott{P} \infl_{P/K}^P \cdott{P/K}
  \left[\frac{P/K\times Q/L}{U/(K\times L),\phibar}\right] \cdott{Q/L}
  \defl^Q_{Q/L} \cdott{Q} \res^H_L\,,
\end{equation*}
where $\phibar\in (U/(K\times L))^*$ is induced by $\phi$ and $U/(K\times L)$ is viewed as subgroup of $P/K\times Q/L$ via the canonical isomorphism $(P\times Q)/(K\times L)\cong P/K\times Q/L$.
\end{theorem}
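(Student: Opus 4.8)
The plan is to establish the decomposition by working directly with the tensor product formula (\ref{eqn tensor product formula}) and the explicit descriptions of the elementary bisets recorded in \ref{noth A-fibered biset functors}(c). First I would unwind the right-hand side from the innermost factor outward. Starting with $\res^H_L$, which as an element of $B^A(H,L)$ corresponds to the standard basis element $\left[\frac{H\times L}{\Delta(L),1}\right]$ (using $\Delta(L)=\{(l,l)\mid l\in L\}\le H\times L$), I would compose it on the left with $\defl^Q_{Q/L}\in B^A(Q/L,Q)$; since $L\le Q\le H$, the double-coset and fixed-point conditions in (\ref{eqn tensor product formula}) collapse, and one is left with the biset $\left[\frac{Q/L\times L}{\{(lL,l)\mid l\in L\},1}\right]$, i.e.\ a ``deflation-restriction'' piece whose left projection is $Q/L$ and whose left kernel is trivial. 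The point of isolating $P$, $Q$, $K$, $L$ as in the statement is precisely that these intermediate composites have transparent stabilizer pairs.

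Next I would compose, on the left, with the ``middle'' standard basis element $\left[\frac{P/K\times Q/L}{U/(K\times L),\phibar}\right]$. Here one must check that $K\times L\le U$ so that the quotient $U/(K\times L)$ makes sense and sits inside $P/K\times Q/L$ via the canonical isomorphism; this is where I would invoke, or reprove in two lines, the normality assertions $K\trianglelefteq P$, $L\trianglelefteq Q$, $K\times L\trianglelefteq U$. These follow from the definition of $\phi_1,\phi_2$ in \ref{noth A-fibered Burnside ring}(b): $K=\ker\phi_1\trianglelefteq k_1(U)\trianglelefteq P$ and likewise for $L$, and $K\times L\le k_1(U)\times k_2(U)\trianglelefteq U$ with $\phi$ trivial on $K\times L$ by construction of $\phi_1,\phi_2$, so $K\times L\trianglelefteq U$. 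Composing the middle piece with the deflation-restriction piece, the sum in (\ref{eqn tensor product formula}) again has a single term because $p_2$ of the middle biset is $Q/L$ and $p_1$ of the deflation-restriction piece is also $Q/L$, the relevant double cosets are trivial, and the character-matching condition holds because the right character of the middle biset is $\phibar_2$, which by definition of the $\phi_i$'s on the quotient agrees with the transported trivial character. The resulting element should be $\left[\frac{P/K\times L}{(U/(K\times L))\ast\lexp{(1,1)}{\{(lL,l)\}},\,\ast\text{-character}\}}\right]$, which unwinds to a standard basis element supported on the subgroup $\{(uK,l)\in P/K\times L : \exists\, u'\in U,\ (u',l)\in U\text{ with }u'K=uK\}$; tracking the $\ast$-operation from \cite[2.3]{BoltjeCoskun2018} shows its character is induced by $\phi$.

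Then I would compose with $\infl_{P/K}^P$ on the left and finally with $\ind_P^G$ on the left. The inflation composite merges $P/K$ back up to $P$: using the formula for $\infl$ and another single-term instance of (\ref{eqn tensor product formula}), the left projection becomes $U$ itself (the full preimage in $P\times H$), the character becomes $\phi$ precomposed with the natural map, and one recovers exactly $(U,\phi)$ up to the induction step; induction $\ind_P^G$ then merely reinterprets the $(P,H)$-biset as a $(G,H)$-biset, i.e.\ $\ind_P^G\cdot_P\left[\frac{P\times H}{U,\phi}\right]=\left[\frac{G\times H}{U,\phi}\right]$ since $\ind$ acts by $[V,\psi]_P\mapsto[V,\psi]_G$. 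Assembling the four composites gives the claimed identity. The main obstacle I anticipate is not conceptual but bookkeeping: each application of (\ref{eqn tensor product formula}) requires verifying that the indexing set of double cosets degenerates to a single representative \emph{and} that the character-compatibility condition $\phi_2|_{H_t}=\lexp{t}{\psi_1}|_{H_t}$ is automatically satisfied, and one must carefully track the $\ast$-products $U\ast\lexp{(t,1)}{V}$ and $\phi\ast\lexp{(t,1)}{\psi}$ through three successive compositions without sign or inversion errors in the $\phi_2^{-1}$ convention. It is conceivable the cleanest route is instead to cite \cite[Proposition~2.8]{BoltjeCoskun2018} essentially verbatim, since this is exactly that statement; but if a self-contained argument is wanted, the inside-out composition sketched above is the way I would organize it.
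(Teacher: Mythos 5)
First, note that the paper offers no argument for this statement at all: it is imported verbatim from \cite[Proposition~2.8]{BoltjeCoskun2018}, so your closing remark --- that the cleanest route is simply to cite that result --- is exactly what the authors do, and in the context of this paper that citation is the whole ``proof''.

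Your self-contained inside-out computation is the right architecture (it is essentially the argument behind the cited proposition), but as written it has concrete defects. (1) The innermost composite is mishandled. The composition symbol $\cdott{Q}$ forces the last factor to be a morphism from $H$ to $Q$, i.e.\ $\res^H_Q$; the printed $\res^H_L$ is a typo, and following it literally derails your first step. The correct first composite is the single basis element $\left[\frac{(Q/L)\times H}{\{(qL,q)\mid q\in Q\},1}\right]$, with left projection $Q/L$, trivial left kernel, right group $H$; what you write, $\left[\frac{(Q/L)\times L}{\{(lL,l)\mid l\in L\},1}\right]$, is a different biset (its left projection is trivial, contradicting your own, otherwise correct, description), and you then carry $L$ as the right-hand group through the remaining composites --- if that were so, the final product would lie in $B^A(G,L)$ and could not equal $\left[\frac{G\times H}{U,\phi}\right]\in B^A(G,H)$. (2) Your justification of the character-matching in the middle composition is off: $\phibar_2$ is the character induced by $\phi_2$ on $k_2(U)/L$ and is faithful there, not trivial; the condition in (\ref{eqn tensor product formula}) holds simply because the group $H_t=k_2(U/(K\times L))\cap\lexp{t}{k_1(V)}$ is trivial (the deflation--restriction factor has trivial $k_1$), so it is vacuous. (3) The normality claims are argued by ``contained in a normal subgroup'', which is not valid since normality is not transitive; the correct short argument is that $\phi_1$, $\phi_2$ and $\phi|_{k_1(U)\times k_2(U)}$ are invariant under conjugation by $P$, $Q$ and $U$ respectively (using that $A$ is abelian), so their kernels $K$, $L$ and $K\times L$ are normal in $P$, $Q$ and $U$. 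With these repairs --- plus the observations $K\times\{1\}\le U$, $\{1\}\times L\le U$ and $\phi|_{K\times L}=1$, which are what make the successive star products return the image of $U$ in $(P/K)\times H$ and then $U$ itself after inflating, before $\ind_P^G$ merely reinterprets the $(P,H)$-biset as a $(G,H)$-biset --- your outline does go through.
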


\begin{Remark}\label{rem + construction}
Let $\calD_k$ be the subcategory of $\calC_k$ with the same objects as $\calC_k$, but with morphism sets generated by all elementary bisets, excluding inductions. In other words, $\Hom_{\calD_k}(H,G)\subseteq B_k(G,H)$ is the free $k$-module generated by all standard basis elements $[(G\times H)/U]$ with $p_1(U)=G$. Mapping $G$ to the group algebra $kG^*$ defines a Green biset functor $F$ on $\calD_k$ over $k$ in the sense of Romero's reformulation \cite[Definici\'on 3.2.7, Lema 4.2.3]{Romero2011} of Bouc's original definition \cite[Definition~8.5.1]{Bouc2010}, with restriction, inflation and isomorphisms defined as usual, viewing $\Hom(-,A)$ as contravariant functor, and deflation defined by
\begin{equation*}
   \defl^G_{G/N}(\phi):=\begin{cases} \phibar, \text{if $\phi|_N=1$,}\\ 0, \text{otherwise,}\end {cases}
\end{equation*}
whenever $N$ is a normal subgroup of $G$ and $\phi\in G^*$. Here, $\phibar\in (G/N)^*$ is induced by $\phi$. In fact, it is straightforward to check that all the relations in \cite[1.1.3]{Bouc2010} that do not involve inductions are satisfied. Thus, we are in the situation of \cite[Theorem~7.3(a),(b)]{BoltjeRaggiValero2019} and obtain via the $-_+$-construction a Green biset functor $F_+$ on $(\calD_k)_+=\calC_k$. The Green biset functor $F_+$ is isomorphic to the Green biset functor $B_k^A$ on $\calC_k$. This follows from \cite[Theorem~4.7(c)]{BoltjeRaggiValero2019} and by comparing the explicit formulas for the elementary biset operations in \ref{noth A-fibered biset functors}(c) with the explicit formulas in \cite[Remark~4.8]{BoltjeRaggiValero2019}. We will use this point of view repeatedly in Sections~\ref{sec primitive idempotents} and \ref{sec elementary operations on primitive idempotents}.
\end{Remark}

%%%%%%%%%%%%%%%%%%%% SECTION 3 %%%%%%%%%%%%%%%%%%%%%%%%%%%%%%%%%%%%%

\section{Primitive idempotents of $B_\KK^A(G)$}\label{sec primitive idempotents}

Throughout this section we assume that $G$ is a finite group such that $H^*=\Hom(H,A)$ is a finite abelian group for every $H\le G$. This is equivalent to $\tor_{\exp(G)}(A)$ being finite. Moreover, we assume that $\KK$ is a splitting field of characteristic zero for all $H^*$, $H\le G$. Note that this holds if and only if $\KK$ has a root of unity of order $\exp(\tor_{\exp(G)}(A))$. Also note that in this case $S^*$ is finite and $\KK$ is a splitting field for $S^*$, for each subquotient $S$ of $G$.

\smallskip
We define $\calX(G)$ as the set of all pairs $(H,\Phi)$ with $H\le G$ and $\Phi\in \Hom(H^*,\KK^\times)$ and note that $G$ acts on $\calX(G)$ by conjugation: $\lexp{g}{(H,\Phi)}:=(\lexp{g}{H},\lexp{g}{\Phi})$, with $\lexp{g}{\Phi}(\phi):=\Phi(\lexp{g^{-1}}{\phi})$, for $g\in G$, $(H,\Phi)\in\calX(G)$, and $\phi\in H^*$. The assumptions on $\KK$ imply that, for any $H\le G$,
\begin{equation}\label{eqn isom 1}
   \KK H^*\to \prod_{\Phi\in\Hom(H^*,\KK^\times)} \KK\,, \quad a\mapsto (s_\Phi^H(a))_{\Phi}\,,
\end{equation}
is an isomorphism of $\KK$-algebras. Here, we $\KK$-linearly extended $\Phi$ to a $\KK$-algebra homomorphism 
\begin{equation*}
  s_\Phi^H\colon \KK H^*\to \KK\,.
\end{equation*} 
The first orthogonality relation implies that, for $\Psi\in\Hom(H^*,\KK^\times)$, the element
\begin{equation}\label{eqn idempotent 1}
   e_\Psi^H:=\frac{1}{|H^*|} \sum_{\phi\in H^*} \Psi(\phi^{-1})\phi\in \KK H^*
\end{equation}
is the primitive idempotent of $\KK H^*$ which is mapped under the isomorphism in (\ref{eqn isom 1}) to the primitive idempotent $\epsilon_\Psi^H\in\prod_{\Phi}\KK$ whose $\Phi$-component is $\delta_{\Phi,\Psi}$.

For any $H\le G$ we consider the map
\begin{equation*}
  \pi_H\colon B_\KK^A(H)\to \KK H^*\,,\quad 
  [U,\phi]_H\mapsto\begin{cases} \phi & \text{if $U=H$,}\\ 0 & \text{otherwise.} \end{cases}
\end{equation*}
It is easily seen by the multiplication formula in (\ref{eqn BAG multiplication}) that $\pi_H$ is a $\KK$-algebra homomorphism and we obtain for every $(H,\Phi)\in\calX(G)$, a $\KK$-algebra homomorphism
\begin{equation*}
  s_{(H,\Phi)}^G:=s_\Phi^H\circ \pi_H \circ \res^G_H\colon B_\KK^A(G)\to B_\KK^A(H)\to \KK H^* \to \KK\,.
\end{equation*}

\begin{theorem}\label{thm mark isomorphism}
The map
\begin{equation}\label{eqn mark isomorphism}
  B_\KK^A(G)\to \left(\prod_{(H,\Phi)\in\calX(G)} \KK\right)^G\,,\quad 
  x\mapsto \bigl(s_{(H,\Phi)}^G(x)\bigr)_{(H,\Phi)}\,,
\end{equation}
is a $\KK$-algebra isomorphism. Here, $G$ acts on $\prod_{(H,\Phi)\in\calX(G)}\KK$ by permuting the coordinates according to the $G$-action on $\calX(G)$. In particular, every $\KK$-algebra homomorphism $B_\KK^A(G)\to \KK$ is of the form $s_{(H,\Phi)}^G$ for some $(H,\Phi)\in\calX(G)$. For $(H,\Phi),(K,\Psi)\in\calX(G)$ one has $s_{(H,\Phi)}^G=s_{(K,\Psi)}^G$ if and only if $(H,\Phi)=_G (K,\Psi)$.
\end{theorem}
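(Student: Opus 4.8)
The plan is to establish that the map in (\ref{eqn mark isomorphism}) is a well-defined $\KK$-algebra homomorphism, that it is bijective by a dimension count combined with injectivity, and finally to pin down when two species coincide. First I would check well-definedness: since each $s_{(H,\Phi)}^G$ is a composite of the ring homomorphisms $\res^G_H$, $\pi_H$ and $s_\Phi^H$, each $s_{(H,\Phi)}^G$ is itself a $\KK$-algebra homomorphism $B_\KK^A(G)\to\KK$. The conjugation compatibility $s_{\lexp{g}{(H,\Phi)}}^G = s_{(H,\Phi)}^G$ follows by unwinding definitions: $\res^G_{\lexp{g}{H}}$ followed by $\isom_{c_{g^{-1}}}$ agrees with $\res^G_H$ up to the conjugation isomorphism on $H^*$, which $\lexp{g}{\Phi}$ undoes. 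Hence the tuple $(s_{(H,\Phi)}^G(x))_{(H,\Phi)}$ lies in the $G$-fixed subring of $\prod_{(H,\Phi)\in\calX(G)}\KK$, and the map is a homomorphism of $\KK$-algebras into that fixed ring.

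Next I would compare dimensions. The target $\bigl(\prod_{(H,\Phi)\in\calX(G)}\KK\bigr)^G$ has $\KK$-dimension equal to the number of $G$-orbits on $\calX(G)$, i.e.\ the number of pairs $(H,\Phi)$ with $H\le G$ and $\Phi\in\Hom(H^*,\KK^\times)$ taken up to $G$-conjugacy. On the other hand, $B_\KK^A(G)$ has $\KK$-basis the orbits $[U,\phi]_G$ with $U\le G$ and $\phi\in U^*$; grouping these by the $G$-orbit of $U$ and then by the orbit of $\phi$ under $N_G(U)$, and using that $\KK$ is a splitting field for each $U^*$ so that $|\Hom(U^*,\KK^\times)| = |U^*|$ and the $N_G(U)$-orbit counts on $U^*$ and on $\Hom(U^*,\KK^\times)$ agree, one finds the two dimensions are equal. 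So it suffices to prove the map is injective.

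For injectivity I would argue triangularity with respect to the poset structure on $\calM(G)$ (equivalently, order subgroups of $G$ by inclusion up to conjugacy and refine). Evaluate $s_{(H,\Phi)}^G$ on the basis element $[U,\phi]_G$: by the restriction formula in \ref{noth A-fibered biset functors}(c), $\res^G_H([U,\phi]_G) = \sum_{g\in[H\backslash G/U]}[H\cap\lexp{g}{U},(\lexp{g}{\phi})|_{H\cap\lexp{g}{U}}]_H$, and $\pi_H$ kills every term except those with $H\cap\lexp{g}{U}=H$, i.e.\ with $H\le\lexp{g}{U}$. Thus $s_{(H,\Phi)}^G([U,\phi]_G)=\sum_{g}\Phi\bigl((\lexp{g}{\phi})|_H\bigr)$, summed over the relevant double cosets, which is $0$ unless $H$ is $G$-subconjugate to $U$. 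Choosing, for each $G$-orbit of basis elements, a representative $(U,\phi)$, the ``diagonal'' value $s_{(U,\Phi)}^G([U,\phi]_G)$ involves the terms with $g\in N_G(U)$ and equals $\sum_{g\in[N_G(U)/U \text{ acting}]}\Phi(\lexp{g}{\phi})$, a nonzero algebraic integer combination of roots of unity—here one uses that distinct characters $\Phi$ separate points, so that after summing over the $\Phi$ in an $N_G(U)$-orbit one still gets something nonzero (this is the first orthogonality relation, as in (\ref{eqn idempotent 1})). Ordering basis elements by the subconjugacy order on their first components makes the matrix $\bigl(s_{(H,\Phi)}^G([U,\phi]_G)\bigr)$ block lower-triangular with invertible diagonal blocks (each diagonal block being, up to the isomorphism (\ref{eqn isom 1}), the mark matrix of $\KK(N_G(U)/U)$-type on $\KK U^*$, hence invertible). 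Therefore the map is injective, hence bijective, and an algebra isomorphism. The last assertions are then immediate: every $\KK$-algebra homomorphism $B_\KK^A(G)\to\KK$ is a coordinate projection of (\ref{eqn mark isomorphism}), hence some $s_{(H,\Phi)}^G$; and $s_{(H,\Phi)}^G = s_{(K,\Psi)}^G$ forces the corresponding coordinates to coincide in the $G$-fixed ring, which happens exactly when $(H,\Phi)=_G(K,\Psi)$.

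The main obstacle I expect is making the triangularity argument fully rigorous in the fibered setting: one must be careful that the ``diagonal block'' attached to a subgroup $U$ really is (via $\pi_U$ and (\ref{eqn isom 1})) the full mark-type matrix for the $N_G(U)$-action on $\KK U^*$, and that no off-diagonal contributions from larger subgroups interfere—this is precisely the point flagged in the introduction, that a fibered biset need not factor through $q(U)$, so restriction terms must be tracked with their characters $\phi$ intact. Granting the poset/triangularity bookkeeping, the rest is the standard mark-homomorphism argument combined with the semisimplicity of $\KK U^*$ guaranteed by the hypotheses on $\KK$.
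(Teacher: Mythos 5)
Your argument is correct in outline but takes a genuinely different route from the paper's. The paper gets Theorem~\ref{thm mark isomorphism} almost for free from the $-_+$-construction: by Remark~\ref{rem + construction}, $B^A_\KK$ is $F_+$ for the Green functor $G\mapsto \KK G^*$, so the mark morphism $m_G$ of (\ref{eqn isom 3}) is a $\KK$-algebra isomorphism onto $\bigl(\prod_{H\le G}\KK H^*\bigr)^G$ by the cited results of \cite{BoltjeRaggiValero2019} (using that $|G|$ is invertible in $\KK$); composing with the $G$-equivariant splitting isomorphisms (\ref{eqn isom 1}) and taking $G$-fixed points then yields (\ref{eqn mark isomorphism}), and the final two assertions are formal consequences. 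You instead give a self-contained ``table of marks'' argument: equivariance to land in the fixed ring, subconjugacy triangularity of the matrix $\bigl(s^G_{(H,\Phi)}([U,\phi]_G)\bigr)$ with invertible diagonal blocks, and an orbit-count comparison. Your route buys independence from \cite{BoltjeRaggiValero2019}; the paper's buys brevity, since the equivariance and inversion bookkeeping is already done there (and is reused later for the idempotent formula).

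Two points in your injectivity step need repair, though both are fixable. First, your claim that the diagonal value $s^G_{(U,\Phi)}([U,\phi]_G)=\sum_{\bar g\in N_G(U)/U}\Phi(\lexp{g}{\phi})$ is a \emph{nonzero} sum of roots of unity is false in general: if $U^*\cong \ZZ/4\ZZ$, $\Phi$ is faithful, $\phi$ generates $U^*$, and $N_G(U)/U$ has order $2$ acting by inversion (e.g.\ $G$ dihedral of order $8$, $U$ its cyclic subgroup of order $4$, $A$ containing an element of order $4$), the sum is $i+(-i)=0$. So the argument must rest, as your next clause indicates, on invertibility of the whole diagonal block rather than of its individual entries: for fixed $U$ the entry equals $|N_\phi|\cdot s^U_\Phi(\sigma_{[\phi]})$, where $\sigma_{[\phi]}$ is the $N_G(U)/U$-orbit sum of $\phi$ and $N_\phi$ its stabilizer, so up to nonzero column scalars the block is the matrix of the restriction of (\ref{eqn isom 1}) to $N_G(U)/U$-fixed points, hence invertible once it is known to be square. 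Second, squareness of these blocks (equivalently, your dimension count) uses that the number of $N_G(U)$-orbits on $U^*$ equals the number on $\Hom(U^*,\KK^\times)$; this is Brauer's permutation lemma (an automorphism of a finite abelian group and its contragredient have equally many fixed points, then apply Burnside's counting lemma), which you assert but should state and justify explicitly. With these two repairs your proof is complete and correct.
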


\begin{proof}
By Theorem~\cite[Theorems~6.1 and 7.3(c)]{BoltjeRaggiValero2019} and using Remark~\ref{rem + construction}, the {\em mark morphism}
\begin{equation}\label{eqn isom 3}
   m_G\colon B_\KK^A(G)\to \left(\prod_{H\le G} \KK H^*\right)^G\,, \quad x\mapsto \bigl(\pi_H(\res^G_H(x))\bigr)_{H\le G}\,,
\end{equation}
is a homomorphism of $\KK$-algebras and by Theorem~\cite[Corollary~6.4]{BoltjeRaggiValero2019} it is an isomorphism, since $|G|$ is invertible in $\KK$. Here, $G$ acts on $\prod_{H\le G} \KK H^*$ by 
$\lexp{g}{((a_H)_{H\le G})}:=(\lexp{g}{a_{g^{-1}Hg}})_{H\le G}$. Using the $\KK$-algebra isomorphisms from (\ref{eqn isom 1}), we obtain a $G$-equivariant $\KK$-algebra isomorphism
\begin{equation*}
   \prod_{H\le G} \KK H^* \to \prod_{(H,\Phi)\in\calX(G)} \KK\,.
\end{equation*}
Applying the functor of $G$-fixed points to this isomorphism and composing it with the isomorphism in (\ref{eqn isom 3}), we obtain the isomorphism in (\ref{eqn mark isomorphism}). The remaining assertions follow immediately.
\end{proof}

Clearly, for each $(H,\Phi)\in\calX(G)$, we obtain a primitive idempotent $\epsilon_{(H,\Phi)}^G$ of the right hand side of the isomorphism (\ref{eqn mark isomorphism}). More precisely, $\epsilon_{(H,\Phi)}^G$ has entries equal to $1$ at indices labelled by the $G$-conjugates of $(H,\Phi)$ and entries equal to $0$ everywhere else. We denote the idempotent of $B_\KK^A(G)$ corresponding to $\epsilon_{(H,\Phi)}^G$ by $e_{(H,\Phi)}^G\in B_\KK^A(G)$. If $(H,\Phi)$ runs through a set of representatives of the $G$-orbits of $\calX(G)$ then $e_{(H,\Phi)}^G$ runs through the set of primitive idempotents of $B_\KK^A(G)$, without repetition. Thus, we have
\begin{equation}\label{eqn s and e}
   s_{(H,\Phi)}^G(e_{(K,\Psi)}^G) = \begin{cases} 1, & \text{if $(H,\Phi)=_G(K,\Psi)$,}\\ 0, & \text{otherwise,} \end{cases}
   \quad\text{and}\quad
   x\cdot e_{(H,\Phi)}^G = s_{(H,\Phi)}^G(x) e_{(H,\Phi)}^G\,,
\end{equation}
for any $(H,\Phi),(K,\Psi)\in\calX(G)$ and any $x\in B_\KK^A(G)$.

\medskip
The following theorem gives an explicit formula for $e_{(H,\Phi)}^G$. A different formula for particular choices of $A$ was given by Barker in \cite[Theorem~5.2]{Barker2004}.
For any $H\le G$ and $a=\sum_{\phi\in H^*} a_\phi \phi\in \KK H^*$ we will use the notation $[H,a]_G:=\sum_{\phi\in H^*}a_\phi [H,\phi]_G\in B_\KK^A(G)$. Moreover, $N_G(H,\Phi)$ denotes the stabilizer of $(H,\Phi)$ under $G$-conjugation.

\begin{theorem}\label{thm idempotent formula}
For $(H,\Phi)\in\calX(G)$ one has
\begin{align}\label{eqn idempotent formula 1}
  e_{(H,\Phi)}^G & = \frac{1}{|N_G(H,\Phi)|} \sum_{K\le H} |K|\mu(K,H) [K,\res^H_K(e_\Phi^H)]_G\\
  \label{eqn idempotent formula 2}
  & = \frac{1}{|N_G(H,\Phi)|} \sum_{\substack{K\le H\\ \Phi|_{K^\perp}=1}} |K|\mu(K,H) [K,\res^H_K(e_\Phi^H)]_G\\
  \label{eqn idempotent formula 3}
  & =\frac{1}{|N_G(H,\Phi)|\cdot|H^*|}
       \sum_{\substack{K\le H\\ \Phi|_{K^\perp}=1}} \sum_{\phi\in H^*} |K|\mu(K,H)\Phi(\phi^{-1})[K,\phi|_K]_G\in B_\KK^A(G)\,,
\end{align}
where $K^\perp:=\{\phi\in H^*\mid \phi|_K=1\}\le H^*$ and $\mu$ is the M\"obius function on the poset of all subgroups of $G$.
\end{theorem}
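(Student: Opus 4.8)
The plan is to compute $e_{(H,\Phi)}^G$ by inverting the mark morphism $m_G$ of (\ref{eqn isom 3}) on the primitive idempotent $\epsilon_{(H,\Phi)}^G$. Concretely, I would first determine an element $f_{(H,\Phi)}^H \in B_\KK^A(H)$ whose image under $m_H$ is supported only at the full subgroup $H$ (with value $e_\Phi^H \in \KK H^*$ there), and then induce it up to $G$ and correct by a rational factor. The natural candidate for the "local" element is obtained by a Möbius inversion over the subgroup lattice below $H$: set $f_{(H,\Phi)}^H := \sum_{K\le H} |K|\mu(K,H)[K,\res^H_K(e_\Phi^H)]_H$. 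The key computation is then $\pi_{H'}(\res^H_{H'}(f_{(H,\Phi)}^H))$ for each $H' \le H$; using the explicit formula for $\res$ from \ref{noth A-fibered biset functors}(c) together with the fact that $\pi_{H'}$ kills every basis element $[W,\psi]_{H'}$ with $W \ne H'$, this reduces to a sum over those $g$ and $K$ with $H' \cap \lexp{g}{K} = H'$, i.e. $H' \le \lexp{g}{K}$, and a standard Möbius-function argument (of the type used in \cite[Section~5]{Bouc2010} for the ordinary Burnside ring) collapses the sum to $0$ unless $H' = H$, in which case it gives $e_\Phi^H$.

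Having established that $m_H(f_{(H,\Phi)}^H) = \epsilon$, where $\epsilon$ is supported at $H$ with value $e_\Phi^H$, I would next compute $m_G(\ind_H^G(f_{(H,\Phi)}^H))$. Since $\ind_H^G([K,\psi]_H) = [K,\psi]_G$ is just "reinterpretation," and since the mark homomorphism is compatible with restriction, this amounts to understanding the $H'$-component of $\res^G_{H'}\ind_H^G$ via the Mackey formula; one finds that $\ind_H^G(f_{(H,\Phi)}^H)$ has marks concentrated on the $G$-conjugates of $(H,\Phi)$, but with value at $(H,\Phi)$ itself equal to $\sum_{g \in N_G(H,\Phi)} \lexp{g}{e_\Phi^H}$ summed appropriately — more precisely, the multiplicity is $|N_G(H,\Phi)|$ (the $G$-stabilizer of the pair, since $\lexp{g}{e_\Phi^H} = e_\Phi^H$ exactly when $\lexp{g}{\Phi} = \Phi$ for $g \in N_G(H)$). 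Dividing by $|N_G(H,\Phi)|$ therefore yields an element whose marks agree with $\epsilon_{(H,\Phi)}^G$, and since $m_G$ is injective (Theorem~\ref{thm mark isomorphism}) this element equals $e_{(H,\Phi)}^G$, giving (\ref{eqn idempotent formula 1}).

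For the passage from (\ref{eqn idempotent formula 1}) to (\ref{eqn idempotent formula 2}) I would argue that $\res^H_K(e_\Phi^H) = 0$ in $\KK K^*$ whenever $\Phi|_{K^\perp} \ne 1$: indeed $\res^H_K$ restricted to $\KK H^*$ is induced by the restriction map $H^* \to K^*$, whose kernel is exactly $K^\perp$, so on idempotents $e_\Phi^H \mapsto 0$ unless $\Phi$ factors through $H^*/K^\perp$, i.e. unless $\Phi|_{K^\perp}=1$; this lets one drop the vanishing terms from the sum. Finally, (\ref{eqn idempotent formula 3}) is just the result of expanding $e_\Phi^H = \frac{1}{|H^*|}\sum_{\phi\in H^*}\Phi(\phi^{-1})\phi$ from (\ref{eqn idempotent 1}) inside $[K,\res^H_K(e_\Phi^H)]_G$ and using $\res^H_K(\phi) = \phi|_K$, together with the notational convention $[K,a]_G$.

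The main obstacle I anticipate is the second step: bookkeeping the Mackey-type double-coset sum for $\res^G_{H'}\ind_H^G$ on the non-basis-respecting element $f_{(H,\Phi)}^H$ and verifying that, after the Möbius collapse, the only surviving contribution at the pair $(H,\Phi)$ has total coefficient exactly $|N_G(H,\Phi)|$ rather than $|N_G(H)|$ or $|H|$ — one must track carefully how conjugation acts on the $\KK H^*$-valued mark and confirm that the stabilizer that appears is the stabilizer of the \emph{pair}. The Möbius-inversion lemma itself is routine and essentially identical to the untwisted case in \cite{Bouc2010}; it is the interaction of this inversion with the $G^*$-twisting data $\Phi$ that requires the extra care flagged in the introduction.
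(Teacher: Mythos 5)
Your overall strategy is sound and lands on the correct constants, but it is a genuinely different execution from the paper's. The paper does not verify the formula forward: it invokes the general inversion formula for the mark morphism $m_G$ from \cite[Proposition~6.3]{BoltjeRaggiValero2019}, plugs in the mark vector of $e_{(H,\Phi)}^G$ (whose $H$-component is $\sum_{x\in[N_G(H)/N_G(H,\Phi)]}\lexp{x}{e_\Phi^H}$, zero at subgroups not conjugate to $H$), and then collapses the conjugation sums to obtain (\ref{eqn idempotent formula 1}) in a few lines, with no Mackey bookkeeping at all. You instead build the candidate element, compute its marks by hand (M\"obius collapse at the level of $H$, then Mackey for $\res^G_{H'}\ind_H^G$), and conclude by injectivity of $m_G$ from Theorem~\ref{thm mark isomorphism}. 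What the paper's route buys is brevity and the automatic appearance of the stabilizer of the pair through the description of the mark vector; what your route buys is a self-contained argument that only needs injectivity of the mark morphism plus the classical Burnside-ring M\"obius identity, and it makes explicit where $N_G(H,\Phi)$ (rather than $N_G(H)$) enters. For (\ref{eqn idempotent formula 2}) your argument — the algebra map $\KK H^*\to\KK K^*$ induced by $\res\colon H^*\to K^*$ kills $e_\Phi^H$ unless $\Phi$ is trivial on the kernel $K^\perp$ — is the same orthogonality fact the paper proves by an explicit root-of-unity sum (this is where the splitting-field hypothesis is used), and (\ref{eqn idempotent formula 3}) is immediate in both treatments.

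One normalization in your sketch is off and should be fixed, although it self-corrects: the mark vector of $f_{(H,\Phi)}^H:=\sum_{K\le H}|K|\mu(K,H)[K,\res^H_K(e_\Phi^H)]_H$ is supported at $H$ with value $|H|\,e_\Phi^H$, not $e_\Phi^H$ — already the single term $K=H$ contributes $|H|\,e_\Phi^H$ to $\pi_H$, and the M\"obius identity $\sum_{K\le H}|K|\mu(K,H)\,|(H/K)^{H'}|=|H|\,\delta_{H',H}$ confirms it. With your stated value $e_\Phi^H$, the Mackey computation (one term per double coset, i.e.\ per coset $gH$ with $g\in N_G(H)$) would give multiplicity $[N_G(H,\Phi):H]$ at $(H,\Phi)$, contradicting your own final claim of $|N_G(H,\Phi)|$; with the correct value $|H|\,e_\Phi^H$ you get $|H|\cdot[N_G(H,\Phi):H]=|N_G(H,\Phi)|$, which is exactly the scalar needed so that dividing by $|N_G(H,\Phi)|$ reproduces $e_{(H,\Phi)}^G$. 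So the plan goes through once this factor of $|H|$ is tracked consistently.
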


\begin{proof}
We use the inversion formula of the $\KK$-algebra isomorphism (\ref{eqn isom 3}) from \cite[Proposition~6.3]{BoltjeRaggiValero2019} and obtain
\begin{equation}\label{eqn idempotent formula 4}
   e_{(H,\Phi)}^G=\frac{1}{|G|}\sum_{L\le K\le G} |L|\mu(L,K) [L,\res^K_L(a_K)]_G\,,
\end{equation}
with $a_K\in \KK K^*$, $K\le G$, given by $a_H=\sum_{x\in [N_G(H)/N_G(H,\Phi)]} \lexp{x}{e^H_{\Phi}}$, $a_{\lexp{g}{H}}=\lexp{g}{a_H}$, for any $g\in G$, and $a_K=0$ for all $K$ not $G$-conjugate to $H$. Thus, in the above sum, for $K$ it suffices to consider only subgroups that are $G$-conjugate to $H$. We obtain
\begin{equation}\label{idempotent formula 5}
   e_{(H,\Phi)}^G=\frac{1}{|G|}
   \sum_{x\in [G/N_G(H)]}\sum_{L\le \lexp{x}{H}} |L|\mu(L,\lexp{x}{H}) [L,\res^{\lexp{x}{H}}_L(\lexp{x}{a_H})]_G\,.
\end{equation}
Replacing $L$ with $\lexp{x}{K}$, for $K\le H$, we see that the sum over $L$ is independent of $x$ and we obtain
\begin{equation}\label{idempotent formula 6}
   e_{(H,\Phi)}^G=\frac{[G:N_G(H)]}{|G|}
   \sum_{K\le H} |K|\mu(K,H) [K,\res^H_K(a_H)]_G\,.
\end{equation}
Substituting $a_H=\sum_{g\in [N_G(H)/N_G(H,\Phi)]}\lexp{g}{e_\Phi^H}$ and using the same argument as above, we obtain
the formula in (\ref{eqn idempotent formula 1}). In order to prove the formula in (\ref{eqn idempotent formula 2})  it suffices to show that $\res^H_K(e_\Phi^H)=0$ if $\Phi|_{K^\perp}\neq 1$. Substituting the formula (\ref{eqn idempotent 1}) for $e_\Phi^H$, we obtain
\begin{equation}\label{eqn rest formula}
   \res^H_K(e_\Phi^H)=\frac{1}{|H^*|}\sum_{\phi\in H^*} \Phi(\phi^{-1}) \phi|_K\,.
\end{equation}
Note that $K^\perp=\ker(\res^H_K\colon H^*\to K^*)$ and choose for every $\psi\in\im(\res^H_K)\le K^*$ an element $\psitilde\in H^*$ with $\psitilde|_K=\psi$. Then the right hand side in (\ref{eqn rest formula}) is equal to
\begin{equation*}
   \frac{1}{|H^*|}\sum_{\psi\in\im(\res^H_K)}\sum_{\lambda\in K^\perp} \Phi(\psitilde^{-1}\lambda^{-1})\psi = 
   \frac{1}{|H^*|}\sum_{\psi\in\im(\res^H_K)}\Phi(\psitilde^{-1})\bigl(\sum_{\lambda\in K^\perp} \Phi(\lambda^{-1})\bigr) \psi\,,
 \end{equation*}
and it suffices to show that $\sum_{\lambda\in K^\perp} \Phi|_{K^\perp}(\lambda^{-1})=0$. But 
\begin{equation}\label{eqn rest formula 2}
   \sum_{\lambda\in K^\perp} \Phi|_{K^\perp}(\lambda^{-1}) = 
   [K^\perp:K^\perp\cap \ker(\Phi)]\sum_{\lambdabar\in K^\perp/(K^\perp\cap \ker(\Phi))} 
   \overline{\Phi|_{K^\perp}}(\lambdabar^{-1})\,,
\end{equation}
with an injective homomorphism from $\overline{\Phi|_{K^\perp}}\colon K^\perp/(K^\perp\cap \ker(\Phi))\to \KK^\times$. It follows that $K^\perp/(K^\perp\cap\ker(\Phi))$ is cyclic, say of order $n$. Our assumption on $\KK$ implies that $\KK$ has a primitive $n$-th root of unity. Moreover, since $\Phi|_{K^\perp}\neq 1$, we have $n>1$. Thus the sum on the right hand side of (\ref{eqn rest formula 2})  is equal to the sum of all $n$-th roots of unity in $\KK$, which is $0$. This proves Equation~(\ref{eqn idempotent formula 2}). Formula (\ref{eqn idempotent formula 3}) is now immediate after substituting the formula for $e_\Phi^H$.
\end{proof}

\begin{Remark}\label{rem comparing B and BA}
If $A'$ is the trivial subgroup of $A$ we have $B_\KK^{A'}(G) = B_\KK(G)$, the Burnside algebra over $\KK$. Using the functoriality properties in \ref{noth A-fibered biset functors}(c), we obtain a commutative diagram
\begin{diagram}[85]
   B_\KK(G) & \Ear{m_G} & \movevertex(0,-8){(\mathop{\prod}\limits_{H\le G} \KK)^G} &\movearrow(5,0){ \Ear[40]{\id} }& 
        \movevertex(10,-8){(\mathop{\prod}\limits_{H\le G} \KK)^G} &&
   \sar & & \sar & & \movearrow(10,0){\sar} &&
   B_\KK^A(G) & \Ear{m_G} & \movevertex(0,-8){(\mathop{\prod}\limits_{H\le G} \KK H^*)^G} & \ear[20] & 
       \movevertex(10,-8){(\mathop{\prod}\limits_{(H,\Phi)\in\calX(G)} \KK)^G} &&
\end{diagram}
of $\KK$-algebra homomorphisms, where the left horizontal maps are the mark isomorphisms $m_G$ from (\ref{eqn isom 3}) given by $(\pi_H\circ\res^G_H)$, the right top horizontal map is the identity, the right bottom horizontal map is the product of the isomorphisms from (\ref{eqn isom 1}), the middle vertical map is the product of the unique $\KK$-algebra homomorphisms $\KK\to\KK H^*$, and the right vertical map is induced by the $G$-equivariant map $\calX(G)\to \{H\le G\}$, $(H,\Phi)\mapsto H$, between the indexing sets. We denote the primitive idempotents of $B_\KK(G)$ by $e_H^G$, for any $H\le G$. Thus, by the above commutative diagram,
\begin{equation}\label{eqn B and BA idempotents}
   e_G^G=\sum_{\Phi\in\Hom(G^*,\KK^\times)} e_{(G,\Phi)}^G\,.
\end{equation}
\end{Remark}

\begin{lemma}\label{lem pi and e}
For any $x\in B_\KK^A(G)$ one has $e_G^G\cdot x=0$ if and only if $\pi_G(x)=0$.
\end{lemma}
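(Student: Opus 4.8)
The plan is to exploit the mark isomorphism $m_G$ of Theorem~\ref{thm mark isomorphism} to translate the statement into a statement about coordinates. Recall that $e_G^G = \sum_{\Phi\in\Hom(G^*,\KK^\times)} e_{(G,\Phi)}^G$ by Equation~(\ref{eqn B and BA idempotents}), and by (\ref{eqn s and e}) the idempotent $e_{(G,\Phi)}^G$ is detected precisely by the species $s_{(G,\Phi)}^G$ among all the species $s_{(H,\Psi)}^G$. Hence $e_G^G$ is the sum of exactly those primitive idempotents $e_{(H,\Psi)}^G$ with $H = G$, and so for $x\in B_\KK^A(G)$ one has, again using (\ref{eqn s and e}),
\begin{equation*}
  e_G^G\cdot x = \sum_{\Phi} s_{(G,\Phi)}^G(x)\, e_{(G,\Phi)}^G\,.
\end{equation*}
Since the $e_{(G,\Phi)}^G$ are $\KK$-linearly independent (they are distinct primitive idempotents of $B_\KK^A(G)$, as $\Phi$ runs over $\Hom(G^*,\KK^\times)$, a set on which $G$ acts trivially since $G$ centralizes $G^\ast$ componentwise — more precisely each $(G,\Phi)$ is its own $G$-orbit), we get $e_G^G\cdot x = 0$ if and only if $s_{(G,\Phi)}^G(x) = 0$ for every $\Phi\in\Hom(G^*,\KK^\times)$.

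The second step is to identify $\pi_G(x)$ in these same terms. By definition $s_{(G,\Phi)}^G = s_\Phi^G\circ\pi_G\circ\res^G_G = s_\Phi^G\circ\pi_G$, so $s_{(G,\Phi)}^G(x) = s_\Phi^G(\pi_G(x))$ for each $\Phi$. Now invoke the algebra isomorphism (\ref{eqn isom 1}): the map $\KK G^*\to\prod_{\Phi}\KK$, $a\mapsto(s_\Phi^G(a))_\Phi$, is injective. Therefore $s_\Phi^G(\pi_G(x)) = 0$ for all $\Phi$ if and only if $\pi_G(x) = 0$. Chaining the two equivalences gives $e_G^G\cdot x = 0 \iff \pi_G(x) = 0$, as desired.

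The only point requiring a little care is the linear independence of the $e_{(G,\Phi)}^G$ and the observation that distinct $\Phi$ give distinct $G$-orbits in $\calX(G)$; both follow because $G$ acts trivially on the second coordinate of a pair $(G,\Phi)$ — conjugation by $g\in G$ fixes $G$ and sends $\Phi$ to $\lexp{g}{\Phi}$ with $\lexp{g}{\Phi}(\phi)=\Phi(\lexp{g^{-1}}{\phi})$, and since $\phi\in G^*$ is a homomorphism $G\to A$ it satisfies $\lexp{g^{-1}}{\phi}=\phi$, whence $\lexp{g}{\Phi}=\Phi$. I expect no real obstacle here; the argument is a direct bookkeeping with the mark isomorphism, and in fact one need not even decompose $e_G^G$ explicitly: it suffices that $e_G^G$ is, under $m_G$, the idempotent supported on exactly the coordinates indexed by pairs with first component $G$, which is visible from the commutative diagram in Remark~\ref{rem comparing B and BA}.
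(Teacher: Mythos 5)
Your argument is correct and is essentially the paper's: both reduce the statement to bookkeeping with the mark/species isomorphism, you via the decomposition $e_G^G=\sum_\Phi e_{(G,\Phi)}^G$, the orthogonality relations (\ref{eqn s and e}), and injectivity of (\ref{eqn isom 1}), the paper by noting directly that $m_G$ is injective and multiplicative and that $m_G(e_G^G)$ is supported in the $G$-component, whose entry for $m_G(x)$ is exactly $\pi_G(x)$. The shortcut you mention in your final sentence (no decomposition of $e_G^G$ needed) is precisely the proof given in the paper, so the two arguments differ only in that yours passes one step further to the $\KK$-valued species.
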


\begin{proof}
Since $m_G\colon B^A_\KK(G)\to \prod_{H\le G}\KK$ is injective and multiplicative, one has $e_G^G\cdot x=0$ if and only if $m_G(e_G^G)\cdot m_G(x)=0$. But $m_G(e_G^G)$ has entry $1$ in the $G$-component and entry $0$ everywhere else. Thus, $e_G^G\cdot x=0$ if and only if the entry of $m_G(x)$ in the $G$-component is equal to $0$. But this entry equals $\pi_G(x)$.
\end{proof}

%%%%%%%%%%%%%%%%%%%% SECTION 4 %%%%%%%%%%%%%%%%%%%%%%%%%%%%%%%%%%%%%

\section{Elementary operations on primitive idempotents}\label{sec elementary operations on primitive idempotents}

Throughout this section we assume as in Section~\ref{sec primitive idempotents} that $G$ is a finite group such that $S^*=\Hom(S,A)$ is finite for all subquotients $S$ of $G$, and that $\KK$ is a field of characteristic $0$ which is a splitting field of $S^*$ for all subquotients $S$ of $G$. 

In this section we will establish formulas for elementary fibered biset operations on the primitive idempotents of $B^A_\KK(S)$ for subquotients $S$ of $G$. These formulas will be used in later sections.

\begin{proposition}\label{prop s comp res}
Let $H\le G$.

\smallskip
{\rm (a)} For $(L,\Psi)\in\calX(H)$ one has $s_{(L,\Psi)}^H\circ \res^G_H = s_{(L,\Psi)}^G\colon B^A_\KK(G)\to \KK$.

\smallskip
{\rm (b)} For $(K,\Phi)\in\calX(G)$ one has
\begin{equation*}
  \res^G_H(e_{(K,\Phi)}^G) = \sum_{\substack{(L,\Psi)\in[H\backslash \calX(H)] \\ (L,\Psi)=_G (K,\Phi)}} e_{(L,\Psi)}^H\,.
\end{equation*}

\smallskip
{\rm (c)} For $\Phi\in \Hom(G^*,\KK^\times)$ and $H<G$ one has $\res^G_H(e_{(G,\Phi)}^G)=0$.
\end{proposition}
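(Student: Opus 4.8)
The plan is to prove the three parts in order, since each builds on the previous one. For part (a), the key observation is that both sides are $\KK$-algebra homomorphisms $B^A_\KK(G)\to\KK$, so by the last assertion of Theorem~\ref{thm mark isomorphism} it suffices to identify them as the same species, i.e.\ to show they agree. I would simply unwind the definitions: $s_{(L,\Psi)}^G = s_\Psi^L\circ\pi_L\circ\res^G_L$ and $s_{(L,\Psi)}^H = s_\Psi^L\circ\pi_L\circ\res^H_L$, so the claim reduces to $\res^H_L\circ\res^G_H = \res^G_L$, which is the standard transitivity of restriction in the fibered biset category (it follows from the composition formula, or is listed among the relations in \cite[1.1.3]{Bouc2010}). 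This step should be routine.

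For part (b), the strategy is to compute both sides by applying all the species $s_{(L,\Psi)}^H$, for $(L,\Psi)\in\calX(H)$, and invoking the mark isomorphism of Theorem~\ref{thm mark isomorphism} for the group $H$: two elements of $B^A_\KK(H)$ are equal iff all species agree on them. On the left, $s_{(L,\Psi)}^H(\res^G_H(e_{(K,\Phi)}^G)) = s_{(L,\Psi)}^G(e_{(K,\Phi)}^G)$ by part (a), and this equals $1$ if $(L,\Psi)=_G(K,\Phi)$ and $0$ otherwise, by the first equation in (\ref{eqn s and e}). On the right, $s_{(L,\Psi)}^H$ applied to the sum $\sum e_{(L',\Psi')}^H$ (over $(L',\Psi')\in[H\backslash\calX(H)]$ with $(L',\Psi')=_G(K,\Phi)$) picks out the term with $(L',\Psi')=_H(L,\Psi)$, again using (\ref{eqn s and e}) over $H$; it contributes $1$ exactly when the $H$-orbit of $(L,\Psi)$ appears in the index set, i.e.\ exactly when $(L,\Psi)=_G(K,\Phi)$. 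Hence both sides have the same image under every species, so they are equal. The only mild subtlety is bookkeeping about $H$-orbits versus $G$-orbits inside $\calX(H)$, but this is exactly what the condition $(L,\Psi)=_G(K,\Phi)$ encodes and causes no trouble.

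Part (c) is then immediate: take $K=G$ and $\Phi\in\Hom(G^*,\KK^\times)$ in part (b). The index set consists of $(L,\Psi)\in[H\backslash\calX(H)]$ with $(L,\Psi)=_G(G,\Phi)$; but if $(L,\Psi)$ is $G$-conjugate to $(G,\Phi)$ then $L$ is $G$-conjugate to $G$, forcing $L=G$, which is impossible since $L\le H<G$. So the index set is empty and $\res^G_H(e_{(G,\Phi)}^G)=0$. I do not anticipate a genuine obstacle here; the one place to be careful is making sure the appeal to Theorem~\ref{thm mark isomorphism} is applied over the correct group ($H$ in part (b)), and that part (a) is invoked to transport species from $G$ to $H$. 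An alternative, more hands-on route for (c) would be to use the explicit restriction formula from \ref{noth A-fibered biset functors}(c) together with the idempotent formula (\ref{eqn idempotent formula 1}), but the species argument is cleaner and avoids Möbius-function manipulations.
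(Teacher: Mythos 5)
Your proposal is correct, and parts (b) and (c) are essentially the paper's own argument: the paper also notes that $\res^G_H$ is a $\KK$-algebra homomorphism, so $\res^G_H(e_{(K,\Phi)}^G)$ is a sum of primitive idempotents $e_{(L,\Psi)}^H$, and decides which ones occur by applying the species $s_{(L,\Psi)}^H$ and invoking part (a) together with (\ref{eqn s and e}); your species-by-species comparison plus injectivity of the mark isomorphism over $H$ is the same reasoning in slightly more explicit form, and (c) is read off from (b) exactly as you do. Where you genuinely diverge is part (a): the paper proves it by citing the commutative square from \cite[Equation~(13) and Theorem~6.1]{BoltjeRaggiValero2019}, i.e.\ the compatibility of the mark morphisms $m_G$, $m_H$ with restriction coming from the $-_+$-construction viewpoint of Remark~\ref{rem + construction}, whereas you unwind the definition $s_{(L,\Psi)}^G=s_\Psi^L\circ\pi_L\circ\res^G_L$ and reduce the claim to transitivity of restriction, $\res^H_L\circ\res^G_H=\res^G_L$. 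That is a more elementary and self-contained route; the only point to make explicit is why this biset relation holds for the fibered operations, which is immediate either from the embedding $\calC_\KK\subseteq\calC^A_\KK$ of \ref{noth A-fibered biset functors}(c) (the elementary morphisms lie in the image of a functor, so relations among them transfer) or by a short Mackey-type computation with the explicit restriction formula. The paper's route buys uniformity with the proofs of Proposition~\ref{prop s comp inf} and the later use of the mark morphism; yours buys independence from the machinery of \cite{BoltjeRaggiValero2019}. Your opening remark in (a) that it suffices to compare species via Theorem~\ref{thm mark isomorphism} is superfluous (you prove the identity directly), but harmless.
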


\begin{proof}
(a) We use the point of view from Remark~\ref{rem + construction}. By \cite[Equation~(13) and Theorem~6.1]{BoltjeRaggiValero2019} the left square in
\begin{diagram}[85]
  B_\KK^A(G) & \Ear{m_G} & \movevertex(0,-8){\mathop{\prod}\limits_{K\le G} \KK K^*} & \ear & 
  \movevertex(0,-8){\mathop{\prod}\limits_{(K, \Phi)\in\calX(G)} \KK} &&
  \Sar{\res^G_H} & & \sar & & \sar &&
  B_\KK^A(H) & \Ear{m_H} & \movevertex(0,-8){\mathop{\prod}\limits_{L\le H} \KK L^*} & \ear & 
  \movevertex(0,-8){\mathop{\prod}\limits_{(L,\Psi)\in\calX(H)} \KK} &&
\end{diagram}
is commutative, where the left horizontal maps are the mark homomorphisms from (\ref{eqn isom 3}), the right horizontal maps are given by the isomorphisms in (\ref{eqn isom 1}), and the middle and right  vertical maps are the canonical projections. Since the right hand square commutes as well, following up with the projection onto the $(L,\Psi)$-component of $\prod_{(L,\Psi)\in \calX(H)}\KK$, yields the result.

\smallskip
(b) Since $\res^G_H\colon B_\KK^A(G)\to B_\KK^A(H) $ is a $\KK$-algebra homomorphism, $\res^G_H(e_{(K,\Phi)})$ is the sum of certain primitive idempotents $e_{(L,\Psi)}^H$, $(L,\Psi)\in[H\backslash \calX(H)]$. Moreover, $e_{(L,\Psi)}^H$ occurs in this sum if and only if $s_{(L,\Psi)}^H(\res^G_H(e_{(K,\Phi)}^G))\neq 0$. The result follows now immediately from (a).

\smallskip
(c) This follows immediately from Part~(b).
\end{proof}

\begin{proposition}\label{prop s comp inf}
Let $N\trianglelefteq G$.

\smallskip
{\rm (a)} For $(H,\Phi)\in\calX(G)$ one has $s_{(H,\Phi)}^G\circ \infl_{G/N}^G = s_{(HN/N,\Phi_N)}^{G/N}$, where $\Phi_N:=\Phi\circ\nu^*\circ \alpha^*\in\Hom((HN/N)^*,\KK^\times)$ with $\alpha\colon H/(H\cap N)\myiso HN/N$ denoting the canonical isomorphism and $\nu\colon H\to H/(H\cap N)$ denoting the canonical epimorphism.

\smallskip
{\rm (b)} For $(U/N,\Psi)\in \calX(G/N)$ with $N\le U\le G$, one has
\begin{equation*}
   \infl_{G/N}^G(e_{(U/N,\Psi)}^{G/N}) = \sum_{\substack{(H,\Phi)\in[G\backslash \calX(G)] \\ (HN/N,\Phi_N) =_{G/N} (U/N,\Psi)}}
   e_{(H,\Phi)}^G\,.
\end{equation*}
\end{proposition}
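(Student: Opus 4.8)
The plan is to mimic exactly the proof strategy of Proposition~\ref{prop s comp res}, replacing restriction by inflation throughout. Part~(a) is the computational heart: I must show that the composite $\KK$-algebra homomorphism $s_{(H,\Phi)}^G\circ\infl_{G/N}^G\colon B_\KK^A(G/N)\to\KK$ coincides with a single mark homomorphism of $B_\KK^A(G/N)$, and identify which one. By Theorem~\ref{thm mark isomorphism} the composite, being a $\KK$-algebra homomorphism $B_\KK^A(G/N)\to\KK$, must equal $s_{(V/N,\Psi)}^{G/N}$ for some $(V/N,\Psi)\in\calX(G/N)$; so the only real content is to pin down the pair. To do this I would evaluate the composite on a standard basis element $[U/N,\psi]_{G/N}$ of $B_\KK^A(G/N)$ using the explicit formula $\infl_{G/N}^G([U/N,\psi]_{G/N})=[U,\psi\circ\nu_U]_G$ from \ref{noth A-fibered biset functors}(c) (with $\nu_U\colon U\to U/N$ the natural epimorphism), then apply $\res^G_H$, then $\pi_H$, then $s_\Phi^H$. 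Tracking which terms survive $\pi_H\circ\res^G_H$ — namely those $(H\cap\lexp{g}{U},\ldots)$ with $H\cap\lexp{g}{U}=H$, i.e. $H\le\lexp{g}{U}$ — and chasing the restricted characters through, the output is a sum over such double coset representatives of $s_\Phi^H$ evaluated on characters of the form $(\lexp{g}{(\psi\circ\nu_U)})|_H$. Comparing this with the expansion of $s_{(HN/N,\Phi_N)}^{G/N}([U/N,\psi]_{G/N})$ — which involves the double cosets $[HN/N\backslash G/N\,/\,U/N]$ and restrictions of $\lexp{\bar g}{\psi}$ to $HN/N$ — and using the canonical identifications $\nu\colon H/(H\cap N)\myiso$ its image and $\alpha\colon H/(H\cap N)\myiso HN/N$, the two sums match term by term, which forces $(V/N,\Psi)=(HN/N,\Phi_N)$ with $\Phi_N=\Phi\circ\nu^*\circ\alpha^*$ as claimed.

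An alternative, and probably cleaner, route to (a) is to use Remark~\ref{rem + construction} and the compatibility of the mark morphism $m_G$ with inflation established in \cite{BoltjeRaggiValero2019}. One has a commutative square with horizontal arrows $m_{G/N}$ and $m_G$, left vertical arrow $\infl_{G/N}^G$, and right vertical arrow the map $\prod_{K/N\le G/N}\KK(K/N)^*\to\prod_{L\le G}\KK L^*$ induced on each factor by the appropriate inflation on the fiber Green functor $G\mapsto\KK G^*$ — which, by the description of deflation/inflation there, sends the $(K/N)$-component into the $K$-component via $\KK(K/N)^*\to\KK K^*$ composed with the projection, and is zero on $L$ not of the form $KN/N$ with the relevant normality. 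Post-composing with the isomorphisms (\ref{eqn isom 1}) identifies the right vertical arrow, on the level of the $\calX$-indexed products, as the map dual to $(H,\Phi)\mapsto(HN/N,\Phi_N)$; projecting onto the $(H,\Phi)$-component then yields (a) directly. I would present whichever of these two is shorter to write; the commutative-diagram version keeps the bookkeeping minimal.

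Part~(b) is then formal, exactly as Proposition~\ref{prop s comp res}(b) follows from (a): since $\infl_{G/N}^G$ is a $\KK$-algebra homomorphism, $\infl_{G/N}^G(e_{(U/N,\Psi)}^{G/N})$ is a sum of primitive idempotents $e_{(H,\Phi)}^G$ with $(H,\Phi)$ ranging over a set of representatives $[G\backslash\calX(G)]$, and $e_{(H,\Phi)}^G$ appears in this sum precisely when $s_{(H,\Phi)}^G(\infl_{G/N}^G(e_{(U/N,\Psi)}^{G/N}))\neq0$. By (a) and (\ref{eqn s and e}) this value is $s_{(HN/N,\Phi_N)}^{G/N}(e_{(U/N,\Psi)}^{G/N})$, which is $1$ if $(HN/N,\Phi_N)=_{G/N}(U/N,\Psi)$ and $0$ otherwise. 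This gives exactly the stated index set. (One should note in passing that $\Phi_N$ is $G$-conjugation equivariant in the sense needed, so the condition $(HN/N,\Phi_N)=_{G/N}(U/N,\Psi)$ is well defined on $G$-orbits of $(H,\Phi)$; this is immediate from naturality of $\nu$ and $\alpha$ under conjugation.)

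The main obstacle is the character bookkeeping in (a): one must be careful that the pullback $\Phi\circ\nu^*\circ\alpha^*$ lands in $\Hom((HN/N)^*,\KK^\times)$ and not merely in $\Hom((H/(H\cap N))^*,\KK^\times)$, and that the double-coset index sets $[H\backslash G/U]$ and $[HN/N\backslash G/N\,/\,U/N]$ are matched correctly — a priori the natural map between them need not be injective, but the surviving terms (those with $H\le\lexp{g}{U}$, equivalently $HN/N\le\lexp{\bar g}{(U/N)}$) do correspond bijectively because $N\le U$ forces $\lexp{g}{U}\supseteq N$, so $H\le\lexp{g}{U}\iff HN\le\lexp{g}{U}\iff HN/N\le\lexp{\bar g}{(U/N)}$. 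Once this identification is in place the rest is routine. Everything else is a direct transcription of the proof of Proposition~\ref{prop s comp res}.
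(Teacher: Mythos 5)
Your proposal is correct, and your second (commutative-diagram) route for part~(a) is exactly the paper's proof: it invokes Remark~\ref{rem + construction} and the compatibility of the mark morphism $m_G$ with inflation from \cite{BoltjeRaggiValero2019}, identifies the right-hand vertical map on the $\calX$-indexed products as $(a_{(U/N,\Psi)})\mapsto(a_{(HN/N,\Phi_N)})$, and projects onto the $(H,\Phi)$-component, while your part~(b) is verbatim the paper's argument. Your first route (direct evaluation on standard basis elements with the double-coset matching $[H\backslash G/U]\leftrightarrow[HN/N\backslash(G/N)/(U/N)]$ for $N\le U$) is a sound alternative, and the bookkeeping points you flag are precisely the ones that need checking, but it is not the route the paper takes.
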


\begin{proof}
(a) We use again the point of view from Remark~\ref{rem + construction}.
By \cite[Equation~(12)]{BoltjeRaggiValero2019} applied to $D:=\{(g,gN)\mid g\in G\}\le G\times G/N$ and \cite[Theorem~6.1]{BoltjeRaggiValero2019} the left square in
\begin{diagram}[85]
  \movevertex(-20,0){B_\KK^A(G/N)} &\movearrow(-17,0){\Ear[33]{m_{G/N}}} & 
      \movevertex(0,-8){\mathop{\prod}\limits_{N\le U\le G} \KK (U/N)^*} & 
      \movearrow(20,0){\ear[20]} & \movevertex(30,-8){\mathop{\prod}\limits_{(U/N, \Psi)\in\calX(G/N)} \KK} &&
  \movearrow(-20,0){\Sar{\infl^G_{G/N}} }& & \sar & & \movearrow(30,0){\sar} &&
  \movevertex(-20,0){B_\KK^A(G)} & \movearrow(-15,0){\Ear[40]{m_G}} & 
      \movevertex(0,-8){\mathop{\prod}\limits_{H\le G} \KK H^*} & 
      \movearrow(15,0){\ear[40]}& \movevertex(30,-8){\mathop{\prod}\limits_{(H,\Phi)\in\calX(G)} \KK} &&
\end{diagram}
is commutative, where the left horizontal maps are the mark homomorphisms from (\ref{eqn isom 3}), the right horizontal maps are given by the isomorphisms in (\ref{eqn isom 1}), the middle vertical homomorphism maps the family $(a_{U/N})_{N\le U\le G}$ to $(\infl_{H/(H\cap N)}^H(\alpha^*(a_{HN/N})))_{H\le G}$ with $\alpha\colon H/(H\cap N)\myiso HN/N$ denoting the canonical isomorphism, and the right vertical homomorphism maps the family $(a_{(U/N,\Psi)})_{(U/N,\Psi)\in\calX(G/N)}$ to the family $(a_{(HN/N,\Phi_N)})_{(H,\Phi)\in\calX(G)}$.
Since the right hand square commutes as well (note that $\inf_{H/(H\cap N)}^H\colon \KK (H/(H\cap N))^*\to \KK H^*$ is the $\KK$-linear extension of $\nu^*$ from (a)), following up with the projection onto the $(H,\Phi)$-component of $\prod_{(H,\Phi)\in \calX(G)}\KK$, yields the result.

\smallskip
(b) Since $\infl_{G/N}^G\colon B_\KK^A(G/N)\to B_\KK^A(G)$ is a $\KK$-algebra homomorphism, $\infl_{G/N}^G(e_{(U/N,\Psi)}^{G/N})$ is the sum of certain primitive idempotents $e_{(H,\Phi)}^G$, $(H,\Phi)\in [G\backslash \calX(G)]$. Moreover, $e_{(H,\Phi)}^G$ occurs in this sum if and only if $s_{(H,\Phi)}^G(\inf_{G/N}^G(e_{(U/N,\Psi)}^{G/N}))\neq 0$. Part~(a) now implies the result. 
\end{proof}

\begin{proposition}\label{prop m number}
Let $N\trianglelefteq G$.

\smallskip
{\rm (a)} For all $(H,\Phi)\in\calX(G)$, there exists $m_{(H,\Phi)}^{G,N}\in \KK$ such that
\begin{equation}\label{eqn m equation}
  \defl^G_{G/N}(e_{(H,\Phi)}^G) = m_{(H,\Phi)}^{G,N}\cdot e_{(HN/N,\Phi_N)}^G
\end{equation}
with $\Phi_N$ defined as in Proposition~\ref{prop s comp inf}(a).

\smallskip
{\rm (b)} For all $\Phi\in G^*$ one has
\begin{equation}\label{eqn m formula}
   m_{(G,\Phi)}^N:= m_{(G,\Phi)}^{G,N} = \frac{|(G/N)^*|}{|G|\cdot|G^*|} 
   \sum_{\substack{ K\le G \\ KN=G \\ \Phi|_{K^\perp}=1}} |K|\cdot|K^\perp|\cdot\mu(K,G)\in \QQ\,.
\end{equation}
\end{proposition}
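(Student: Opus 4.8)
The plan is to deduce (a) from the characterization of the primitive idempotents together with the Frobenius identity for deflation
\[
  \defl^G_{G/N}\bigl(\infl^G_{G/N}(y)\cdot x\bigr) = y\cdot\defl^G_{G/N}(x)\qquad (y\in B^A_\KK(G/N),\ x\in B^A_\KK(G)),
\]
which holds because $B^A_\KK$ is a Green biset functor (it can also be checked directly from \eqref{eqn BAG multiplication} and the formula for $\defl^G_{G/N}$ on standard basis elements: since $N\le U$ whenever $\infl^G_{G/N}$ is applied to a basis element $[U/N,\cdot]_{G/N}$, the deflatability conditions arising on the two sides reduce to the same one). Writing $\defl^G_{G/N}(e_{(H,\Phi)}^G)=\sum_{(K,\Psi)\in[(G/N)\backslash\calX(G/N)]}c_{(K,\Psi)}e_{(K,\Psi)}^{G/N}$, I would fix $(K,\Psi)$ not $G/N$-conjugate to $(HN/N,\Phi_N)$ and apply the Frobenius identity with $x=e_{(H,\Phi)}^G$ and $y=e_{(K,\Psi)}^{G/N}$: by Proposition~\ref{prop s comp inf}(b) the element $\infl^G_{G/N}(e_{(K,\Psi)}^{G/N})$ is a sum of primitive idempotents $e_{(H',\Phi')}^G$ with $(H'N/N,\Phi'_N)=_{G/N}(K,\Psi)$, and since the assignment $(H,\Phi)\mapsto(HN/N,\Phi_N)$ is compatible with the conjugation actions, none of those summands is $G$-conjugate to $(H,\Phi)$; hence orthogonality of the primitive idempotents of $B^A_\KK(G)$ forces $\infl^G_{G/N}(e_{(K,\Psi)}^{G/N})\cdot e_{(H,\Phi)}^G=0$, so $c_{(K,\Psi)}e_{(K,\Psi)}^{G/N}=e_{(K,\Psi)}^{G/N}\cdot\defl^G_{G/N}(e_{(H,\Phi)}^G)=\defl^G_{G/N}(0)=0$. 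Thus $\defl^G_{G/N}(e_{(H,\Phi)}^G)$ is a scalar multiple of $e_{(HN/N,\Phi_N)}^{G/N}$, and I define $m_{(H,\Phi)}^{G,N}$ to be that scalar; this proves (a) for every $(H,\Phi)$ at once.

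For (b) I take $H=G$ (so $HN/N=G/N$) and apply the $\KK$-algebra homomorphism $s_{(G/N,\Phi_N)}^{G/N}=s_{\Phi_N}^{G/N}\circ\pi_{G/N}$ (using $\res^{G/N}_{G/N}=\id$) to both sides of \eqref{eqn m equation}; since $s_{(G/N,\Phi_N)}^{G/N}(e_{(G/N,\Phi_N)}^{G/N})=1$ by \eqref{eqn s and e}, this yields $m_{(G,\Phi)}^N=s_{\Phi_N}^{G/N}\bigl(\pi_{G/N}\bigl(\defl^G_{G/N}(e_{(G,\Phi)}^G)\bigr)\bigr)$. Now substitute the explicit formula \eqref{eqn idempotent formula 3} for $e_{(G,\Phi)}^G$ (with $N_G(G,\Phi)=G$, as $G$ acts trivially on $G^*$, hence on $\Hom(G^*,\KK^\times)$), apply $\defl^G_{G/N}$ term by term, and then $\pi_{G/N}$, which kills every term except those coming from $K\le G$ with $KN=G$; such a term $[K,\phi|_K]_G$ survives only when $K\cap N\le\ker\phi$, and then contributes $\widetilde{\phi|_K}\in\KK(G/N)^*$. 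Applying $s_{\Phi_N}^{G/N}$ turns $\widetilde{\phi|_K}$ into $\Phi_N(\widetilde{\phi|_K})=\Phi(\phi^\flat)$, where $\phi^\flat\in G^*$ is the unique homomorphism agreeing with $\phi$ on $K$ and trivial on $N$ (well defined since $G=KN$ and $K\cap N\le\ker\phi$). The key simplification is that $\Phi(\phi^{-1})\cdot\Phi(\phi^\flat)=\Phi(\phi^{-1}\phi^\flat)=1$ for every surviving term, since $\phi^{-1}\phi^\flat$ is trivial on $K$, hence lies in $K^\perp$, on which $\Phi$ is trivial by the summation constraint $\Phi|_{K^\perp}=1$. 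What remains is $\tfrac{1}{|G|\cdot|G^*|}\sum_{K\le G,\ KN=G,\ \Phi|_{K^\perp}=1}|K|\mu(K,G)\cdot\#\{\phi\in G^*:K\cap N\le\ker\phi\}$, and since $KN=G$ one has $\{\phi\in G^*:\phi|_{K\cap N}=1\}=K^\perp\cdot(G/N)^*$ as an internal direct product inside $G^*$ (the intersection equals $\{\phi:\phi|_{KN}=1\}=1$, and any $\phi$ trivial on $K\cap N$ factors as $\phi=\psi\theta$ with $\theta(kn):=\phi(k)$ well defined and trivial on $N$ and $\psi:=\phi\theta^{-1}\in K^\perp$), whence $\#\{\phi\in G^*:K\cap N\le\ker\phi\}=|K^\perp|\cdot|(G/N)^*|$. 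Substituting gives precisely \eqref{eqn m formula}, and rationality is immediate because the $\Phi$-factors have cancelled.

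I expect the main obstacle to be the bookkeeping in part (b): keeping track of exactly which standard basis elements survive $\pi_{G/N}\circ\defl^G_{G/N}$, correctly identifying the auxiliary character $\phi^\flat$ together with the membership $\phi^{-1}\phi^\flat\in K^\perp$, and establishing the counting identity $\#\{\phi\in G^*:K\cap N\le\ker\phi\}=|K^\perp|\cdot|(G/N)^*|$ for $KN=G$. Once the Frobenius identity for deflation is granted, part (a) and the reduction $m_{(G,\Phi)}^N=s_{(G/N,\Phi_N)}^{G/N}(\defl^G_{G/N}(e_{(G,\Phi)}^G))$ are formal, so the whole proof rests on that identity together with the explicit idempotent formula of Theorem~\ref{thm idempotent formula}.
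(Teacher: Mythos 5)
Your proof is correct. Part (a) is essentially the paper's own argument: both rest on the projection (Frobenius) identity $\defl^G_{G/N}(\infl^G_{G/N}(y)\cdot x)=y\cdot\defl^G_{G/N}(x)$, which the paper also justifies by the Green biset functor axioms; the paper computes $y\cdot\defl^G_{G/N}(e_{(H,\Phi)}^G)=s_{(HN/N,\Phi_N)}^{G/N}(y)\,\defl^G_{G/N}(e_{(H,\Phi)}^G)$ for arbitrary $y$ using Proposition~\ref{prop s comp inf}(a), whereas you expand in the idempotent basis and kill the other components via Proposition~\ref{prop s comp inf}(b) and orthogonality --- a cosmetic difference. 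In part (b) your extraction of the scalar differs mildly from the paper's: the paper compares the coefficient of the single basis element $[G/N,1]_{G/N}$ on both sides of \eqref{eqn m equation}, which requires the explicit formula \eqref{eqn idempotent formula 3} for both $e_{(G,\Phi)}^G$ and $e_{(G/N,\Phi_N)}^{G/N}$ but makes the character factor trivial at once (only $\phi\in K^\perp$ survives); you instead apply the species $s_{(G/N,\Phi_N)}^{G/N}$ to both sides, so you only need the formula for $e_{(G,\Phi)}^G$, at the cost of two extra verifications: the cancellation $\Phi(\phi^{-1})\Phi_N(\widetilde{\phi|_K})=\Phi(\phi^{-1}\phi^\flat)=1$ via $\phi^{-1}\phi^\flat\in K^\perp$, and the count $\#\{\phi\in G^*:\phi|_{K\cap N}=1\}=|K^\perp|\cdot|(G/N)^*|$ when $KN=G$ (internal product $K^\perp\cdot N^\perp$ with trivial intersection since $(KN)^\perp=\{1\}$, and $N^\perp\cong(G/N)^*$ by inflation). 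Both of these check out --- well-definedness and multiplicativity of $\theta(kn):=\phi(k)$ use exactly $K\cap N\le\ker\phi$, normality of $N$, and commutativity of $A$ --- and, as you note, $N_G(G,\Phi)=G$ since $G$ acts trivially on $G^*$. So the two routes yield the same formula; yours trades the second idempotent formula for a small amount of character bookkeeping.
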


\begin{proof}
(a)  For any $x\in B_\KK^A(G/N)$ we have
\begin{align*}
   x\cdot \defl^G_{G/N}(e_{(H,\Phi)}^G) & = \defl^G_{G/N}(\infl^G_{G/N}(x)\cdot e_{(H,\Phi)}^G) = 
        \defl^G_{G/N}(s_{(H,\Phi)}^G(\infl_{G/N}^G(x))\cdot e_{(H,\Phi)}^G) \\
   & = \defl^G_{G/N}(s_{(HN/N,\Phi_N)}^{G/N}(x)\cdot e_{(H,\Phi)}^G) = 
        s_{(HN/N,\Phi_N)}^{G/N}(x)\cdot \defl^G_{G/N}(e_{(H,\Phi)}^G)\,.
\end{align*}
In fact, the first equation follows from the Green biset functor axioms (see \cite[Definition~7.2(a)]{BoltjeRaggiValero2019} and \cite[Definici\'on~3.2.7, Lema~4.2.3]{Romero2011}), the second from (\ref{eqn s and e}), and the third from Proposition~\ref{prop s comp inf}(a). Choosing $x=e_{(HN/N,\Phi_N)}^{G/N}$, and reading the above equations backward, we obtain \begin{equation*}
   \defl^G_{G/N}(e_{(H,\Phi)}^G) = \defl^G_{G/N}(e_{(H,\Phi)}^G)\cdot e_{(HN/N,\Phi_N)}^G\,.
\end{equation*} 
Now, (\ref{eqn s and e}) implies the result with $m_{(H,\Phi)}^{G,N}=s_{(HN/N,\Phi_N)}^{G/N}(\defl^G_{G/N}(e_{(H,\Phi)}^G))$.

\smallskip
(b) Substituting the explicit idempotent formula (\ref{eqn idempotent formula 3}) for $e_{(G,\Phi)}^G$ and using the explicit formula for $\defl^G_{G/N}\colon B_\KK^G(G)\to B_\KK^A(G/N)$ from \ref{noth A-fibered biset functors}(c), the left hand side of (\ref{eqn m equation}) is equal to
\begin{equation*}
   \frac{1}{|G|\cdot|G^*|} \sum_{\substack{ K\le G\\ \Phi|_{K^\perp}=1}} \ \sum_{\substack{\phi\in G^*\\ \phi|_{K\cap N}=1}} 
   |K|\,\mu(K,G)\,\Phi(\phi^{-1})\, [KN/N, \widetilde{\phi|_K}]_{G/N}\,,
\end{equation*}
where $\widetilde{\phi|_K}(kN):=\phi(k)$ for $k\in K$.
Moreover, using the explicit formula (\ref{eqn idempotent formula 3}) for $e_{(G/N,\Phi_N)}^{G/N}$, the right hand side of (\ref{eqn m equation}) is equal to
\begin{equation*}
   \frac{m^N_{(G,\Phi)}}{|G/N|\cdot|(G/N)^*|}\sum_{\substack{U/N\le G/N\\ \Phi_N|_{(U/N)^\perp}=1}}\ \sum_{\psi\in(G/N)^*}
   |U/N|\,\mu(U/N,G/N)\,\Phi_N(\psi^{-1})\, [U/N,\psi|_{U/N}]_{G/N}\,.
\end{equation*}
Next we compare the coefficients at the standard basis element $[G/N,1]_{G/N}$ of $B_\KK^A(G/N)$ on both sides. On the left hand side, we only have to sum over those $K\le G$ with $KN=G$ and those $\phi\in G^*$ with $\widetilde{\phi|_K} = 1$. By the definition of $\widetilde{\phi|_K}$, this is equivalent to $\phi\in K^\perp$. But then $\Phi|_{K^\perp}=1$ implies that $\Phi(\phi^{-1})=1$ for all such $\phi$. Thus, the coefficient of $[G/N,1]_{G/N}$ on the left hand side of (\ref{eqn m equation}) is equal to
\begin{equation*}
   \frac{1}{|G|\cdot|G^*|} \sum_{\substack{K\le G\\ \Phi|_{K^\perp}=1 \\ KN=G}} |K|\,|K^\perp|\, \mu(K,G)\,.
\end{equation*}
On the right hand side of (\ref{eqn m equation}) only the summands with $U=G$ and $\psi=1$ contribute to the coefficient of $[G/N,1]_{G/N}$ and this coefficient evaluates to $m^N_{(G,\Phi)}/|(G/N)^*|$. The result follows.   
\end{proof}

\begin{proposition}\label{prop ind of e}
Let $H$ be a subgroup of $G$ and let $(K,\Psi)\in \calX(H)$. Then
\begin{equation*}
   \ind_H^G(e_{(K,\Psi)}^H) = \frac{|N_G(K,\Psi)|}{|N_H(K,\Psi)|}\cdot e_{(K,\Psi)}^G\,.
\end{equation*}
\end{proposition}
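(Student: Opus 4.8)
The plan is to apply all species $s_{(L,\Phi)}^G$, $(L,\Phi)\in\calX(G)$, to both sides and invoke the injectivity of the mark morphism from Theorem~\ref{thm mark isomorphism}. Since $e_{(K,\Psi)}^G$ and each species depend only on the $G$-orbit of their indexing pair, it suffices to check the identity after applying $s_{(L,\Phi)}^G$ for $(L,\Phi)$ running over representatives of the $G$-orbits on $\calX(G)$. By (\ref{eqn s and e}) the right-hand side of the claimed formula evaluates under $s_{(L,\Phi)}^G$ to $|N_G(K,\Psi)|/|N_H(K,\Psi)|$ if $(L,\Phi)=_G(K,\Psi)$ and to $0$ otherwise, so everything reduces to computing $s_{(L,\Phi)}^G\bigl(\ind_H^G(e_{(K,\Psi)}^H)\bigr)$.

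The main step is a Mackey-type formula for species: for $H\le G$ and $(L,\Phi)\in\calX(G)$,
\[
  s_{(L,\Phi)}^G\circ\ind_H^G \;=\; \sum_{\substack{g\in[L\backslash G/H]\\ \lexp{g^{-1}}{L}\le H}} s_{(\lexp{g^{-1}}{L},\,\lexp{g^{-1}}{\Phi})}^H\colon B_\KK^A(H)\to\KK\,.
\]
I would derive this from the classical Mackey formula $\res^G_L\circ\ind_H^G=\sum_{g\in[L\backslash G/H]}\ind^L_{L\cap\lexp{g}{H}}\circ\isom_{c_g}\circ\res^H_{H\cap\lexp{g^{-1}}{L}}$, which remains valid in $\calC_\KK^A$ because the embedding $\calC_\KK\subseteq\calC_\KK^A$ of \ref{noth A-fibered biset functors}(c) preserves composition. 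Writing $s_{(L,\Phi)}^G=s_\Phi^L\circ\pi_L\circ\res^G_L$ and composing with $\ind_H^G$, only the summands with $L\cap\lexp{g}{H}=L$, i.e.\ $\lexp{g^{-1}}{L}\le H$, survive after applying $\pi_L$ (since $\pi_L$ kills $[V,\phi]_L$ for $V<L$), and for those $\ind^L_L=\id$ while $\isom_{c_g}$ is the conjugation map $B_\KK^A(\lexp{g^{-1}}{L})\to B_\KK^A(L)$. Then I would use the compatibilities $\pi_L\circ\isom_{c_g}=\lexp{g}{(-)}\circ\pi_{\lexp{g^{-1}}{L}}$ and $s_\Phi^L\circ\lexp{g}{(-)}=s_{\lexp{g^{-1}}{\Phi}}^{\lexp{g^{-1}}{L}}$ on $\KK(\lexp{g^{-1}}{L})^*$ to rewrite each surviving term as $s_{(\lexp{g^{-1}}{L},\lexp{g^{-1}}{\Phi})}^H$. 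Both compatibilities, together with the fact that the summand depends only on the double coset $LgH$, rest on conjugation by an element of a group $P$ acting trivially on $P^*=\Hom(P,A)$ because $A$ is abelian.

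Evaluating at $y=e_{(K,\Psi)}^H$ (which is defined since $K\le H\le G$ and $H$ inherits the standing hypotheses of Section~\ref{sec elementary operations on primitive idempotents}) and using (\ref{eqn s and e}), one gets
\[
  s_{(L,\Phi)}^G\bigl(\ind_H^G(e_{(K,\Psi)}^H)\bigr) = \#\bigl\{\,g\in[L\backslash G/H]\;:\;\lexp{g^{-1}}{L}\le H,\ (\lexp{g^{-1}}{L},\lexp{g^{-1}}{\Phi})=_H(K,\Psi)\,\bigr\}\,.
\]
If $(L,\Phi)$ is not $G$-conjugate to $(K,\Psi)$ this set is empty and the value is $0$. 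If $(L,\Phi)=_G(K,\Psi)$ I may take $(L,\Phi)=(K,\Psi)$, and must count the $(K,H)$-double cosets $KgH$ with $\lexp{g^{-1}}{K}\le H$ and $(\lexp{g^{-1}}{K},\lexp{g^{-1}}{\Psi})=_H(K,\Psi)$. A short computation (again using that $K$ acts trivially on $K^*$, so the conditions only depend on the double coset) shows that $g$ lies in such a double coset precisely when $hg^{-1}\in N:=N_G(K,\Psi)$ for some $h\in H$, i.e.\ precisely when $g\in NH$. Since $K\trianglelefteq N_G(K)$ forces $K\trianglelefteq N$, and $K\le N\cap H=N_H(K,\Psi)$, any $g=nh_0\in NH$ with $n\in N$, $h_0\in H$ satisfies $KgH=KnH=nKH=nH$; hence the relevant double cosets are exactly the left cosets $nH$, $n\in N$, and there are $[N:N\cap H]=|N_G(K,\Psi)|/|N_H(K,\Psi)|$ of them. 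This matches the right-hand side, and injectivity of the mark morphism finishes the proof.

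The step I expect to be the main obstacle is the Mackey-type formula for species and the accompanying bookkeeping: one must track all the conjugation isomorphisms and exploit repeatedly that $A$ being abelian trivializes inner conjugation on the groups $\Hom(-,A)$ and $\Hom((-)^*,\KK^\times)$. Once that formula is in place, the final double-coset count is routine.
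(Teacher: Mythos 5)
Your argument is correct, but it takes a genuinely different route from the paper. The paper's proof is a one-liner: by the explicit formula (\ref{eqn idempotent formula 3}), $e_{(K,\Psi)}^H$ and $e_{(K,\Psi)}^G$ are given by literally the same $\KK$-linear combination of elements $[L,\phi|_L]$ (summed over $L\le K$ with $\Psi|_{L^\perp}=1$ and $\phi\in K^*$), except that the normalizing factor is $1/|N_H(K,\Psi)|$ in the first case and $1/|N_G(K,\Psi)|$ in the second; since $\ind_H^G([L,\phi]_H)=[L,\phi]_G$ on standard basis elements, the identity follows by direct comparison of coefficients. You instead avoid the explicit idempotent formula entirely and work on the level of species: you prove a Mackey-type formula $s_{(L,\Phi)}^G\circ\ind_H^G=\sum s_{(\lexp{g^{-1}}{L},\lexp{g^{-1}}{\Phi})}^H$ over double cosets with $\lexp{g^{-1}}{L}\le H$ (which does hold --- the transfer of the classical Mackey relation through the functor $\calC_\KK\subseteq\calC_\KK^A$ is legitimate, the compatibilities $\pi_L\circ\isom_{c_g}=c_g\circ\pi_{\lexp{g^{-1}}{L}}$ and $s_\Phi^L\circ c_g=s_{\lexp{g^{-1}}{\Phi}}^{\lexp{g^{-1}}{L}}$ check out with the paper's conventions, and the double-coset independence does rest on $L$ acting trivially on $L^*$), then evaluate at $e_{(K,\Psi)}^H$ via (\ref{eqn s and e}) and count: your reduction of the relevant $(K,H)$-double cosets to the left cosets $nH$, $n\in N_G(K,\Psi)$, using $K\trianglelefteq N_G(K,\Psi)$ and $K\le H$, correctly gives $[N_G(K,\Psi):N_H(K,\Psi)]$, and injectivity of the mark morphism (Theorem~\ref{thm mark isomorphism}) finishes it. What your approach buys is a formula of independent interest --- the analogue for species of $B_\KK^A$ of the classical mark formula for induced elements of the Burnside ring --- at the cost of considerably more bookkeeping; what the paper's approach buys is brevity, since Theorem~\ref{thm idempotent formula} already encodes all the normalizer combinatorics and induction merely relabels basis elements.
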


\begin{proof}
This is an immediate consequence of the explicit formula in (\ref{eqn idempotent formula 3}), since $\ind_H^G([L,\phi]_H)=[L,\phi]_G$ for any $(L,\phi)\in \calM(H)$.
\end{proof}

\begin{proposition}\label{prop isom and twist on idempotents}
{\rm (a)} For every isomorphism $f\colon G\myiso G'$ and $(H,\Phi)\in\calX(G)$ one has $\isom_f(e_{(H,\Phi)}^G) = e_{(f(H),\Phi\circ (f|_H)^*)}^{G'}$.

\smallskip
{\rm (b)} For every $g\in G$, $H\le G$, and $(K,\Psi)\in\calX(H)$ one has $\lexp{g}{e_{(K,\Psi)}^H} = e_{(\lexp{g}{K},\lexp{g}{\Psi})}^{\lexp{g}{H}}$.

\smallskip
{\rm (c)} For every $(H,\Phi)\in\calX(G)$ and $\alpha\in G^*$ one has $\tw_\alpha(e_{(H,\Phi)}^G) = \Phi(\alpha|_H)\, e_{(H,\Phi)}^G$ and $\Delta(e_{(H,\Phi)}^G)\cdot_G \tw_\alpha= \Phi(\alpha|_H)\, \Delta(e_{(H,\Phi)}^G)$, with $\Delta$ as in \ref{noth A-fibered Burnside ring}(c).
\end{proposition}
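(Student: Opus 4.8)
The plan is to verify each of the three formulas directly by evaluating both sides under the mark isomorphism of Theorem~\ref{thm mark isomorphism}, since each operation in question is a $\KK$-algebra homomorphism (or, in part (c), multiplication by an element) and idempotents are detected coordinate-wise by the homomorphisms $s_{(L,\Psi)}$. For part (a), I would first note that $\isom_f\colon B_\KK^A(G)\to B_\KK^A(G')$ is a $\KK$-algebra isomorphism, so it carries primitive idempotents to primitive idempotents; then I would compute, for $(L,\Psi)\in\calX(G')$, the scalar $s_{(L,\Psi)}^{G'}(\isom_f(e_{(H,\Phi)}^G))$. Using the defining formula $s_{(L,\Psi)}^{G'}=s_\Psi^L\circ\pi_L\circ\res^{G'}_L$, the compatibility $\res^{G'}_L\circ\isom_f=\isom_{f|_{f^{-1}(L)}}\circ\res^G_{f^{-1}(L)}$, and the explicit action of $\isom_f$ on standard basis elements from \ref{noth A-fibered biset functors}(c), one sees that $s_{(L,\Psi)}^{G'}\circ\isom_f = s_{(f^{-1}(L),\Psi\circ(f|_{f^{-1}(L)})^*)}^G$. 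Comparing with (\ref{eqn s and e}), this scalar is $1$ exactly when $(f^{-1}(L),\Psi\circ(f|_{f^{-1}(L)})^*)=_G(H,\Phi)$, i.e.\ when $(L,\Psi)=_{G'}(f(H),\Phi\circ(f|_H)^*)$, which is precisely the claim.

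Part (b) is then the special case of (a) for $f=c_g\colon H\myiso \lexp{g}{H}$, since $\lexp{g}{(-)}:=\isom_{c_g}$ by definition; one only has to observe that $\Psi\circ(c_g|_K)^* = \lexp{g}{\Psi}$ unwinding the definition $\lexp{g}{\Psi}(\phi)=\Psi(\lexp{g^{-1}}{\phi})$ and that $c_g(K)=\lexp{g}{K}$. So (b) requires essentially no extra work once (a) is in place.

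For part (c), the first identity can be read off the explicit formula for $\tw_\alpha$ on standard basis elements, namely $\tw_\alpha([U,\psi]_G)=[U,\alpha|_U\cdot\psi]_G$, together with the idempotent formula (\ref{eqn idempotent formula 3}): applying $\tw_\alpha$ replaces each $[K,\phi|_K]_G$ by $[K,(\alpha|_K)(\phi|_K)]_G=[K,(\alpha\phi)|_K]_G$, and re-indexing the sum over $H^*$ via $\phi\mapsto\alpha^{-1}\phi$ turns the coefficient $\Phi(\phi^{-1})$ into $\Phi(\alpha|_H)\cdot\Phi(\phi^{-1})$ (using that $\Phi$ is a homomorphism and that the constraint $\Phi|_{K^\perp}=1$ is unchanged); this yields $\tw_\alpha(e_{(H,\Phi)}^G)=\Phi(\alpha|_H)\,e_{(H,\Phi)}^G$. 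Alternatively, and more cleanly, since $\tw_\alpha = \Delta([G,\alpha]_G)$ acts on $B_\KK^A(G)$ by ring multiplication, one computes $s_{(H,\Phi)}^G(\tw_\alpha) = s_\Phi^H(\pi_H(\res^G_H([G,\alpha]_G))) = s_\Phi^H(\alpha|_H)=\Phi(\alpha|_H)$, and then (\ref{eqn s and e}) gives $\tw_\alpha\cdot e_{(H,\Phi)}^G = s_{(H,\Phi)}^G(\tw_\alpha)\,e_{(H,\Phi)}^G=\Phi(\alpha|_H)\,e_{(H,\Phi)}^G$; I would present this second argument as the primary one. The second identity of (c) follows by applying the ring homomorphism $\Delta\colon B_\KK^A(G)\to B_\KK^A(G,G)$ (from \ref{noth A-fibered Burnside ring}(c)) to the first: $\Delta$ is multiplicative for $\cdot_G$ and $\Delta(\tw_\alpha)=\tw_\alpha$ by definition, so $\Delta(e_{(H,\Phi)}^G)\cdot_G\tw_\alpha = \Delta(e_{(H,\Phi)}^G\cdot\tw_\alpha)=\Delta(\Phi(\alpha|_H)\,e_{(H,\Phi)}^G)=\Phi(\alpha|_H)\,\Delta(e_{(H,\Phi)}^G)$. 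The only mild obstacle is bookkeeping the compatibility of $\res$ with $\isom$ and the dual-map notation $(f|_H)^*$ consistently; there is no real difficulty, as everything reduces to the coordinate description furnished by Theorem~\ref{thm mark isomorphism} and the explicit basis-element formulas in \ref{noth A-fibered biset functors}(c).
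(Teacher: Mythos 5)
Your proof is correct, but it takes a different route from the paper. The paper disposes of all three parts in one line by substituting the explicit idempotent formula (\ref{eqn idempotent formula 3}) into the explicit formulas for $\isom_f$, conjugation and $\tw_\alpha$ from \ref{noth A-fibered biset functors}(c) (together with (\ref{eqn tensor product formula}) for the $\Delta$-identity in (c)) --- essentially the computation you relegate to your ``alternative'' argument in (c). You instead detect the idempotents coordinate-wise through the species: for (a) you compute $s_{(L,\Psi)}^{G'}\circ\isom_f$ using the commutation of $\isom$ with $\res$ and conclude via (\ref{eqn s and e}) and Theorem~\ref{thm mark isomorphism}; you then get (b) as the special case $f=c_g$, and prove (c) by evaluating $s_{(H,\Phi)}^G([G,\alpha]_G)=\Phi(\alpha|_H)$ and pushing the first identity through the ring homomorphism $\Delta$ of \ref{noth A-fibered Burnside ring}(c). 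This is a perfectly valid and arguably more structural argument, in the same spirit as the paper's proofs of Propositions~\ref{prop s comp res} and \ref{prop s comp inf}, and it avoids re-indexing sums in the idempotent formula; the paper's substitution argument is shorter once formula (\ref{eqn idempotent formula 3}) is in hand and treats all parts uniformly. Two small notational points: in your identity $s_{(L,\Psi)}^{G'}\circ\isom_f = s_{(f^{-1}(L),\Psi\circ(f|_{f^{-1}(L)})^*)}^G$ the second entry should be $\Psi\circ\bigl((f|_{f^{-1}(L)})^{-1}\bigr)^*$, since $(f|_{f^{-1}(L)})^*$ maps $L^*$ to $(f^{-1}(L))^*$ and cannot be precomposed with $\Psi\in\Hom(L^*,\KK^\times)$ (your subsequent equivalence with $(L,\Psi)=_{G'}(f(H),\Phi\circ(f|_H)^*)$ is the correct one); and in the last step of (c) the expression $\Delta(e_{(H,\Phi)}^G\cdot\tw_\alpha)$ should read $\Delta(e_{(H,\Phi)}^G\cdot[G,\alpha]_G)$, using $\tw_\alpha=\Delta([G,\alpha]_G)$ and the multiplicativity of $\Delta$. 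Neither affects the substance of the argument.
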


\begin{proof}
All three parts follow immediately from the explicit formulas for the three operations in \ref{noth A-fibered biset functors}(c), the explicit idempotent formula (\ref{eqn idempotent formula 3}), and the formula in (\ref{eqn tensor product formula}).
\end{proof}

%%%%%%%%%%%%%%%%%%% SECTION 5 %%%%%%%%%%%%%%%%%%%%%%%%%%%%%%%%%%%%%

\section{Three lemmata}\label{sec two lemmas}

Throughout this section, $G$ and $H$ denote finite groups and $\KK$ a field of characteristic $0$ such that $S^*$ is finite and $\KK$ is a splitting field of $S^*$ for all subquotients $S$ of $G$ and $H$.

\smallskip
For $U\le G\times H$ we set $q(U):=U/(k_1(U)\times k_2(U)$. Thus, $q(U)\cong p_i(U)/k_i(U)$ for $i=1,2$.

\begin{lemma}\label{lem ext and fact}
Let $G$ and $H$ be finite groups and let $k$ be a commutative ring. For $(U,\phi)\in\calM(G\times H)$ with $p_1(U)=G$ and $p_2(U)=H$ the following are equivalent:

\smallskip
{\rm (i)} There exists $\alpha\in G^*$ such that $\alpha|_{k_1(U)}=\phi_1$.

\smallskip
{\rm (ii)} There exists $\beta\in H^*$ such that $\beta|_{k_2(U)}=\phi_2$.

\smallskip
{\rm (iii)} There exists $\psi\in(G\times H)^*$ such that $\psi|_U=\phi$.

\smallskip
{\rm (iv)} In the category $\calC^A$, the morphism $\left[\frac{G\times H}{U,\phi}\right]$ factors through $q(U)$.

\smallskip
{\rm (v)} In the category $\calC^A_k$, the morphism $\left[\frac{G\times H}{U,\phi}\right]$ factors through $q(U)$.
\end{lemma}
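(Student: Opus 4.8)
The plan is to prove a cycle of implications, most of which are either elementary group theory or a direct application of the canonical decomposition in Theorem~\ref{thm can decomp}. The equivalence of (i), (ii), (iii) is pure abelian-group extension theory, and the equivalence of (iv) and (v) is formal (a $k$-linear combination of morphisms factoring through an object is, by inspecting coefficients, equivalent to one of the basis morphisms doing so, and $B^A_k(G,H) = k \otimes_\ZZ B^A(G,H)$); the substantive content is (iii) $\Leftrightarrow$ (iv).

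First I would record the abelian-group facts. Since $U \le k_1(U) \times k_2(U) \cdot (\text{graph part})$ and $q(U) \cong p_i(U)/k_i(U)$, the restriction map $(G\times H)^* \to U^*$ has image describable via the projections; precisely, $\psi|_U = \phi$ for some $\psi \in (G\times H)^*$ iff the two ``halves'' of $\phi$, namely $\phi_1 \in k_1(U)^*$ and $\phi_2 \in k_2(U)^*$, each extend to $p_1(U)^* = G^*$ and $p_2(U)^* = H^*$ respectively \emph{and} these extensions can be chosen compatibly on the graph part $q(U)$. But because $\phi$ is already defined consistently on all of $U$, once $\phi_1$ extends to $\alpha \in G^*$ one can twist by $\tw_\alpha$ to reduce to the case $\phi_1 = 1$, i.e. $k_1(U) \le \ker\phi$; then $\phi$ factors through $U/(k_1(U)\times 1)$, its ``$H$-half'' $\phi_2$ becomes the obstruction, and since $k_2(U)$ is then in the kernel too (as $\phi$ vanishes on $k_1(U)\times k_2(U)$ forces $\phi_2 = 1$ after this twist — here one must be slightly careful, $\phi|_{k_1(U)\times k_2(U)} = \phi_1 \times \phi_2^{-1}$, so $\phi_1 = 1$ does \emph{not} immediately give $\phi_2 = 1$, rather $\phi$ descends to $q(U)$ and $\phi_2$ extends trivially). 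This twisting argument is the cleanest way to see (i) $\Leftrightarrow$ (iii), and (ii) $\Leftrightarrow$ (iii) is symmetric; (i) $\Leftrightarrow$ (ii) then follows, or can be seen directly since both say $\phi$ restricted to $k_1(U)\times k_2(U)$ extends appropriately.

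Next, for (iii) $\Rightarrow$ (iv): assuming $\phi = \psi|_U$ with $\psi \in (G\times H)^*$, write $\psi = \alpha \times \beta^{-1}$ (identifying $(G\times H)^* = G^* \times H^*$), so $\alpha|_{k_1(U)} = \phi_1$, $\beta|_{k_2(U)} = \phi_2$, and in particular $K := \ker(\phi_1) = \ker(\alpha|_{p_1(U)})$, $L := \ker(\phi_2) = \ker(\beta|_{p_2(U)})$ in the notation of Theorem~\ref{thm can decomp}. I would apply that theorem to rewrite $\left[\frac{G\times H}{U,\phi}\right]$ as a composite $\ind_P^G \cdot \infl_{P/K}^P \cdot \left[\frac{P/K \times Q/L}{U/(K\times L),\phibar}\right] \cdot \defl^Q_{Q/L} \cdot \res^H_L$ where $P = G$, $Q = H$. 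Here $q(U) \cong P/K \cong Q/L$ once we know $K = k_1(U)$ and $L = k_2(U)$, which holds precisely because $\phi_1, \phi_2$ are trivial — but they are not; rather $K$ is the kernel of $\phi_1$, a subgroup of $k_1(U)$. The fix: twist the whole morphism by $\tw_\alpha$ on the left (using Proposition~\ref{prop isom and twist on idempotents}-style twist formulas, or rather \ref{noth A-fibered biset functors}(c)) to replace $\phi_1$ by the trivial character without changing $U$, similarly $\tw_{\beta^{-1}}$ on the right; after these twists $k_1(U) \le \ker$ of the new $\phi$, so the new $K$ equals $k_1(U)$, likewise $L = k_2(U)$, and Theorem~\ref{thm can decomp} genuinely factors the twisted morphism through $q(U) = U/(k_1(U)\times k_2(U))$. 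Since twists are isomorphisms in $\calC^A$ (they are invertible: $\tw_\alpha \cdot \tw_{\alpha^{-1}} = \id$), the original morphism factors through $q(U)$ as well. Then (iv) $\Rightarrow$ (v) is trivial (view $\calC^A$ inside $\calC^A_k$ via $k \otimes_\ZZ -$), and (v) $\Rightarrow$ (iii) is the converse direction: if the basis morphism factors through $q(U)$ in $\calC^A_k$, then comparing stabilizer pairs of the transitive bisets appearing — a morphism factoring through a group of order $|q(U)|$ cannot involve a biset with stabilizer projecting onto larger groups unless the kernels are at least $k_i(U)$ — forces $\phi$ to be trivial on $k_1(U)\times k_2(U)$ ``modulo'' an extendable character, i.e. (iii). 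More carefully for this last step I would argue contrapositively: if no $\psi$ extends $\phi$, then by (i)$\Leftrightarrow$(iii) failing, $\phi_1$ does not extend to $G^*$; but any factorization through $q(U)$ would, after composing with the structural maps, exhibit such an extension, a contradiction — this needs the explicit composition formula \eqref{eqn tensor product formula} to track what happens to the ``$\phi$-part'' under composition through a small group.

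The main obstacle is precisely the implication (v) $\Rightarrow$ (iii) (equivalently (iv) or (v) $\Rightarrow$ (i)): going from a \emph{factorization of a $k$-linear morphism} back to an \emph{extension of a character} requires unwinding the biset composition formula \eqref{eqn tensor product formula} and controlling which stabilizer pairs $(V,\chi)$ can appear in a product $\left[\frac{G\times R}{V,\chi}\right] \cdott{R} \left[\frac{R\times H}{W,\omega}\right]$ with $|R| = |q(U)|$; one must show that for the product to contain $\left[\frac{G\times H}{U,\phi}\right]$ with the given $(U,\phi)$, the character $\phi$ is forced to be of the extendable form. The cleanest route is probably to avoid this head-on: use the already-proven equivalence (i)$\Leftrightarrow$(ii)$\Leftrightarrow$(iii) together with (iii)$\Rightarrow$(iv)$\Rightarrow$(v), and for the return trip observe that $q(U)$ has order strictly controlling the ``kernel size'' — a morphism $X \cdot Y$ through $R$ lands in the span of basis elements $\left[\frac{G\times H}{V,\chi}\right]$ with $|k_1(V) \times k_2(V)| \ge$ (something forcing $k_1(V) \supseteq$ the obstruction subgroup) — so that if $\left[\frac{G\times H}{U,\phi}\right]$ itself appears, its own $\phi_1$ must extend. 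I expect this bookkeeping, rather than any deep idea, to be where the real work of the proof lies; everything else is the twist trick plus Theorem~\ref{thm can decomp}.
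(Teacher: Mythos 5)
Your treatment of (i), (ii), (iii) and of (iii)$\Rightarrow$(iv) is essentially the paper's: twist by $\tw_\alpha$ (the paper instead extends $\phi$ directly by the formula $\psi((g,1)u)=\alpha(g)\phi(u)$, using $G\times H=(G\times\{1\})U$, but this is cosmetic), and for (iii)$\Rightarrow$(iv) write the morphism as $\tw_\alpha\cdot_G\left[\frac{G\times H}{U,1}\right]\cdot_H\tw_\beta$ so that Theorem~\ref{thm can decomp} applies with $K=k_1(U)$, $L=k_2(U)$. One imprecision: after making $\phi_1$ trivial, your claim that ``$\phi$ descends to $q(U)$'' is false in general, since $\phi$ then restricts to $1\times\phi_2^{-1}$ on $k_1(U)\times k_2(U)$; it descends to $U/(k_1(U)\times\{1\})$, which is isomorphic to $H$ via $p_2$ (because $p_2(U)=H$ and the kernel of $p_2$ on this quotient is trivial), and it is this isomorphism that produces $\beta\in H^*$ with $1\times\beta$ extending the twisted character. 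That step is fixable.

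The genuine gap is (v)$\Rightarrow$(iii), which you yourself flag as ``the real work'' and then leave as bookkeeping you expect to go through. Neither of your suggested mechanisms suffices. A $k$-linear factorization through $q(U)$ gives only that $\left[\frac{G\times H}{U,\phi}\right]$ occurs with nonzero coefficient in a single product $\left[\frac{G\times K}{V,\psi}\right]\cdot_K\left[\frac{K\times H}{W,\rho}\right]$ of standard basis elements; in particular your claim that (iv)$\Leftrightarrow$(v) is ``formal by inspecting coefficients'' is not correct, since this does not return a factorization in $\calC^A$ — in the paper that equivalence is only obtained by closing the cycle through (iii). And no estimate on the kernels of the basis elements appearing in such products directly forces $\phi_1$ to extend. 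What the paper actually does, and what is absent from your outline, is the following: after conjugating one may assume $(U,\phi)=(V*W,\psi*\rho)$; then $k_1(V)\le k_1(V*W)=k_1(U)$ together with $p_1(U)=G$ and the comparison of $p_1(U)/k_1(U)$ with the subquotients $p_1(V)/k_1(V)\cong p_2(V)/k_2(V)$ of $K\cong p_1(U)/k_1(U)$ forces $p_1(V)=G$, $k_1(V)=k_1(U)$, $p_2(V)=K$, $k_2(V)=\{1\}$; then — crucially — one applies the already proved implication from (i)/(ii) to (iii) to the \emph{factors} $(V,\psi)$ and $(W,\rho)$ themselves (their relevant components $\psi_2$ and $\rho_1$ extend trivially) to extend them to $\alpha\times 1$ and $1\times\beta$, and concludes that $\phi=\psi*\rho$ is the restriction of $\alpha\times\beta$. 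Without this reduction to the factors, your version of the return trip asserts the conclusion rather than proving it.
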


\medskip
\begin{proof}
Clearly, (iii) implies (i) and (ii).

\smallskip
We next show that (i) implies (iii). Let $\alpha\in G^*$ be an extension of $\phi_1$. Since $\phi\in U^*$ and $\alpha\times 1\in(G\times\{1\})^*$ coincide on $U\cap(G\times \{1\})=k_1(U)\times\{1\}$ and since $G\times H = (G\times \{1\})U$ with $G\times \{1\}$ normal in $G\times H$, the function $\psi\colon G\times H\to A$, $(g,1)u\mapsto \alpha(g)\phi(u)$ is well-defined and extends $\phi$. It is also a homomorphism, since $G\times \{1\}$ is normal in $G\times H$ and $\alpha\times 1$ is $U$-stable.

\smallskip
Similarly one proves that (ii) implies (iii).

\smallskip
Next we show that (iii) and  implies (iv). Let $\psi=\alpha\times \beta\in (G\times H)^*$ extend $\phi\in U^*$. By (\ref{eqn tensor product formula}), we have
\begin{equation*}
   \left[\frac{G\times H}{U,\phi}\right] = \tw_{\alpha} \cdotG  \left[\frac{G\times H}{U, 1}\right] \cdotH \tw_{\beta} 
\end{equation*}
and the morphism $\left[\frac{G\times H}{U, 1}\right]$ factors through $G/k_1(U)$ by Theorem~\ref{thm can decomp}.

\smallskip
Clearly, (iv) implies (v).

Finally, we show that (v) implies (iii). Assume that $\left[\frac{G\times H}{U,\phi}\right]$ factors in $\calC_k^A$ through $K:=q(U)$ with $K\cong G/k_1(U)$. Then there exist $(V,\Psi)\in \calM(G\times K)$ and $(W,\rho)\in\calM(K\times H)$ such that $\left[\frac{G\times H}{U,\phi}\right]$ occurs with nonzero coefficient in $\left[\frac{G\times K}{V,\psi}\right]\cdot_K\left[\frac{K\times H}{W,\rho}\right]$. By the formula in (\ref{eqn tensor product formula}), this implies that there exists $t\in K$ and $(g,h)\in G\times H$ such that $\psi_2|_{K_t}=\rho_1|_{K_t}$ with $K_t:=k_2(V)\cap \lexp{t}{k_1(W)}$ and
$(U,\phi)=\lexp{(g,h)}{(V*\lexp{(t,1)}{W}, \psi*\lexp{(t,1)}{\rho})}$. Replacing $(V,\psi)$ with $\lexp{(g,1)}{(V,\psi)}$ and $(W,\rho)$ with $\lexp{(t,h)}{(W,\rho)}$, we may assume that there exist $(V,\psi)\in\calM(G\times K)$ and $(W,\rho)\in \calM(K\times H)$ with $\psi_2|_{k_2(V)\cap k_1(W)} = \rho_1|_{k_2(V)\cap k_1(W)}$ and $(V*W,\psi*\rho)=(U,\phi)$. By 
\cite[2.3.22.2]{Bouc2010} one has 
\begin{equation*}
  k_1(V)\le k_1(V*W)=k_1(U)\le p_1(U)\le p_1(V)\,.
\end{equation*}
Since $p_1(U)=G$, this implies that $p_1(V)=G$. Moreover, since $p_1(U)/k_1(U)$ is isomorphic to a subquotient of $p_1(V)/k_1(V)\cong p_2(V)/k_2(V)$, which in turn is a subquotient of $K\cong p_1(U)/k_1(U)$, we otain that $k_1(V)=k_1(U)$, $p_2(V)=K$ and $k_2(V)=\{1\}$. Since $\psi_2=1$, it extends trivially to $K=p_2(V)$. By the first part of the proof (note that $p_1(V)=G$ and $p_2(V)=K$) we also obtain that $\psi$ extends to $\alpha\times 1\in (G\times K)^*$ for some $\alpha\in G^*$. Similarly one shows that $\rho$ extends to $1\times \beta\in (K\times H)^*$ for some $\beta\in H^*$. But then $\phi=\psi*\rho$ is the restriction of $(\alpha\times 1)*(1\times \beta)=\alpha\times\beta$ and (iii) holds.
\end{proof}

\begin{lemma}\label{lem idem and ext}
Let $(U,\phi)\in \calM(G\times H)$. 

\smallskip
{\rm (a)} If $\Phi\in \Hom(H^*,\KK^\times)$ satisfies $\left[\frac{G\times H}{U,\phi}\right]\cdot_H e_{(H,\Phi)}^H\neq 0$ then $p_2(U)=H$ and $\phi_2$ extends to $H^*$.

\smallskip
{\rm (b)} If $\Phi\in \Hom(G^*,\KK^\times)$ satisfies $e_{(G,\Phi)}^G\cdot_G \left[\frac{G\times H}{U,\phi}\right] \neq 0$ then $p_1(U)=G$ and $\phi_1$ extends to $G^*$.
\end{lemma}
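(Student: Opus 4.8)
The plan is to prove (a) and deduce (b) by the evident left-right symmetry of the fibered biset category (replacing $U$ by its opposite $U^{\mathrm{op}}\le H\times G$ and $\phi$ by $\phi^{\mathrm{op}}$, and using that $\left[\frac{G\times H}{U,\phi}\right]^{\mathrm{op}}=\left[\frac{H\times G}{U^{\mathrm{op}},\phi^{\mathrm{op}}}\right]$ together with the fact that the opposite of an idempotent $e_{(G,\Phi)}^G$ is again of the same shape). So I focus on (a). First I would expand $\left[\frac{G\times H}{U,\phi}\right]\cdot_H e_{(H,\Phi)}^H$ using the explicit idempotent formula~(\ref{eqn idempotent formula 3}) for $e_{(H,\Phi)}^H$, which expresses it as a $\KK$-linear combination of elements $[L,\phi'|_L]_H$ with $L\le H$ and $L^\perp\supseteq\ker\Phi$-type conditions; then apply the tensor-product formula~(\ref{eqn tensor product formula}) termwise, where now the right-hand factor is of the form $\Delta([H,a]_H)$-type, i.e. comes from $\left[\frac{H\times H}{\Delta(L),\ldots}\right]$ via the ring homomorphism $\Delta$ of~\ref{noth A-fibered Burnside ring}(c).

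The cleaner route, which I would actually pursue, avoids grinding through~(\ref{eqn tensor product formula}): use the canonical decomposition of Theorem~\ref{thm can decomp} to write
\begin{equation*}
  \left[\frac{G\times H}{U,\phi}\right] = \ind_P^G \cdott{P}\infl_{P/K}^P\cdott{P/K}
  \left[\frac{P/K\times Q/L}{U/(K\times L),\phibar}\right]\cdott{Q/L}\defl^Q_{Q/L}\cdott{Q}\res^H_L\,,
\end{equation*}
where $P=p_1(U)$, $Q=p_2(U)$, $K=\ker(\phi_1)$, $L=\ker(\phi_2)$. Composing on the right with $e_{(H,\Phi)}^H$, the only part that meets $e_{(H,\Phi)}^H$ is $\res^H_Q\circ\, e_{(H,\Phi)}^H$ — wait, more precisely the rightmost biset is $\res^H_Q$ (in Theorem~\ref{thm can decomp} it is written $\res^H_L$ but with $L$ there playing the role of $p_2(U)=Q$ in the factorization; I will keep to the notation of Theorem~\ref{thm can decomp}, so that the rightmost factor is $\res^H_Q$ and the group on which it lands is $Q$). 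By Proposition~\ref{prop s comp res}(b), $\res^H_Q(e_{(H,\Phi)}^H)=\sum e_{(M,\Theta)}^Q$ over $(M,\Theta)\in[Q\backslash\calX(Q)]$ with $(M,\Theta)=_H(H,\Phi)$; this is nonzero only if some $Q$-conjugate of some such $(M,\Theta)$ equals $(H,\Phi)$, which forces $M=H$, hence $Q=H$, i.e. $p_2(U)=H$. That settles the first conclusion of (a). For the second, once $Q=H$ the rightmost factor is $\res^H_H=\id$, and the adjacent deflation factor is $\defl^H_{H/L}$ applied after $e_{(H,\Phi)}^H$; by Proposition~\ref{prop m number}(a), $\defl^H_{H/L}(e_{(H,\Phi)}^H)=m_{(H,\Phi)}^{H,L}\cdot e_{(H/L,\Phi_L)}^{H/L}$, and for the whole composite to be nonzero we need $m_{(H,\Phi)}^{H,L}\neq0$, which by the formula in Proposition~\ref{prop m number}(b) requires in particular the existence of a complement-type subgroup $R\le H$ with $RL=H$, $\Phi|_{R^\perp}=1$ — and I would then argue that this $\Phi$-vanishing condition, combined with $L=\ker(\phi_2)$ and $R L = H$, yields that $\phi_2$ extends to $H^*$.

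The main obstacle I anticipate is the last implication: extracting "$\phi_2$ extends to $H^*$" from the non-vanishing of the scalar $m_{(H,\Phi)}^{H,L}$. The honest way to see this is to observe that $\defl^H_{H/L}(e_{(H,\Phi)}^H)\neq 0$ means, by Lemma~\ref{lem pi and e} applied in $B_\KK^A(H/L)$ after multiplying by $e_{H/L}^{H/L}$ (or directly by the mark-homomorphism description), that $e_{(H,\Phi)}^H$ is not killed by deflation along $L$; but deflation of a standard basis element $[T,\psi]_H$ is zero unless $T\cap L\le\ker\psi$, so some basis element $[T,\psi]_H$ occurring in $e_{(H,\Phi)}^H$ with nonzero coefficient must satisfy $T\cap L\le\ker\psi$ — and by formula~(\ref{eqn idempotent formula 2}) these $T$ range over subgroups with $\Phi|_{T^\perp}=1$, while the associated $\psi$ are the $\phi|_T$ for $\phi\in H^*$. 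Here one has to be a little careful, because the condition one really gets is about the pair $(H,\Phi)$ (a character of $H^*$), not directly about $\phi_2\in k_2(U)^*$. The reconciliation is: $\phi_2$ is, by definition (see \ref{noth A-fibered Burnside ring}(b)), determined by $\phi|_{k_1(U)\times k_2(U)}$, and since $p_1(U)=G$, $p_2(U)=H$ we have $k_2(U)\trianglelefteq H$; the equivalence (ii)$\Leftrightarrow$(iii) of Lemma~\ref{lem ext and fact} then reduces "$\phi_2$ extends to $H^*$" to "$\phi$ extends to $(G\times H)^*$", which is a statement about whether $\left[\frac{G\times H}{U,\phi}\right]$ factors through $q(U)$ by (iv). Thus the cleanest packaging is: show directly that if $\left[\frac{G\times H}{U,\phi}\right]\cdot_H e_{(H,\Phi)}^H\neq0$ then condition (v) of Lemma~\ref{lem ext and fact} holds — because $e_{(H,\Phi)}^H$ itself factors suitably through $H/\ker\Phi$-type data — and then invoke Lemma~\ref{lem ext and fact}. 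I expect the actual write-up to combine the Theorem~\ref{thm can decomp} decomposition with Proposition~\ref{prop m number}(b) to produce the required factorization, and the one genuinely fiddly point will be tracking that the $\Phi$-orthogonality condition $\Phi|_{R^\perp}=1$ for a complement $R$ of $L$ in $H$ is exactly what makes $\phi_2$ extendible.
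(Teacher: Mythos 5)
Your reduction of the first claim, $p_2(U)=H$, is fine and is essentially the paper's own step (the paper just factors off $\res^H_{p_2(U)}$ and quotes Proposition~\ref{prop s comp res}(c)). The gap is in the second claim. From the canonical decomposition you extract, correctly, the necessary condition $\defl^H_{H/L}(e_{(H,\Phi)}^H)\neq 0$ with $L=\ker(\phi_2)$, i.e.\ $m_{(H,\Phi)}^{L}\neq 0$, hence the existence of some $R\le H$ with $RL=H$ and $\Phi|_{R^\perp}=1$. But this condition involves only $\Phi$, $R$ and the subgroup $L$; it remembers nothing about the character $\phi_2$ beyond its kernel, so it cannot force $\phi_2$ to extend to $H$. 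The degenerate case $L=\{1\}$ makes this unmistakable: then the deflation in the decomposition is the identity, $m_{(H,\Phi)}^{\{1\}}=1$, and $R=H$ satisfies your condition vacuously, yet a faithful $H$-stable character of $k_2(U)$ need not extend --- e.g.\ $H=Q_8$, $k_2(U)=Z(Q_8)$, $\phi_2$ the nontrivial character of the center: every $\lambda\in H^*$ kills $[H,H]=Z(H)$, so no extension exists. In that situation the lemma's content is precisely that the whole product must vanish, and your chain of necessary conditions is too coarse to detect this. Your fallback suggestion (deduce condition (v) of Lemma~\ref{lem ext and fact} and invoke that lemma) also does not work as stated: the equivalences there presuppose $p_1(U)=G$ and $p_2(U)=H$, whereas in part (a) $p_1(U)$ may be proper in $G$; and nonvanishing of the product against a single idempotent does not show that the morphism $\left[\frac{G\times H}{U,\phi}\right]$ itself factors through $q(U)$.

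The paper's proof supplies exactly the datum your argument lacks: a global character of $H$ compatible with $\phi_2$. It proceeds by induction on $|G|$, splitting on whether $\pi_G\bigl(\left[\frac{G\times H}{U,\phi}\right]\cdot_H e_{(H,\Phi)}^H\bigr)$ vanishes. If it is nonzero, the idempotent formula~(\ref{eqn idempotent formula 3}) yields $K\le H$ and $\psi\in H^*$ with $U*K=G$ and $\phi_2|_{k_2(U)\cap K}=\psi|_{k_2(U)\cap K}$; since $\phi_2$ is $p_2(U)=H$-stable and $U*K=G$ forces $k_2(U)K=H$, one glues $\phi_2$ and $\psi$ to a character of $H$ extending $\phi_2$. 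If $\pi_G$ of the product vanishes, one multiplies by $1-e_G^G=\sum_{K<G}e_K^G$ (Burnside idempotents), rewrites the result in terms of $\left[\frac{K\times H}{\Delta(K)*U,1*\phi}\right]$ with $K<G$, observes $k_2(\Delta(K)*U)=k_2(U)$ and $(1*\phi)_2=\phi_2$, and applies the inductive hypothesis. Any repair of your write-up will need a mechanism, like the extraction of $\psi$ above, that converts the interaction with $e_{(H,\Phi)}^H$ into an actual character of $H$ agreeing with $\phi_2$ on a suitable subgroup, rather than into the scalar $m_{(H,\Phi)}^{L}$ alone.
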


\begin{proof}
We only prove Part~(a). Part~(b) is proved similarly.

\smallskip
Since $\left[\frac{G\times H}{U,\phi}\right]= \left[\frac{G\times p_2(U)}{U,\phi}\right] \cdot_{p_2(U)} \res^H_{p_2(U)}$, Proposition~\ref{prop s comp res}(c) implies that $p_2(U)=H$.

We will show that $\phi_2$ extends to $H$ by induction on $|G|$. If $|G|=1$ then $\phi_1$ is trivial, thus extends to $G$, and Lemma~\ref{lem ext and fact} implies that $\phi_2$ extends to $H$. From now on assume that $|G|>1$. 
%Since $\left[\frac{G\times H}{U,\phi}\right]=\ind_{p_1(U)}^G \cdot_{p_1(U)}\left[\frac{p_1(U)\times H}{U,\phi}\right]$, by induction and Lemma~\ref{lem ext and fact} we may and will assume that $p_1(U)=G$. 
We distinguish two cases.

\smallskip
{\em Case 1:} $\pi_G\left(\left[\frac{G\times H}{U,\phi}\right]\cdot_H e_{(H,\Phi)}^H\right) = 0$. By Lemma~\ref{lem pi and e}, this implies that $e_G^G\cdot \left(\left[\frac{G\times H}{U,\phi}\right]\cdot_H e_{(H,\Phi)}^H\right)=0$ and therefore, using the primitive idempotents $e_K^G$ of the Burnside ring (see Remark~\ref{rem comparing B and BA}) and \cite[Proposition~2]{CoskunYilmaz2019},  we have
\begin{align*}
  0 &\neq \left[\frac{G\times H}{U,\phi}\right] \cdotH e_{(H,\Phi)}^H 
            = (1-e_G^G)\cdot \left( \left[\frac{G\times H}{U,\phi}\right] \cdotH e_{(H,\Phi)}^H \right) \\
    & = \sum_{\substack{K\in [G\backslash \calS(G)]\\ K<G}} 
                    e_K^G \cdot \left( \left[\frac{G\times H}{U,\phi}\right] \cdotH e_{(H,\Phi)}^H \right)
           = \sum_{\substack{K\in [G\backslash \calS(G)]\\ K<G}}
                       \Delta(e_K^G)\cdotG \left (\left[\frac{G\times H}{U,\phi}\right] \cdotH e_{(H,\Phi)}^H \right)\,.
\end{align*}
Moreover, using the explicit formula for $e_K^G$ (in the special case that $A$ is trivial), we have
\begin{align*}
    0 & \neq   \left[\frac{G\times H}{U,\phi}\right] \cdotH e_{(H,\Phi)}^H
              \in \sum_{K<G}\KK\cdot\left[\frac{G\times G}{\Delta(K),1}\right] \cdotG 
                          \left( \left[\frac{G\times H}{U,\phi}\right] \cdotH e_{(H,\Phi)}^H \right) \\
       & = \sum_{K<G} \KK \cdot \left( \left[\frac{G\times G}{\Delta(K),1}\right] \cdotG \left[\frac{G\times H}{U,\phi}\right] \right)
                                    \cdotH e_{(H,\Phi)}^H
              = \sum_{K<G} \KK \cdot \left[\frac{G\times H}{\Delta(K)*U,1*\phi}\right] \cdotH e_{(H,\Phi)}^H\,.
\end{align*}
Therefore, there exists $K<G$ such that $\left[\frac{G\times H}{\Delta(K)*U,1*\phi}\right] \cdot_H e_{(H,\Phi)}^H\neq 0$. Note that $p_1(\Delta(K)*U)\le K$, so that we can decompose
\begin{equation*}
  \left[\frac{G\times H}{\Delta(K)*U,1*\phi}\right] = \ind_K^G \cdotK \left[\frac{K\times H}{\Delta(K)*U,1*\phi}\right]\,.
\end{equation*}
Thus,
\begin{equation}\label{eqn smaller situation}
  \left[\frac{K\times H}{\Delta(K)*U,1*\phi}\right]\cdotH e_{(H,\Phi)}^H \neq 0\,.
\end{equation}
By the first part of the proof this implies that $p_2(\Delta(K)*U)=H$. Moreover, it is straightforward to verify that $k_2(\Delta(K)*U)=k_2(U)$ and that $(1*\phi)_2=\phi_2\in k_2(U)^*$. Since $K<G$, the inductive hypothesis applied to (\ref{eqn smaller situation}) yields that $\phi_2$ extends to $H$.

\smallskip
{\em Case 2:} $\pi_G\!\left(\left[\frac{G\times H}{U,\phi}\right]\cdot_H e_{(H,\Phi)}^H\right) \neq 0$. The explicit formula for $e_{(H,\Phi)}^H$ in (\ref{eqn idempotent formula 3}) implies that
\begin{equation*}
   \sum_{\substack{K\le H\\ \Phi|_{K^\perp}=1}}\ \sum_{\psi\in H^*} |K|\, \mu(K,H)\, \Phi(\psi^{-1})\,
   \pi_G\!\left( \left[\frac{G\times H}{U,\phi}\right] \cdotH [K,\psi|_K]_H \right) \neq 0\,.
\end{equation*}
Thus, there exists $K\le H$ and $\psi\in H^*$ such that $\pi_G \left( \left[\frac{G\times H}{U,\phi}\right] \cdot_H [K,\psi|_K]_H \right) \neq 0$. This implies that $U*K=G$ and $\phi_2|_{k_2(U)\cap K} = \psi|_{k_2(U)\cap K}$. Since $\phi_2$ is stable under $p_2(U)=H$ and $\phi_2$ and $\psi$ coincide on $k_2(U)\cap K$, there exists an extension $\beta\in (k_2(U)K)^*$ of $\phi_2$ and $\psi$. Moreover, $U*K=G$ implies $k_2(U)K=H$. In fact, if $h\in H=p_2(U)$ then there exists $g\in G$ with $(g,h)\in U$. Since $g\in G=U*K$, there exists $k\in K$ such that $(g,k)\in U$. But then $hk^{-1}\in k_2(U)$ and $h\in k_2(U)K$. This completes the proof of the lemma.
\end{proof}

For $U\le G\times H$ we denote by $\eta_U\colon p_2(U)/k_2(U)\myiso p_1(U)/k_1(U)$ the isomorphism induced by $U$ (see \cite[Lemma 2.3.25]{Bouc2010}).

\begin{lemma}\label{lem double prod}
Let $(U,\phi)\in\calM(G\times H)$, $\Phi\in\Hom(G^*,\KK^\times)$ and $\Psi\in\Hom(H^*,\KK^\times)$. Then
\begin{equation*}
   e_{(G,\Phi)}^G\cdot\left( \left[\frac{G\times H}{U,\phi}\right] \cdotH e_{(H,\Psi)}^H \right)= 0
\end{equation*}
unless $p_1(U)=G$, $p_2(U)=H$, $\phi$ has an extension $\alpha\times \beta\in (G\times H)^*$, $\Phi_{k_1(U)}=\Psi_{k_2(U)}\circ\eta_U^*$, and $m_{(H,\Phi)}^{k_2(U)}\neq 0$, in which case one has
\begin{equation*}
   e_{(G,\Phi)}^G\cdot\left( \left[\frac{G\times H}{U,\phi}\right] \cdotH e_{(H,\Psi)}^H \right)
   = \Phi(\alpha)\, \Psi(\beta)\, m_{(H,\Psi)}^{k_2(U)}\cdot e_{(G,\Phi)}^G\,.
\end{equation*}
\end{lemma}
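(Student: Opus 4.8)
The plan is to reduce the double product to a single deflation computation by using the canonical decomposition of Theorem~\ref{thm can decomp} and then applying the results of Sections~\ref{sec primitive idempotents} and \ref{sec elementary operations on primitive idempotents}. First I would observe that if $e_{(G,\Phi)}^G\cdot\bigl(\left[\frac{G\times H}{U,\phi}\right]\cdotH e_{(H,\Psi)}^H\bigr)\neq 0$, then in particular $\left[\frac{G\times H}{U,\phi}\right]\cdotH e_{(H,\Psi)}^H\neq 0$ and $e_{(G,\Phi)}^G\cdot_G\left[\frac{G\times H}{U,\phi}\right]\neq 0$; so Lemma~\ref{lem idem and ext}(a),(b) force $p_1(U)=G$, $p_2(U)=H$, and both $\phi_1$ and $\phi_2$ extend (say $\phi_1$ extends to $\alpha\in G^*$ and $\phi_2$ to $\beta\in H^*$), hence by Lemma~\ref{lem ext and fact} the whole $\phi$ extends to $\alpha\times\beta\in(G\times H)^*$. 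Using the formula from (\ref{eqn tensor product formula}) exactly as in the proof of Lemma~\ref{lem ext and fact}, one can write $\left[\frac{G\times H}{U,\phi}\right] = \tw_\alpha\cdotG\left[\frac{G\times H}{U,1}\right]\cdotH\tw_\beta$. By Proposition~\ref{prop isom and twist on idempotents}(c), the trailing $\tw_\beta$ acts on $e_{(H,\Psi)}^H$ as the scalar $\Psi(\beta)$ (note $\beta|_H=\beta$), and the leading $\tw_\alpha$, after being moved across via $\Delta(e_{(G,\Phi)}^G)\cdotG\tw_\alpha=\Phi(\alpha)\,\Delta(e_{(G,\Phi)}^G)$, contributes the scalar $\Phi(\alpha)$. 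So the whole computation collapses to evaluating $e_{(G,\Phi)}^G\cdot\bigl(\left[\frac{G\times H}{U,1}\right]\cdotH e_{(H,\Psi)}^H\bigr)$, with the prefactor $\Phi(\alpha)\Psi(\beta)$ pulled out.

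**The core computation.** For the untwisted biset $\left[\frac{G\times H}{U,1}\right]$, since $k_1(U)=\ker((\phi')_1)$ and $k_2(U)=\ker((\phi')_2)$ for the trivial $\phi'=1$, Theorem~\ref{thm can decomp} decomposes it as $\ind_G^G\cdotG\infl_{G/k_1(U)}^G\cdott{G/k_1(U)}\left[\frac{G/k_1(U)\times H/k_2(U)}{U/(k_1(U)\times k_2(U)),\bar 1}\right]\cdott{H/k_2(U)}\defl^H_{H/k_2(U)}\cdott{H}\res^H_H$, i.e. as $\infl_{G/k_1(U)}^G\cdot\,\iota\,\cdot\defl^H_{H/k_2(U)}$ where $\iota$ is the isomorphism biset induced by $\eta_U\colon H/k_2(U)\myiso G/k_1(U)$. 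Reading from the right: $\defl^H_{H/k_2(U)}$ applied to $e_{(H,\Psi)}^H$ gives, by Proposition~\ref{prop m number}(a), the scalar $m_{(H,\Psi)}^{k_2(U)}$ times $e_{(H k_2(U)/k_2(U),\,\Psi_{k_2(U)})}^{H/k_2(U)} = e_{(H/k_2(U),\,\Psi_{k_2(U)})}^{H/k_2(U)}$ (using $p_2(U)=H$). Applying $\iota=\isom_{\eta_U^{-1}}$ and Proposition~\ref{prop isom and twist on idempotents}(a) carries this to $e_{(G/k_1(U),\,\Psi_{k_2(U)}\circ\eta_U^*)}^{G/k_1(U)}$. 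Finally one applies $\infl_{G/k_1(U)}^G$ and multiplies by $e_{(G,\Phi)}^G$: by Proposition~\ref{prop s comp inf}(b) (or equivalently (\ref{eqn s and e}) together with Proposition~\ref{prop s comp inf}(a)), the product $e_{(G,\Phi)}^G\cdot\infl_{G/k_1(U)}^G(e_{(G/k_1(U),\Theta)}^{G/k_1(U)})$ equals $e_{(G,\Phi)}^G$ if $(G k_1(U)/k_1(U),\Phi_{k_1(U)})=_{G/k_1(U)}(G/k_1(U),\Theta)$ — which since $k_1(U)\trianglelefteq G$ means simply $\Phi_{k_1(U)}=\Theta=\Psi_{k_2(U)}\circ\eta_U^*$ — and equals $0$ otherwise. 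Combining the scalars yields $\Phi(\alpha)\,\Psi(\beta)\,m_{(H,\Psi)}^{k_2(U)}\cdot e_{(G,\Phi)}^G$, vanishing unless the stated conditions hold.

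**Main obstacle.** The technical crux is bookkeeping the canonical decomposition in the twisted setting: one must justify pulling $\tw_\alpha$ and $\tw_\beta$ outside before applying Theorem~\ref{thm can decomp} (which is stated for untwisted pieces after extracting the kernels as $\ker(\phi_i)$), and correctly tracking how $\Phi_{k_1(U)}$ and $\Psi_{k_2(U)}$ transform under the induced isomorphism $\eta_U$. The identity $\Delta(e_{(G,\Phi)}^G)\cdotG\tw_\alpha=\Phi(\alpha)\,\Delta(e_{(G,\Phi)}^G)$ from Proposition~\ref{prop isom and twist on idempotents}(c) is what makes the left twist harmless, and one should be slightly careful that $\left[\frac{G\times H}{U,\phi}\right]$ is a left $G$-/right $H$-biset so that the leading twist is $\tw_\alpha$ acting via $\Delta$ on the $G$-side; the formula $\left[\frac{G\times H}{U,\phi}\right]=\tw_\alpha\cdotG\left[\frac{G\times H}{U,1}\right]\cdotH\tw_\beta$ follows from (\ref{eqn tensor product formula}) as in Lemma~\ref{lem ext and fact}. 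Once this decomposition is in place, the rest is a routine chain of substitutions using the propositions of Section~\ref{sec elementary operations on primitive idempotents}. One should also record that the appearance of $m_{(H,\Phi)}^{k_2(U)}$ in the ``unless'' clause versus $m_{(H,\Psi)}^{k_2(U)}$ in the final formula is the same quantity — the superscript $k_2(U)$ refers to the normal subgroup of $H$ and the subscript should consistently be $(H,\Psi)$, since it is $e_{(H,\Psi)}^H$ that is being deflated.
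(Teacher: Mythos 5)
Your proof is correct and follows essentially the same route as the paper: extract the twists via $\left[\frac{G\times H}{U,\phi}\right]=\tw_\alpha\cdotG\left[\frac{G\times H}{U,1}\right]\cdotH\tw_\beta$, pull out $\Phi(\alpha)\Psi(\beta)$ using Proposition~\ref{prop isom and twist on idempotents}(c), decompose the untwisted biset by Theorem~\ref{thm can decomp} into inflation, the isomorphism biset for $\eta_U$, and deflation, and then apply Propositions~\ref{prop m number}(a), \ref{prop isom and twist on idempotents}(a) and \ref{prop s comp inf} to land on $\Phi(\alpha)\,\Psi(\beta)\,m^{k_2(U)}_{(H,\Psi)}\,e^G_{(G,\Phi)}$ when $\Phi_{k_1(U)}=\Psi_{k_2(U)}\circ\eta_U^*$, and on $0$ otherwise. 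The one point where you genuinely diverge is the step $p_1(U)=G$: you invoke Lemma~\ref{lem idem and ext}(b), whereas the paper uses only part (a) (giving $p_2(U)=H$ and the extension of $\phi_2$, which with Lemma~\ref{lem ext and fact} already yields the extension of $\phi$, so the separate extension of $\phi_1$ is not needed) and proves $p_1(U)=G$ directly: if $p_1(U)<G$, then $\left[\frac{G\times H}{U,\phi}\right]\cdotH e^H_{(H,\Psi)}$ lies in the span of basis elements $[V,\psi]_G$ with $V\le p_1(U)<G$, and these are annihilated by $e^G_{(G,\Phi)}$ by comparing images under the mark morphism $m_G$. Your shortcut is legitimate provided the hypothesis of Lemma~\ref{lem idem and ext}(b) is read as $\Delta(e^G_{(G,\Phi)})\cdotG\left[\frac{G\times H}{U,\phi}\right]\neq 0$; this is exactly what the nonvanishing of the triple product gives, since $e^G_{(G,\Phi)}\cdot\bigl(\left[\frac{G\times H}{U,\phi}\right]\cdotH e^H_{(H,\Psi)}\bigr)=\bigl(\Delta(e^G_{(G,\Phi)})\cdotG\left[\frac{G\times H}{U,\phi}\right]\bigr)\cdotH e^H_{(H,\Psi)}$ by \cite[Proposition~2]{CoskunYilmaz2019} and associativity --- you should state this identification explicitly rather than write ``in particular''. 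Two small slips: in the paper's convention the middle term of the canonical decomposition is $\isom_{\eta_U}$ (a morphism from $H/k_2(U)$ to $G/k_1(U)$), not $\isom_{\eta_U^{-1}}$, although the idempotent you produce from it is the correct one; and the conjugacy condition from Proposition~\ref{prop s comp inf}(b) collapses to the equality $\Phi_{k_1(U)}=\Psi_{k_2(U)}\circ\eta_U^*$ because the subgroup involved is all of $G/k_1(U)$, on which conjugation acts trivially. You are also right that $m_{(H,\Phi)}^{k_2(U)}$ in the ``unless'' clause of the statement is a typo for $m_{(H,\Psi)}^{k_2(U)}$.
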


\begin{proof}
Assume that 
\begin{equation}\label{eqn neq 0}
e_{(G,\Phi)}^G\cdot \left( \left[\frac{G\times H}{U,\phi}\right] \cdotH e_{(H,\Psi)}^H \right) \neq 0\,
\end{equation}
By Lemma~\ref{lem idem and ext}, $p_2(U)=H$ and $\phi_2$ extends to $H$. Assume $p_1(U)<G$. Then $\left[\frac{G\times H}{U,\phi}\right]\cdot e_{(H,\Psi)}^H$ is in the $\KK$-span of elements of the form $[U*L,\psi]_G$, with $L\le H$ and $\psi\in(U*L)^*$. Since $U*L\le p_1(U)<G$ we obtain $m_G(e_{(G,\Phi)}\cdot [U*L,\psi]_G) = m_G(e_{(G,\Phi)})\cdot m_G([U*L,\psi]_G) = 0$, because the first factor vanishes in the components indexed by proper subgroups of $G$ and the second factor vanishes in the $G$-component. By the injectivity of $m_G$ this implies that the element in (\ref{eqn neq 0}) vanishes, a contradiction. Thus, $p_1(U)=G$. By Lemma~\ref{lem ext and fact}, $\phi$ has an extension $\alpha\times \beta\in (G\times H)^*$. Further,
\begin{align*}
   e_{(G,\Phi)}^G\cdot \left( \left[\frac{G\times H}{U,\phi}\right] \cdotH e_{(H,\Psi)}^H \right)
   & = e_{(G,\Phi)}^G \cdot \left(\tw_\alpha \cdotG \left[\frac{G\times H}{U,1}\right]\cdotH \tw_\beta \cdotH e_{(H,\Psi)}^H\right)\\
   & = \Delta(e_{(G,\Phi)}^G) \cdotG \tw_\alpha \cdotG \left[\frac{G\times H}{U,1}\right]\cdotH \tw_\beta \cdotH e_{(H,\Psi)}^H\,,
\end{align*}
where the last equation follows from \cite[Proposition~2]{CoskunYilmaz2019}. By Proposition~\ref{prop isom and twist on idempotents}(c) we have $\tw_\beta\cdot_H e_{(H,\Psi)}^H= \tw_\beta(e_{(H,\Psi)}^H)= \Psi(\beta)\cdot e_{(H,\Psi)}^H$ and $\Delta(e_{(G,\Phi)}^G)\cdot_G \tw_\alpha=\Phi(\alpha)\cdot \Delta(e_{(G,\Phi)}^G)$. Thus, using again \cite[Proposition~2]{CoskunYilmaz2019} and Theorem~\ref{thm can decomp},
\begin{align*}
   & e_{(G,\Phi)}^G\cdot \left( \left[\frac{G\times H}{U,\phi}\right] \cdotH e_{(H,\Psi)}^H \right) 
        = \Phi(\alpha)\Psi(\beta)\cdot \Delta(e_{(G,\Phi)}^G)\cdotG \left[\frac{G\times H}{U,1}\right] \cdotH e_{(H,\Psi)}^H\\
   & = \Phi(\alpha)\, \Psi(\beta)\, e_{(G,\Phi)}\cdot \left( \infl_{G/k_1(U)}^G\cdott{G/k_1(U)} \left[\frac{G\times H}{\Ubar,1}\right]  
                  \cdott{H/k_2(U)} \defl^H_{H/k_2(U)} \cdotH e_{(H,\Psi)}^H \right)\\
   & =  \Phi(\alpha)\, \Psi(\beta)\, e_{(G,\Phi)}\cdot \left( \infl_{G/k_1(U)}^G\cdott{G/k_1(U)} \isom_{\eta_U}  
                  \cdott{H/k_2(U)} \defl^H_{H/k_2(U)} \cdotH e_{(H,\Psi)}^H \right)\,.
\end{align*}
Using Propositions~\ref{prop m number}(a), \ref{prop isom and twist on idempotents}(a) and \ref{prop s comp inf}(b), we obtain 
\begin{equation}\label{eqn still neq 0}
   e_{(G,\Phi)}^G\cdot \left( \left[\frac{G\times H}{U,\phi}\right] \cdotH e_{(H,\Psi)}^H \right)  = \Phi(\alpha)\, \Psi(\beta)\, m_{(H,\Psi)}^{k_2(U)}\, \sum_{(K,\Theta)} e_{(G,\Phi)}^G\cdot e_{(K,\Theta)}^G\,,
\end{equation}
where the sum runs over those $(K,\Theta)\in[G\backslash \calX(G)]$ satisfying $(Kk_1(U)/k_1(U),\Theta_{k_1(U)})=_{G/k_1(U)} (G/k_1(U),\Psi_{k_2(U)}\circ \eta_U^*)$. Since the term in (\ref{eqn still neq 0}) is nonzero, one of these pairs $(K,\Theta)$ must be $G$-conjugate, and then also equal, to $(G,\Phi)$. This implies the result.
\end{proof}

%\begin{corollary}\label{cor double prod}
%Let $(U,\phi)\in\calM(G\times H)$, $\Phi\in\Hom(G^*,\KK^\times)$ and $\Psi\in\Hom(H^*,\KK^\times)$ with
%\begin{equation*}
%  e_{(G,\Phi)}\cdot \left(\left[\frac{G\times H}{U,\phi}\right] \cdotH e_{(H,\Psi)}^H\right) \neq 0\,.
%\end{equation*}
%Then $p_1(U)=G$, $p_2(U)=H$, $\phi$ has an extension $\alpha\times \beta\in (G\times H)^*$ and
%\begin{equation*}
%    \Delta(e_{(G,\Phi)}^G)\cdotG\left[\frac{G\times H}{U,\phi}\right] \cdotH e_{(H,\Psi)}^H 
%    = \Phi(\alpha)\,\Psi(\beta)\, \Delta(e_{(G,\Phi)}^G)\cdotG\left[\frac{G\times H}{U,1}\right] \cdotH e_{(H,\Psi)}^H\,.
%\end{equation*}
%\begin{equation*}
%    = \Phi(\alpha)\, \Psi(\beta)\, m_{(H,\Psi)}^{k_2(U)}\, \left( e_{(G,\Phi)}^G\cdot \infl_{G/k_1(U)}^G (e_{(H/k_1(U),\Psi_{k_2(U)}\circ\eta_U^*)}^{G/k_1(U)})\right)\,.
% \end{equation*}
%\begin{equation*}
%   = \begin{cases} \Phi(\alpha)\, \Psi(\beta)\, m_{(H,\Psi)}^{k_2(U)}\, e_{(G,\Phi)}^G, & 
%                           \text{if $\Psi_{k_2(U)}\circ \eta_U^* = \Phi_{k_1(U)}$},\\
%                               0, & \text{otherwise.} 
%       \end{cases}
%\end{equation*}
%\end{corollary}

%%%%%%%%%%%%%%%%%%% SECTION 6 %%%%%%%%%%%%%%%%%%%%%%%%%%%%%%%%%%%%%

\section{The constant $m_{(G,\Phi)}^N$}\label{sec m}

Throughout this section, $G$ and $H$ denote finite groups and $\KK$ denotes a field of characteristic $0$ such that for any subquotients $S$ of $G$ and $T$ of $H$ the groups $S^*$ and $T^*$ are finite and $\KK$ is a splitting field for $S^*$ and $T^*$.

\smallskip
In this section we prove the crucial Proposition~\ref{prop G/M isomorphic G/N} stating that $m_{(G,\Phi)}^M=m_{(G,\Phi)}^N$ if $(G/M,\Phi_M)\cong(G/N,\Phi_N)$ (see Definition~\ref{def BA-pair}(b)).

\begin{proposition}\label{prop trans m formula}
Let $N$ and $M$ be normal subgroups of $G$ with $N\le M$ and let $\Phi\in\Hom(G^*,\KK^\times)$. Then
\begin{equation*}
   m_{(G,\Phi)}^M = m_{(G,\Phi)}^N \cdot m_{(G/N,\Phi_N)}^{M/N}\,.
\end{equation*}
\end{proposition}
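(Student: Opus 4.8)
The natural approach is to exploit the functoriality of deflation: since $N \trianglelefteq M \trianglelefteq G$ with $N \le M$, the deflation map $\defl^G_{G/M}$ factors (up to the usual identifications) as $\defl^{G/N}_{G/M} \circ \defl^G_{G/N}$ in the Green biset functor $B^A_\KK$. Concretely, using the canonical isomorphism $(G/N)\big/(M/N) \cong G/M$, one has in $\calC^A_\KK$ the composition relation $\defl^G_{G/M} = \defl^{G/N}_{(G/N)/(M/N)} \cdot_{G/N} \defl^G_{G/N}$, which is one of the standard biset relations (see \cite[1.1.3]{Bouc2010} and its verification in \cite{BoltjeRaggiValero2019}). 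The plan is to apply both sides to the primitive idempotent $e^G_{(G,\Phi)}$ and compare with Proposition~\ref{prop m number}(a).

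First I would write $\defl^G_{G/M}(e^G_{(G,\Phi)}) = m^M_{(G,\Phi)} \cdot e^{G/M}_{(G/M, \Phi_M)}$ by Proposition~\ref{prop m number}(a), noting that since $p_1(G)=G$ we have $GN/N = G/N$ and $GM/M = G/M$, so the subquotient bookkeeping collapses. On the other side, apply $\defl^G_{G/N}$ first to get $m^N_{(G,\Phi)} \cdot e^{G/N}_{(G/N, \Phi_N)}$, again by Proposition~\ref{prop m number}(a). Then apply $\defl^{G/N}_{(G/N)/(M/N)}$ to $e^{G/N}_{(G/N,\Phi_N)}$; by Proposition~\ref{prop m number}(a) applied to the group $G/N$ with normal subgroup $M/N$, this equals $m^{M/N}_{(G/N,\Phi_N)} \cdot e^{(G/N)/(M/N)}_{((G/N)/(M/N), (\Phi_N)_{M/N})}$. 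The remaining point is to check that under the canonical isomorphism $(G/N)/(M/N) \cong G/M$ the character $(\Phi_N)_{M/N}$ corresponds to $\Phi_M$ — this is a routine diagram chase using the explicit description of $\Phi_N$ in Proposition~\ref{prop s comp inf}(a) together with the transitivity of the pullback maps $\nu^*, \alpha^*$ along the tower $G \to G/N \to G/M$. Combining, the left side gives $m^M_{(G,\Phi)} \cdot e^{G/M}_{(G/M,\Phi_M)}$ and the right side gives $m^N_{(G,\Phi)} \cdot m^{M/N}_{(G/N,\Phi_N)} \cdot e^{G/M}_{(G/M,\Phi_M)}$ (after transporting along the canonical isomorphism, which sends idempotent to idempotent by Proposition~\ref{prop isom and twist on idempotents}(a)). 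Since $e^{G/M}_{(G/M,\Phi_M)} \neq 0$, comparing scalars yields the claimed identity.

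The main obstacle I anticipate is purely bookkeeping rather than conceptual: carefully justifying the factorization $\defl^G_{G/M} = \defl \circ \defl$ as an identity of morphisms in $\calC^A_\KK$ (not merely as maps on the level of $B^A$-elements), and then tracking the canonical isomorphism $(G/N)/(M/N) \cong G/M$ consistently through the idempotent labels $(\Phi_N)_{M/N}$ versus $\Phi_M$. An alternative to the composition-of-deflations route would be to argue directly from the explicit formula in Proposition~\ref{prop m number}(b): expand $m^N_{(G,\Phi)} \cdot m^{M/N}_{(G/N,\Phi_N)}$ as a double sum over $K \le G$ with $KN=G$ and $\bar{K} \le G/N$ with $\bar{K}(M/N) = G/N$, reparametrize the inner sum by preimages $K' $ with $K \le K' \le G$, $K'M = G$, and use the multiplicativity of the Möbius function together with the identity $\sum_{K \le K' \le G, \, KN = G} |K|\,\mu(K,K') = $ (a factor supported on $K'$ with $K' \cap N \le$ suitable subgroup) to collapse to the single sum defining $m^M_{(G,\Phi)}$. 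This combinatorial route avoids the categorical factorization but replaces it with a more delicate Möbius-inversion computation; I would present the deflation-composition argument as the clean proof and relegate the combinatorial identity to a remark if needed.
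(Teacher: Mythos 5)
Your proposal is correct and follows essentially the same route as the paper: the paper's proof is exactly the identity $\isom_f\circ\defl^{G/N}_{(G/N)/(M/N)}\circ\defl^G_{G/N}=\defl^G_{G/M}$ (with $f$ the canonical isomorphism) applied to $e_{(G,\Phi)}^G$, combined with Proposition~\ref{prop m number}(a). Your bookkeeping of $(\Phi_N)_{M/N}$ versus $\Phi_M$ via Proposition~\ref{prop isom and twist on idempotents}(a) is the same (routine) verification the paper leaves implicit.
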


\begin{proof}
This follows immediately from Proposition~\ref{prop m number}(a) and applying $\isom_f\circ\defl^{G/N}_{(G/N)/(M/N)}\circ\defl^G_{G/N}=\defl^G_{G/M}$ to $e_{(G,\Phi)}^G$, where $f\colon (G/N)/(M/N)\to G/M$ is the canonical isomorphism.
\end{proof}

\begin{lemma}\label{lem composing with Phi}
Let $f_1,f_2\colon G\to H$ be group homomorphisms, let $\Phi\in\Hom(G^*,\KK^\times)$, and let $K\le G$ be such that $\Phi|_{K^\perp}=1$ and $f_1|_K=f_2|_K$. Then $\Phi\circ f_1^*=\Phi\circ f_2^*\in\Hom(H^*,\KK^\times)$.
\end{lemma}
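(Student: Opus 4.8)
The statement is a direct consequence of the definition of $K^\perp$ together with the standing assumption that $A$ is abelian. Recall that $f_i^{*}\colon H^{*}\to G^{*}$ is the homomorphism $\psi\mapsto\psi\circ f_i$, that $G^{*}=\Hom(G,A)$ is an abelian group under pointwise multiplication, and that here $K^{\perp}$ denotes $\{\phi\in G^{*}\mid\phi|_{K}=1\}$, the analogue for $G$ of the subgroup of $H^{*}$ appearing in Theorem~\ref{thm idempotent formula}. Both $\Phi\circ f_1^{*}$ and $\Phi\circ f_2^{*}$ are group homomorphisms $H^{*}\to\KK^{\times}$, so the plan is simply to check that they agree on an arbitrary element $\psi\in H^{*}$.

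Concretely, I would fix $\psi\in H^{*}$ and introduce the element $\chi:=(\psi\circ f_1)\cdot(\psi\circ f_2)^{-1}\in G^{*}$; the whole argument then reduces to showing $\chi\in K^{\perp}$. For this one observes that for $k\in K$ the hypothesis $f_1|_{K}=f_2|_{K}$ gives $f_1(k)=f_2(k)$, hence $\chi(k)=\psi(f_1(k))\,\psi(f_2(k))^{-1}=1$, so that $\chi|_{K}=1$, i.e.\ $\chi\in K^{\perp}$. The hypothesis $\Phi|_{K^{\perp}}=1$ then yields $\Phi(\chi)=1$, and since $\Phi$ is multiplicative this rewrites as $\Phi(\psi\circ f_1)=\Phi(\psi\circ f_2)$, that is, $(\Phi\circ f_1^{*})(\psi)=(\Phi\circ f_2^{*})(\psi)$. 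As $\psi$ was arbitrary, $\Phi\circ f_1^{*}=\Phi\circ f_2^{*}$.

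There is no genuine obstacle here; the only points worth making explicit in the write-up are that pointwise products and inverses of homomorphisms $G\to A$ are again homomorphisms (so that $\chi$ really lies in $G^{*}$), which is exactly where commutativity of $A$ enters, and the reading of $K^{\perp}$ as a subgroup of $G^{*}$ rather than of $H^{*}$. Everything else is contained in the two one-line computations above.
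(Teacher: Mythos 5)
Your proof is correct and is essentially the same as the paper's: both fix $\psi\in H^*$, form $(\psi\circ f_1)\cdot(\psi\circ f_2)^{-1}$, note it lies in $K^\perp$ because $f_1|_K=f_2|_K$, and conclude from $\Phi|_{K^\perp}=1$ together with multiplicativity of $\Phi$. No discrepancies to report.
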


\begin{proof}
Let $\lambda\in H^*$. Then $(\Phi\circ f_1^*)(\lambda)=(\Phi\circ f_2^*)(\lambda)$ if and only if $\Phi(\lambda\circ f_1)=\Phi(\lambda\circ f_2)$ which in turn is equivalent to $\Phi((\lambda\circ f_1)\cdot(\lambda\circ f_2)^{-1})=1$. But, since $f_1|_K=f_2|_K$, we have $(\lambda\circ f_1)\cdot(\lambda\circ f_2)^{-1}\in K^\perp$ and since $\Phi|_{K^\perp}=1$ the result follows.
\end{proof}

\begin{proposition}\label{prop m for M and N}
For $\Phi\in \Hom(G^*,\KK^\times)$ and normal subgroups $M$ and $N$ of $G$ one has
\begin{equation*}
  m_{(G,\Phi)}^M = \frac{1}{|G|\cdot|G^*|} \sum_{\substack{ K\le G\\ KN=KM=G \\ \Phi|_{K^\perp}=1}}
      |K|\, \mu(K,G)\, |\Sigma^K_{M,N}|\, m_{(G/N,\Phi_N)}^{(K\cap M)N/N}\,,
\end{equation*}
where $\Sigma^K_{M,N}$ is the set of elements $\phi\in G^*$ such that $\phi|_{K\cap M\cap N}=1$ and such that there exists $\psi\in ((G/M)\times (G/N))^*$ with $\psi(kM,kN)=\phi(k)$ for all $k\in K$.
\end{proposition}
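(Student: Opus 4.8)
The plan is to derive the formula by computing $m_{(G,\Phi)}^M = s_{(G/M,\Phi_M)}^{G/M}(\defl^G_{G/M}(e_{(G,\Phi)}^G))$ (Proposition~\ref{prop m number}(a)), but organizing the computation so that it is routed through the quotient $G/N$. The entry point is the identity $e_{(G,\Phi)}^G = e_{(G,\Phi)}^G\cdot\infl_{G/N}^G(e_{(G/N,\Phi_N)}^{G/N})$: by Proposition~\ref{prop s comp inf}(b) the right-hand factor is a sum of primitive idempotents $e_{(H,\Phi')}^G$ which includes $e_{(G,\Phi)}^G$ (the summand indexed by $(H,\Phi')=(G,\Phi)$), so multiplying by the idempotent $e_{(G,\Phi)}^G$ leaves it unchanged. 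I would then substitute the explicit idempotent formula~(\ref{eqn idempotent formula 1}), $e_{(G,\Phi)}^G = \frac{1}{|G|}\sum_{K\le G}|K|\,\mu(K,G)\,\ind_K^G([K,\res^G_K(e_\Phi^G)]_K)$ (using $N_G(G,\Phi)=G$), and apply the Frobenius relation $\ind_K^G(a)\cdot b = \ind_K^G(a\cdot\res^G_K(b))$ with $b=\infl_{G/N}^G(e_{(G/N,\Phi_N)}^{G/N})$. This reduces $\defl^G_{G/M}(e_{(G,\Phi)}^G)$ to a sum over $K\le G$ of $\defl^G_{G/M}\circ\ind_K^G$ applied to $[K,\res^G_K(e_\Phi^G)]_K\cdot\res^G_K(\infl_{G/N}^G(e_{(G/N,\Phi_N)}^{G/N}))$.

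Next I would invoke the two Mackey-type factorizations in the biset category — consequences of the composition formula~(\ref{eqn tensor product formula}) and Theorem~\ref{thm can decomp} — namely $\res^G_K\circ\infl^G_{G/N} = \infl^K_{K/(K\cap N)}\circ\isom\circ\res^{G/N}_{KN/N}$ and $\defl^G_{G/M}\circ\ind^G_K = \ind^{G/M}_{KM/M}\circ\isom\circ\defl^K_{K/(K\cap M)}$. By Proposition~\ref{prop s comp res}(c), $\res^{G/N}_{KN/N}(e_{(G/N,\Phi_N)}^{G/N})$ vanishes unless $KN=G$; upon finally applying $s_{(G/M,\Phi_M)}^{G/M}=s_{\Phi_M}^{G/M}\circ\pi_{G/M}$, the composite $\pi_{G/M}\circ\ind^{G/M}_{KM/M}$ (Lemma~\ref{lem pi and e}) vanishes unless $KM=G$; and the term for $K$ vanishes outright unless $\Phi|_{K^\perp}=1$, by~(\ref{eqn idempotent formula 2}). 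Thus only subgroups $K$ with $KM=KN=G$ and $\Phi|_{K^\perp}=1$ survive, which accounts both for the index set of the asserted sum and for the prefactor $\frac{1}{|G|}|K|\mu(K,G)$. For such $K$ the surviving inner contribution is obtained by applying $\defl^K_{K/(K\cap M)}$ to the product of $[K,\res^G_K(e_\Phi^G)]_K$ with $\infl^K_{K/(K\cap N)}$ of the idempotent $e_{(G/N,\Phi_N)}^{G/N}$ transported along $K/(K\cap N)\cong KN/N=G/N$, and then reading off the relevant $s^{K/(K\cap M)}_{(K/(K\cap M),-)}$-coefficient, using also $K/(K\cap M)\cong KM/M=G/M$. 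Analyzing $\defl^K_{K/(K\cap M)}\circ\infl^K_{K/(K\cap N)}$ by the double-coset formula, its identity-double-coset term corresponds, under these isomorphisms, to $\defl^{G/N}_{(G/N)/((K\cap M)N/N)}$ — note $(K\cap M)N/N\trianglelefteq G/N$ precisely because $K\cap M\trianglelefteq K$ and $KN=G$ — so that, via Propositions~\ref{prop m number}(a), \ref{prop isom and twist on idempotents} and (if convenient) \ref{prop trans m formula}, the deflation scalar $m_{(G/N,\Phi_N)}^{(K\cap M)N/N}$ emerges.

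The remaining, and principal, work is the character bookkeeping, where the fibered setting genuinely complicates matters: the factor $[K,\res^G_K(e_\Phi^G)]_K$ contributes a $\KK$-linear combination of twists $\tw_\phi$, $\phi\in G^*$, with coefficients $\frac{1}{|G^*|}\Phi(\phi^{-1})$ (cf.~(\ref{eqn idempotent 1})), and these twists must be pushed through $\infl^K_{K/(K\cap N)}$ and $\defl^K_{K/(K\cap M)}$; since $\phi$ need not be trivial on $K\cap N$ or on $K\cap M$ — exactly the technical difficulty flagged in the introduction — several twisted contributions survive with a shifted supporting subgroup. Carrying the bookkeeping through, one finds that the characters $\phi\in G^*$ that ultimately contribute are precisely those whose restriction $\phi|_K$ factors as $(\psi_1\circ\nu_M)|_K\cdot(\psi_2\circ\nu_N)|_K$ for some $\psi_1\in(G/M)^*$, $\psi_2\in(G/N)^*$ (where $\nu_M\colon G\to G/M$, $\nu_N\colon G\to G/N$ are the quotient maps) — equivalently, those $\phi$ admitting $\psi\in((G/M)\times(G/N))^*$ with $\psi(kM,kN)=\phi(k)$ for all $k\in K$ — and that for each of them the accumulated character value collapses to $1$ because $\Phi|_{K^\perp}=1$; their number is by definition $|\Sigma^K_{M,N}|$. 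Finally, the leftover $\frac{1}{|G^*|}$ together with the $\frac{1}{|(G/N)^*|}$ hidden in the normalization of $e_{(G/N,\Phi_N)}^{G/N}$ (and absorbed into $m_{(G/N,\Phi_N)}^{(K\cap M)N/N}$) combine to give the overall constant $\frac{1}{|G|\,|G^*|}$. I expect this last step to be the main obstacle; everything preceding it is formal once the two factorizations are in hand. As an alternative route one can avoid the biset manipulations entirely: expand $m_{(G,\Phi)}^M$ and every $m_{(G/N,\Phi_N)}^{(K\cap M)N/N}$ by Proposition~\ref{prop m number}(b), translate subgroups $\bar L\le G/N$ to subgroups $N\le L\le G$ (so that $|\bar L|=|L|/|N|$, $\mu(\bar L,G/N)=\mu(L,G)$, and $\bar L^{\perp}$ corresponds to $L^{\perp}$), and reorganize the resulting double sum over the subgroup lattice of $G$ by Möbius inversion — the combinatorial core, matching the orders $|K^{\perp}|$, $|((K\cap M)N)^{\perp}|$ and $|\Sigma^K_{M,N}|$, being the same, and the burden again falling on the character-counting identity.
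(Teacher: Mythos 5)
Your overall skeleton is in fact the same as the paper's: the paper also evaluates the sandwich $e_{(G/M,\Phi_M)}^{G/M}\cdot\bigl(\defl^G_{G/M}\cdotG\Delta(e_{(G,\Phi)}^G)\cdotG\infl_{G/N}^G\cdott{G/N}e_{(G/N,\Phi_N)}^{G/N}\bigr)$ in two ways, expands $e_{(G,\Phi)}^G$ by the explicit idempotent formula, and obtains the restriction to subgroups $K$ with $KM=KN=G$ and $\Phi|_{K^\perp}=1$ in essentially the way you describe. The problem is that the step you postpone with ``carrying the bookkeeping through, one finds\dots'' is not bookkeeping: it is the entire content of the proposition, and the tools you cite do not produce it.

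Concretely, because the twist $[K,\phi|_K]$ sits \emph{between} the deflation and the inflation, and $\phi$ need not be trivial on $K\cap M$ or $K\cap N$, you may not commute $\defl^K_{K/(K\cap M)}$ past $\infl^K_{K/(K\cap N)}$ by the ordinary defl--infl relation and then peel off a scalar via Proposition~\ref{prop m number}(a); that manoeuvre is only valid for the untwisted biset. The correct object is the composite $\defl^G_{G/M}\cdotG\Delta([K,\phi|_K]_G)\cdotG\infl^G_{G/N}$, which by (\ref{eqn tensor product formula}) is the single transitive fibered biset $\left[\frac{G/M\times G/N}{D,\phibar}\right]$ with $D=\{(kM,kN)\mid k\in K\}$ and $\phibar(kM,kN)=\phi(k)$ when $\phi|_{K\cap M\cap N}=1$, and $0$ otherwise. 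Evaluating $e_{(G/M,\Phi_M)}^{G/M}\cdot\bigl(\left[\frac{G/M\times G/N}{D,\phibar}\right]\cdott{G/N}e_{(G/N,\Phi_N)}^{G/N}\bigr)$ is exactly Lemma~\ref{lem double prod}: it proves (i) the term vanishes unless $\phibar$ extends to $((G/M)\times(G/N))^*$, i.e.\ unless $\phi\in\Sigma^K_{M,N}$ --- and this vanishing direction rests on the nontrivial induction of Lemma~\ref{lem idem and ext} together with Lemma~\ref{lem ext and fact}, neither of which your argument replaces; and (ii) when nonzero, the value is $\Phi_M(\alpha)\Phi_N(\beta)\,m_{(G/N,\Phi_N)}^{(K\cap M)N/N}\,e_{(G/M,\Phi_M)}^{G/M}$, after which $\Phi(\phi^{-1})\Phi_M(\alpha)\Phi_N(\beta)=1$ follows from $\Phi|_{K^\perp}=1$. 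Your sketch asserts precisely these conclusions without an argument; moreover, pushed literally, your route would involve deflating idempotents $e^K_{(H,\Theta)}$ with $H<K$ (for which the paper gives no scalar formula) and would need a separate argument that such terms die in the final $s$-evaluation. So either prove a statement equivalent to Lemma~\ref{lem double prod} or invoke it explicitly; as written the proposal has a genuine gap at its core, and the same gap (the character-counting identity) is deferred, not removed, in your alternative M\"obius-inversion route.
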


\begin{proof}
Consider the element
\begin{equation*}
     v:= e_{(G/M,\Phi_M)}^{G/M} \cdot 
     \left( \defl^G_{G/M} \cdotG \Delta(e_{(G,\Phi)}^G) \cdotG \infl_{G/N}^G \cdott{G/N} e_{(G/N,\Phi_N)}^{G/N} \right) 
     \in B_\KK^A(G/M)\,.
\end{equation*}
Then, on the one hand,
\begin{equation}\label{eqn v 1}
   v=m_{(G,\Phi)}^M e_{(G/M,\Phi_M)}^{G/M}\in B^A_\KK(G/M)\,.
\end{equation}
In fact, by \cite[Proposition~2]{CoskunYilmaz2019} and Proposition~\ref{prop s comp inf}(b),
\begin{equation*}
   \Delta(e_{(G,\Phi)}^G)\cdotG\infl_{G/N}^G\cdott{G/N} e_{(G/N,\Phi_N)}^{G/N} 
   = e_{(G,\Phi)}^G \cdot (\infl_{G/N}^G\cdott{G/N} e_{(G/N,\Phi_N)}^{G/N}) = e_{(G,\Phi)}^G\,,
\end{equation*}
and then Proposition~\ref{prop m number}(a) implies (\ref{eqn v 1}). On the other hand, using the formula in (\ref{eqn idempotent formula 3}) for $e_{(G,\Phi)}^G$ we obtain
\begin{equation}\label{eqn v 2}
   v=\frac{1}{|G|\cdot|G^*|} \sum_{\substack{K\le G \\ \Phi|_{K^\perp}=1}}\, \sum_{\phi \in G^*} 
   |K|\, \mu(K,G)\, \Phi(\phi^{-1})\,
    e_{(G/M,\Phi_M)}^{G/M} \cdot \left( x_{K,\phi} \cdott{G/N} e_{(G/N,\Phi_N)}^{G/N} \right)
\end{equation}
with
\begin{equation*}
  x_{K,\phi}:= \defl^G_{G/M} \cdotG \Delta([K,\phi|_K]_G) \cdotG \infl_{G/N}^G %\in B^A_{\KK}(G/M, G/N)
  = \begin{cases} \left[\frac{G/M\times G/N}{\Delta_{M,N}^K,\phibar}\right], & \text{if $\phi|_{K\cap M\cap N}=1$,}\\
                                    0, & \text{otherwise,}  
             \end{cases}
\end{equation*}
for $K\le G$ with $\Phi|_{K^\perp}=1$, by the formula in (\ref{eqn tensor product formula}),
%\begin{equation*}
%   x_{K,\phi} = \begin{cases} \left[\frac{G/M\times G/N}{\Delta_{M,N}^K,\phibar}\right], & \text{if $\phi|_{K\cap M\cap N}=1$,}\\
%                                    0, & \text{otherwise,}  
%             \end{cases}
%\end{equation*}
where, in the first case, $\Delta_{M,N}^K:=\{(kM,kN)\mid k\in K\}$  and $\phibar((kM,kN):=\phi(k)$ for $k\in K$. 
Note that $A_K:=k_1(\Delta_{M,N}^K)=(K\cap N)M/M$, $B_K:=k_2(\Delta_{M,N}^K)=(K\cap M)N/N$, $p_1(\Delta_{M,N}^K)= KM/M$, and $p_2(\Delta_{M,N}^K)=KN/N$.
Lemma~\ref{lem double prod} implies that, if the term $e_{(G/M,\Phi_M)}^{G/M}\cdot (x_{K,\phi} \cdott{G/N} e_{(G/N,\Phi_N)}^{G/N})$ in (\ref{eqn v 2}) is nonzero then $KM=G=KN$, $\phi|_{K\cap M\cap N}=1$, and $\phibar$ extends to some $\psi=\alpha\times \beta\in((G/M)\times (G/N))^*$.
%and 
%\begin{equation}\label{eqn Phi Psi condition}
%    (\Phi_M)_{A_K} = (\Phi_N)_{B_K}\circ\eta^*\,,
%\end{equation} 
%where $\eta:=\eta_{\Delta_{M,N}^K}\colon (G/N)/B_K\myiso (G/M)/A_K$. Note first that the condition in (\ref{eqn Phi Psi condition}) is satisfied for all $K$ in the first sum in (\ref{eqn v 3}). In fact, by Lemma~\ref{lem composing with Phi} it suffices to show that the two homomorphisms $f_1\colon G\to G/M \to (G/M)/A_K$ and $f_2\colon G\to G/N\to (G/N)/B_K$ satisfy $f_1(k)=\eta(f_2(k))$ for all $k\in K$.
%But this is clear, since $\eta$ maps the coset of $kN$ in $(G/N)/A_K$ to the coset of $kM$ in $(G/M)/A_K$. 
Then the formula in Lemma~\ref{lem double prod} yields
\begin{align}\notag
  v & =\frac{1}{|G|\cdot|G^*|} \sum_{\substack{K\le G\\ KM= G=KN\\ \Phi|_{K^\perp}=1}}\
           \sum_{\phi\in \Sigma^K_{M,N}} |K|\,\mu(K,G)\,
           \Phi(\phi^{-1})\, \Phi_M(\alpha_{K,\phi})\, \Phi_N(\beta_{K,\phi})\, m_{(G/N,\Phi_N)}^{B_K} \cdot \\ \label{eqn v 3}
   & \qquad    \cdot e_{(G/M,\Phi_M)}^{G/M}\,,
\end{align}
where, for $K$ and $\phi$ as above, $\alpha_{K,\phi}\in (G/M)^*$ and $\beta_{K,\psi}\in (G/N)^*$ are chosen such that $\alpha_{K,\phi}\times \beta_{K,\phi}$ is an extension of $\phibar$. Denoting the inflations of $\alpha_{K,\psi}$ and $\beta_{K,\psi}$ to $G$ by $\alphatilde_{K,\psi}\in$ and $\betatilde_{K,\psi}$, we have
\begin{equation*}
   \Phi(\phi^{-1})\Phi_M(\alpha_{K,\phi})\Phi_N(\beta_{K,\phi}) = \Phi(\phi^{-1}\alphatilde_{K,\phi}\betatilde_{K,\phi})=1
\end{equation*}
since $\alphatilde_{K,\phi}(k)\betatilde_{K,\phi}(k) = \alpha_{K,\phi}(kM)\beta_{K,\phi}(kN) = \phibar(kM,kN)=\phi(k)$ for all $k\in K$ and $\Phi|_{K^\perp}=1$. Thus,
\begin{equation}\label{eqn v 4}
   v = \frac{1}{|G|\cdot|G^*|} \sum_{\substack{K\le G\\ KM= G=KN\\ \Phi|_{K^\perp}=1}}
   |\Sigma^K_{M,N}|\, |K|\,\mu(K,G)\, m_{(G/N,\Phi_N)}^{B_K}\,e_{(G/M,\Phi_M)}^{G/M}\,.
\end{equation}
Comparing Equations~(\ref{eqn v 1}) and (\ref{eqn v 4}), the formula for $m_{(G,\Phi)}^M$ follows.
\end{proof}

\begin{proposition}\label{prop G/M isomorphic G/N}
Let $M$ and $N$ be normal subgroups of $G$ such that there exists an isomorphism $f\colon G/N\myiso G/M$ and let $\Phi\in \Hom(G^*,\KK^\times)$ be such that $\Phi_N\circ f^*=\Phi_M \in\Hom((G/M)^*,\KK^\times)$. Then one has $m_{(G,\Phi)}^M=m_{(G,\Phi)}^N$.
\end{proposition}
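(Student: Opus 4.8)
The plan is to exploit the symmetry between $M$ and $N$ provided by the isomorphism $f\colon G/N\myiso G/M$ together with the recursion in Proposition~\ref{prop m for M and N}, and then argue by induction on $|G/M|=|G/N|$. The base case $M=N=G$ is trivial since then $m_{(G,\Phi)}^M=m_{(G,\Phi)}^N$ tautologically (both equal $m_{(G,\Phi)}^{G,G}$, which one checks directly from the definition equals $1$). So assume $M,N$ are proper and that the statement holds for all proper normal subgroups of $G$ whose quotients are smaller, as well as — by Proposition~\ref{prop trans m formula} — for all proper quotients of $G$.

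First I would write down Proposition~\ref{prop m for M and N} twice: once in the stated form for $m_{(G,\Phi)}^M$, and once with the roles of $M$ and $N$ interchanged for $m_{(G,\Phi)}^N$. The key observation is that the indexing set $\{K\le G\mid KM=KN=G,\ \Phi|_{K^\perp}=1\}$ and the weights $|K|\mu(K,G)$ and $|\Sigma^K_{M,N}|$ are all symmetric in $M$ and $N$ — the set $\Sigma^K_{M,N}$ depends only on the pair $(K\cap M, K\cap N)$ and the condition $\phi|_{K\cap M\cap N}=1$, both manifestly symmetric. So the two expressions differ only in the factors $m_{(G/N,\Phi_N)}^{(K\cap M)N/N}$ versus $m_{(G/M,\Phi_M)}^{(K\cap N)M/M}$. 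Thus it suffices to show, for each $K$ appearing in the sum, that
\begin{equation*}
  m_{(G/N,\Phi_N)}^{(K\cap M)N/N} = m_{(G/M,\Phi_M)}^{(K\cap N)M/M}\,.
\end{equation*}
Now $KN=G$ forces $(K\cap M)N/N$ to be a \emph{proper} normal subgroup of $G/N$ (it is proper because $K\ne G$ would be needed... more precisely because if $(K\cap M)N=G$ then combined with $KM=G$ one would get $K=G$, contradicting properness of $K$ — here one uses a Dedekind-type argument since $N\le M$ is \emph{not} assumed, so this step needs care; actually the relevant point is simply that the exponent drops). The plan is then to identify, via $f$, the quotient $(G/N)/((K\cap M)N/N)$ with $(G/M)/((K\cap N)M/M)$ compatibly with the characters $\Phi_N$ and $\Phi_M$, and invoke the inductive hypothesis (applied inside $G/N$, whose order is strictly smaller, using Proposition~\ref{prop trans m formula} to relate $m_{(G/N,\Phi_N)}^{(K\cap M)N/N}$ to a constant of the form $m_{(G/N',\ldots)}^{\cdots}$).

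The hard part, and the main obstacle, will be establishing that the isomorphism $f$ actually induces a \emph{matching} bijection on the summation sets together with the required identifications of the subquotients $(K\cap M)N/N$ and characters. The subtlety is that $f$ does not in general come from an automorphism of $G$, so a subgroup $K\le G$ with $KM=KN=G$ does not obviously correspond to another such subgroup under $f$; one has to work with the preimages in $G$ of $f(KN/N)=$ (some subgroup of $G/M$) and check that $\Phi_N\circ f^*=\Phi_M$ propagates to the relevant smaller subquotients. One route around this: rather than matching term-by-term, rewrite both full sums as sums over the \emph{common} object $G/(M\cap N)$ or over subgroups of $G/N\cong G/M$ directly, using that $m_{(G,\Phi)}^M$ depends, by Propositions~\ref{prop trans m formula} and \ref{prop m number}(b), only on the isomorphism type of the pair $(G/M,\Phi_M)$ among quotients — but that is precisely what we are trying to prove, so one must be careful not to argue in a circle; the induction on $|G/M|$ is what breaks the circularity, since in the recursion the relevant constants live over strictly smaller groups. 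I would carry this out by choosing, for each $K$ in the sum for $m_{(G,\Phi)}^M$, the subgroup $K':=$ preimage in $G$ of $f^{-1}((KM/M)\cdot(\text{something}))$... and verifying $K'M=K'N=G$, $\Phi|_{(K')^\perp}=1$, $|\Sigma^{K}_{M,N}|=|\Sigma^{K'}_{M,N}|$, and that the two $m$-constants agree by induction. Once the term-by-term (or set-by-set) correspondence is in place, comparing the two instances of Proposition~\ref{prop m for M and N} gives $m_{(G,\Phi)}^M=m_{(G,\Phi)}^N$ immediately.
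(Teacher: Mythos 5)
Your overall strategy is the paper's: write the formula of Proposition~\ref{prop m for M and N} twice with $M$ and $N$ interchanged, note $\Sigma^K_{M,N}=\Sigma^K_{N,M}$, and reduce to the termwise identity $m_{(G/N,\Phi_N)}^{(K\cap M)N/N}=m_{(G/M,\Phi_M)}^{(K\cap N)M/M}$. But that termwise identity is the heart of the proof, and your proposal does not establish it --- you yourself call it ``the hard part'' and your construction trails off with a placeholder ``(something)''. The actual argument needs two concrete ingredients you are missing. First, transport along $f$: from the explicit formula in Proposition~\ref{prop m number}(b), the summand for $L\le G/N$ equals the summand for $f(L)\le G/M$ (using $\Phi_N\circ f^*=\Phi_M$ to match the conditions $\Phi_N|_{L^\perp}=1$ and $\Phi_M|_{f(L)^\perp}=1$), so $m_{(G/N,\Phi_N)}^{X}=m_{(G/M,\Phi_M)}^{f(X)}$ for $X=(K\cap M)N/N$; note this is a direct computation, not an appeal to the statement being proved, so no circularity arises. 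Second, one is left comparing two constants over the \emph{same} group $G/M$, namely $m_{(G/M,\Phi_M)}^{f(X)}$ and $m_{(G/M,\Phi_M)}^{(K\cap N)M/M}$, and here one must produce an isomorphism $f_1\colon (G/M)/f(X)\myiso (G/M)/((K\cap N)M/M)$ with $(\Phi_M)_{f(X)}\circ f_1^*=(\Phi_M)_{(K\cap N)M/M}$; the paper takes $f_1=\eta\circ\fbar^{-1}$, where $\fbar$ is induced by $f$ and $\eta=\eta_{\Delta^K_{M,N}}$ is the Goursat-type isomorphism induced by $\Delta^K_{M,N}=\{(kM,kN)\mid k\in K\}$, and the character compatibility is exactly where $\Phi|_{K^\perp}=1$ enters, via Lemma~\ref{lem composing with Phi} applied to two homomorphisms that agree on $K$. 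Without constructing this isomorphism and checking the character condition, the induction has nothing to apply to; your worry that ``$f$ does not come from an automorphism of $G$'' is resolved precisely by this Goursat isomorphism attached to $K$, not by choosing preimage subgroups $K'$.

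Two smaller points. Your induction variable $|G/M|$ is shakier than the paper's induction on $|G|$: after the reduction, the relevant pair of normal subgroups lives in $G/M$, and the new quotient $(G/M)/f((K\cap M)N/N)\cong G/(K\cap M)N$ need not be strictly smaller than $G/M$ (it has the same order exactly when $K\cap N\le M$, in which case both normal subgroups are trivial), so you would have to treat that degenerate case separately; inducting on $|G|$ and disposing of $M=N=\{1\}$ at the outset avoids this entirely, since $M\neq\{1\}$ gives $|G/M|<|G|$. Also, your claim that $(K\cap M)N/N$ is a proper normal subgroup of $G/N$ is false in general ($K=G$ is a legitimate term of the sum, and e.g.\ $MN=G$ gives $(G\cap M)N=G$), but properness is never needed --- what matters is only that the ambient group $G/M$ is smaller than $G$.
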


\begin{proof}
We proceed by induction on $|G|$. If $|G|=1$ the result is clearly true. So assume that $|G|>1$. If $M=N$ is the trivial subgroup of $G$ then the result holds for trivial reasons.  So assume that $M$ and $N$ are not trivial.
By Proposition~\ref{prop m for M and N} one has
\begin{equation*}
   m_{(G,\Phi)}^M = \frac{1}{|G|\cdot|G^*|} \sum_{\substack{ K\le G\\ KN=KM=G \\ \Phi|_{K^\perp}=1}}
      |K|\, \mu(K,G)\, |\Sigma^K_{M,N}|\, m_{(G/N,\Phi_N)}^{(K\cap M)N/N}
\end{equation*}
and
\begin{equation*}
     m_{(G,\Phi)}^N = \frac{1}{|G|\cdot|G^*|} \sum_{\substack{ K\le G\\ KN=KM=G \\ \Phi|_{K^\perp}=1}}
      |K|\, \mu(K,G)\, |\Sigma^K_{N,M}|\, m_{(G/M,\Phi_M)}^{(K\cap N)M/M}\,.
\end{equation*}
We will show that these sums coincide by comparing them summand by summand. Since $\Sigma^K_{M,N}=\Sigma^K_{N,M}$, it suffices to show that $m_{(G/N,\Phi_N)}^{(K\cap M)N/N}=m_{(G/M,\Phi_M)}^{(K\cap N)M/M}$. By Proposition~\ref{prop m number}(b), for any $X\trianglelefteq G/N$ we have
\begin{equation*}
   m_{(G/N,\Phi_N)}^X = \frac{|((G/N)/X)^*|}{|G/N|\cdot|(G/N)^*|} 
   \sum_{\substack{L\le G/N\\ LX=G/N\\ (\Phi_N)|_{L^\perp} = 1}} |L|\, |L^\perp||\, \mu(L,G/N)
\end{equation*}
and
\begin{equation*}
   m_{(G/M,\Phi_M)}^{ f(X)} = \frac{|((G/M)/f(X))^*|}{|G/M|\cdot|(G/M)^*|}
   \sum_{\substack{K\le G/N\\ K f(X)=G/N\\ \Phi_M|_{K^\perp}=1}} |K|\, |K^\perp|\, \mu(K,G/M)\,.
\end{equation*}
Note that the summand for $L$ in the first sum is equal to the summand for $K=f(L)$ in the second sum. Thus, with $X=(K\cap M)N/N$, we obtain
\begin{equation}\label{eqn m iterated}
   m_{(G/N,\Phi_N)}^{(K\cap M)N/N} = m_{(G/M,\Phi_M)}^{f((K\cap M)N/N)}
\end{equation}
for any $K\le G$ with $KM=G=KN$ and $\Phi|_{K^\perp}=1$. It now suffices to find an isomorphism $f_1\colon (G/M)/f((K\cap M)N/N)\myiso (G/M)/((K\cap N)M/M)$ such that $(\Phi_M)_{f((K\cap M)N/N)} \circ f_1^*= (\Phi_M)_{(K\cap N)M/M}$, since then, by induction, we have $m_{(G/M,\Phi_M)}^{f((K\cap M)N/N)}=m_{(G/M,\Phi_M)}^{(K\cap N)M/M}$ and together with Equation~(\ref{eqn m iterated}) this implies that desired equation. 
We define $f_1:= \eta\circ\fbar^{-1}$, where $\fbar\colon (G/N)/((K\cap M)N/N)\myiso (G/M)/f((K\cap M)N/N)$ is induced by $f$ and $\eta:=\eta_{\Delta^K_{M,N}}\colon (G/N)/((K\cap M)N/N)\myiso (G/M)/((K\cap N)M/M)$ is induced by $\Delta^K_{M,N}:=\{(kM,kN)\mid k\in K\}$, see \cite[Lemma 2.3.25]{Bouc2010}. It remains to prove that 
\begin{equation}\label{eqn Phi comp}
  \Phi\circ \nu_M^*\circ \nu_{f((K\cap M)N/N)}^* \circ f_1^* = \Phi\circ \nu_M^*\circ \nu_{((K\cap N)M/M)}^*\,,
\end{equation}
where $\nu_M\colon G\to G/M$, $\nu_{f(K\cap M)N/N}\colon G/M \to (G/M)/f((K\cap M)N/N)$, and $\nu_{(K\cap N)M/M}\colon G/M\to (G/M)/((K\cap N)M/M)$ denote the natural epimorphisms.
But $f_1^*=(\fbar^{-1})^*\circ\eta^*$, $\nu_{f((K\cap M)N/N)}^*\circ (\fbar^{-1})^* = (f^{-1})^*\circ \nu_{(K\cap N)M/M}^*$, and $\Phi\circ\nu_M^*\circ (f^{-1})^*=\Phi_M\circ (f^{-1})^* = \Phi_N=\Phi\circ \nu_N^*$. Thus, the left hand side of Equation~(\ref{eqn Phi comp}) is equal to $\Phi\circ\nu_N^*\circ \nu_{(K\cap M)N/N}^*\circ\eta^*$. By Lemma~\ref{lem composing with Phi} and since $\Phi|_{K^\perp}=1$, it now suffices to show that
\begin{equation*}
   (\eta\circ\nu_{(K\cap M)N/N}\circ\nu_N)|_K = (\nu_{(K\cap M)N/N}\circ\nu_N)|_K\,.
\end{equation*}
But this follows from $(kM,kN)\in\Delta_{M,N}^K$ for $k\in K$, and the proof is complete.
\end{proof}

%%%%%%%%%%%%%%%%%%% SECTION 7 %%%%%%%%%%%%%%%%%%%%%%%%%%%%%%%%%%%%%

\section{$B^A$-pairs and the subfunctors $E_{(G,\Phi)}$ of $B_{\KK}^A$}\label{sec BA-pairs and the subfunctors E}

Throughout this section we assume that $G^*$ is finite for all finite groups $G$, and we assume that $\KK$ is a field of characteristic $0$ which is a splitting field of $\KK G^*$ for all finite groups $G$. The latter is equivalent to requiring that, for any torsion element $a$ of $A$, the field $\KK$ has a root of unity whose order equals the order of $a$.

\smallskip
In this section we introduce the important subfunctors $E_{(G,\Phi)}$ of $B_\KK^A$ and study their properties.

\begin{definition}\label{def E}
For any finite group $G$ and $\Phi\in\Hom(G^*,\KK^\times)$ we denote by $E_{(G,\Phi)}$ the subfunctor of $B_\KK^A$ generated by $e_{(G,\Phi)}^G$. In other words, for each finite group $H$, one has 
\begin{equation*}
   E_{(G,\Phi)}(H)=\{x\cdot_G e_{(G,\Phi)}^G\mid x\in B_\KK^A(H,G)\}\,.
\end{equation*}
\end{definition}

\begin{proposition}\label{prop E and m}
For any finite group $G$ and $\Phi\in\Hom(G^*,\KK^\times)$, the following are equivalent:

\smallskip
{\rm (i)} If $H$ is a finite group with $E_{(G,\Phi)}(H) \neq \{0\}$ then $|G|\le |H|$.

\smallskip
{\rm (ii)} If $H$ is a finite group with $E_{(G,\Phi)}(H) \neq \{0\}$ then $G$ is isomorphic to a subquotient of $H$.

\smallskip
{\rm (iii)} For all $\{1\}\neq N\trianglelefteq G$ one has $m_{(G,\Phi)}^N=0$.

\smallskip
{\rm (iv)} For all $\{1\}\neq N\trianglelefteq G$ one has $\defl^G_{G/N}(e_{(G,\Phi)}^G) = 0$.
\end{proposition}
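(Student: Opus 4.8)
The plan is to prove the cycle of implications (iv) $\Rightarrow$ (iii) $\Rightarrow$ (ii) $\Rightarrow$ (i) $\Rightarrow$ (iv), or possibly a more efficient scheme; the equivalence of (iii) and (iv) is immediate from Proposition~\ref{prop m number}(a), since $\defl^G_{G/N}(e_{(G,\Phi)}^G) = m_{(G,\Phi)}^{G,N}\, e_{(G/N,\Phi_N)}^G$ and the idempotent $e_{(G/N,\Phi_N)}^G$ is nonzero, so the deflation vanishes precisely when the scalar $m_{(G,\Phi)}^N$ does. The implication (ii) $\Rightarrow$ (i) is trivial, since a subquotient of $H$ has order at most $|H|$.

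For (iii) $\Rightarrow$ (ii), I would start from an arbitrary nonzero element of $E_{(G,\Phi)}(H)$, which by definition has the form $x\cdot_G e_{(G,\Phi)}^G$ for some $x\in B_\KK^A(H,G)$. Expanding $x$ in the standard basis, there must be a standard basis element $\left[\frac{H\times G}{U,\phi}\right]$ with $\left[\frac{H\times G}{U,\phi}\right]\cdot_G e_{(G,\Phi)}^G\neq 0$. By Lemma~\ref{lem idem and ext}(b) this forces $p_2(U)=G$. Now using the canonical decomposition of Theorem~\ref{thm can decomp} on the right tail of $\left[\frac{H\times G}{U,\phi}\right]$, the factor $\defl^G_{G/k_2(U)}$ gets applied (after a twist) to $e_{(G,\Phi)}^G$; more precisely, writing $\left[\frac{H\times G}{U,\phi}\right] = (\cdots)\cdot_{p_2(U)/k_2(U)}\isom\cdot_{G/k_2(U)}\defl^G_{G/k_2(U)}\cdot_G\res^G_{p_2(U)}$ — but here $p_2(U)=G$, so the restriction is trivial — the nonvanishing of the whole composite forces $\defl^G_{G/k_2(U)}(e_{(G,\Phi)}^G)\neq 0$ (after also peeling off the twist $\tw_\beta$, which only contributes the nonzero scalar $\Phi(\beta)$ by Proposition~\ref{prop isom and twist on idempotents}(c)). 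By hypothesis (iii), and its equivalence with (iv) established above, this forces $k_2(U)=\{1\}$. Then $q(U)=U/(k_1(U)\times k_2(U)) = U/(k_1(U)\times\{1\})\cong p_2(U)/k_2(U) = G$, and $q(U)$ is a subquotient of $p_1(U)\le H$, namely $q(U)\cong p_1(U)/k_1(U)$. Hence $G$ is isomorphic to a subquotient of $H$, which is (ii). The same argument, run in the appropriate direction, should also deliver (i)~$\Rightarrow$~(iii): assuming $m_{(G,\Phi)}^N\neq 0$ for some nontrivial $N\trianglelefteq G$, one checks that $\defl^G_{G/N}\in B_\KK^A(G/N,G)$ applied to $e_{(G,\Phi)}^G$ is the nonzero element $m_{(G,\Phi)}^N\, e_{(G/N,\Phi_N)}^{G/N}\in E_{(G,\Phi)}(G/N)$, and since $|G/N|<|G|$ this contradicts~(i).

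I expect the main obstacle to be the bookkeeping in (iii) $\Rightarrow$ (ii): one must argue that \emph{every} standard-basis morphism $\left[\frac{H\times G}{U,\phi}\right]$ that survives multiplication by $e_{(G,\Phi)}^G$ necessarily has $k_2(U)$ trivial, and this requires combining Lemma~\ref{lem idem and ext}(b) (to get $p_2(U)=G$ and that $\phi_1$, hence $\phi$, extends), Theorem~\ref{thm can decomp} (to isolate the deflation factor $\defl^G_{G/k_2(U)}$ and the isomorphism $\isom_{\eta_U}$), and the equivalence (iii)$\Leftrightarrow$(iv) to kill nontrivial $k_2(U)$. Care is needed because the deflation sits in the \emph{middle} of the decomposition; one reduces to the deflation acting directly on $e_{(G,\Phi)}^G$ by using that inflation and isomorphism are injective ring homomorphisms on the relevant pieces, so a composite of them applied to a nonzero idempotent multiple stays nonzero only if the innermost deflation output is nonzero — alternatively, one can invoke Proposition~\ref{prop m number}(a) to see directly that $\left[\frac{H\times G}{U,\phi}\right]\cdot_G e_{(G,\Phi)}^G$ is, up to a nonzero scalar $\Phi(\beta)\, m_{(G,\Phi)}^{k_2(U)}$, a standard transfer of the primitive idempotent $e_{(G/k_2(U),\Phi_{k_2(U)})}$ of $B_\KK^A(G/k_2(U))$ up to the group $p_1(U)/k_1(U)\cong G/k_2(U)\le H$. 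This last route makes the nonvanishing condition read off directly as $m_{(G,\Phi)}^{k_2(U)}\neq 0$, so hypothesis (iii) forces $k_2(U)=\{1\}$, and the subquotient statement (ii) drops out.
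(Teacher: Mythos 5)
Your proposal is correct and takes essentially the same route as the paper: (iii)$\Leftrightarrow$(iv), (ii)$\Rightarrow$(i) and (i)$\Rightarrow$(iii)/(iv) are handled identically, and the main implication is established, as in the paper, by showing that any surviving basis element $\left[\frac{H\times G}{U,\phi}\right]$ forces $\defl^G_{G/k_2(U)}(e_{(G,\Phi)}^G)\neq 0$ after extending $\phi$ and peeling off twists (the paper merely inserts one extra preliminary application of Theorem~\ref{thm can decomp}, using (iv) to kill $\ker(\phi_2)$, before reaching the same point). The only cosmetic slips are that the relevant part of Lemma~\ref{lem idem and ext} is (a) with the two groups interchanged (yielding $p_2(U)=G$ and that $\phi_2$, not $\phi_1$, extends), and that the decomposition through $k_2(U)$ is only valid after the twists are removed via Lemma~\ref{lem ext and fact} applied to $U$ viewed inside $p_1(U)\times G$ --- points your closing paragraph does effectively acknowledge.
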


\begin{proof}
That (ii) implies (i) and that (i) implies (iv) follows from the definitions. Moreover, that (iii) and (iv) are equivalent follows from Proposition~\ref{prop m number}(a). So, it suffices to prove that (iv) implies (ii). 

Assume that (iv) holds and let $H$ be a finite group with $E_{(G,\Phi)}(H)\neq \{0\}$. By the definition of $E_{(G,\Phi)}$ this implies that there exists $(U,\phi)\in\calM(H\times G)$ such that $\left[\frac{H\times G}{U,\phi}\right]\cdot_G e_{(G,\Phi)}^G \neq 0$. The canonical decomposition of $\left[\frac{H\times G}{U,\phi}\right]$ from Theorem~\ref{thm can decomp} implies that \begin{equation*}
   \left[\frac{(P/K)\times (Q/L)}{\Ubar,\phibar}\right] \cdott{p_2(U)/\ker(\phi_2)} 
   \defl^{Q}_{Q/L} \cdott{Q}\res^G_{Q} \cdotG e_{(G,\Phi)}^G \neq 0\,,
\end{equation*}
with $P:=p_1(U)$, $K:=\ker(\phi_1)$, $Q:=p_2(U)$, $L:=\ker(\phi_2)$, $\Ubar$ corresponding to $U/(K\times L)$ via the canonical isomorphism $(P\times Q)/(K\times L)\cong (P/K)\times (Q/L)$, and $\phibar\in (\Ubar)^*$ induced by $\phi$.
Proposition~\ref{prop s comp res}(c) implies that $G=Q$, and then Proposition~\ref{prop m number}(a) implies that $L=\{1\}$. Thus, $p_1(\Ubar)=P/K$, $p_2(\Ubar)=G$, and
\begin{equation*}
   \left[\frac{P/K\times G}{\Ubar,\phibar}\right] \cdotG e_{(G,\Phi)}^G\neq 0\,.
\end{equation*}
Lemma~\ref{lem idem and ext} implies that $(\phibar)_2$ extends to $G$ and Lemma~\ref{lem ext and fact} implies that $\phibar$ extends to some $\alpha\times \beta \in ((P/K)\times G)^*$ with $\alpha\in (P/K)^*$ and $\beta\in G^*$, since $p_1(\Ubar)=P/K$ and $p_2(\Ubar)=G$. Further, we have
\begin{equation*}
  0\neq \left[\frac{(P/K)\times G}{\Ubar,\phibar}\right] \cdotG e_{(G,\Phi)}^G = 
     \tw_{\alpha}\cdott{P/K} \left[\frac{(P/K)\times G}{\Ubar,1}\right]\cdotG 
     \tw_\beta \cdotG e_{(G,\Phi)}^G
\end{equation*}
with $\tw_\beta \cdotG e_{(G,\Phi)}^G = \Phi(\beta) e_{(G,\Phi)}^G$ by Proposition~\ref{prop isom and twist on idempotents}(c). Thus, using the canonical decomposition of $\left[\frac{(P/K)\times G}{\Ubar,1}\right]$ as in Theorem~\ref{thm can decomp} we obtain $\defl^G_{G/k_2(\Ubar)} \cdot_G e_{(G,\Phi)}^G\neq 0$. Proposition~\ref{prop m number}(a) implies that $k_2(\Ubar)=\{1\}$. But this implies that $G\cong G/\{1\}\cong p_2(\Ubar)/k_2(\Ubar)\cong p_1(\Ubar)/k_1(\Ubar)$ is isomorphic to a subquotient of $p_1(\Ubar)$, which is isomorphic to a subquotient of $H$.
\end{proof}

Note that by the formula for $m_{(G,\Phi)}^N$ in Proposition~\ref{prop m number}, the condition in Proposition~\ref{prop E and m}(iii) is independent of the choice of $\KK$ as long as $\KK$ has enough roots of unity.

\begin{definition}\label{def BA-pair}
Let $G$ and $H$ be finite groups and let $\Phi\in\Hom(G^*,\KK^\times)$ and $\Psi\in\Hom(H^*,\KK^\times)$.

\smallskip
(a) The pair $(G,\Phi)$ is called a {\em $B^A$-pair} if the equivalent conditions in Proposition~\ref{prop E and m} are satisfied.

\smallskip
(b) We call $(G,\Phi)$ and $(H,\Psi)$ {\em isomorphic} and write $(G,\Phi)\cong (H,\Psi)$ if there exists an isomorphism $f\colon H\myiso G$ such that $\Psi\circ f^*=\Phi$. Note that if $(G,\Phi)\cong(H,\Psi)$ then $E_{(G,\Phi)}=E_{(H,\Psi)}$. In fact, if $f\colon H\myiso G$ satisfies $\Psi\circ f^*=\Phi$ then $\isom_f(e_{(H,\Psi)}^H)=e_{(G,\Phi)}^G$, by Proposition~\ref{prop isom and twist on idempotents}(a).

\smallskip
(c) We write $(H,\Psi)\preccurlyeq(G,\Phi)$ if there exists a normal subgroup $N$ of $G$ such that $(H,\Psi)\cong (G/N,\Phi_N)$. The relation $\preccurlyeq$ is reflexive and transitive. Moreover, if $(H,\Psi)\preccurlyeq (G,\Phi)$ and $(G,\Phi)\preccurlyeq(H,\Psi)$ then $(G,\Phi)\cong(H,\Psi)$. Thus, $\preccurlyeq$ induces a partial order on the set of isomorphism classes $[G,\Phi]$ of pairs $(G,\Phi)$, where $G$ is a finite group and $\Phi\in\Hom(G,\KK^\times)$. We denote this relation again by $\preccurlyeq$.
This partial order restricts to a partial order on the set $\calB^A$ of isomorphism classes of $B^A$-pairs.
\end{definition}

\begin{proposition}\label{prop E in E}
Let $G$ and $H$ be finite groups and let $\Phi\in\Hom(G^*,\KK^\times)$ and $\Psi\in\Hom(H^*,\KK^\times)$.

\smallskip
{\rm (a)} If $(H,\Psi)\preccurlyeq(G,\Phi)$ then $E_{(G,\Phi)}\subseteq E_{(H,\Psi)}$.

\smallskip
{\rm (b)} If $(H,\Psi)$ is a $B^A$-pair and $E_{(G,\Phi)}\subseteq E_{(H,\Psi)}$ then $(H,\Psi)\preccurlyeq (G,\Phi)$.
\end{proposition}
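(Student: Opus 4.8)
The plan is to prove the two inclusions separately: part~(a) is a short computation with the idempotents $e_{(G,\Phi)}^G$, while part~(b) rests on Lemma~\ref{lem double prod} together with the characterization of $B^A$-pairs in Proposition~\ref{prop E and m}. For part~(a), since $(H,\Psi)\preccurlyeq(G,\Phi)$ I would fix a normal subgroup $N$ of $G$ with $(H,\Psi)\cong(G/N,\Phi_N)$; by Definition~\ref{def BA-pair}(b) we then have $E_{(H,\Psi)}=E_{(G/N,\Phi_N)}$, so it suffices to show $E_{(G,\Phi)}\subseteq E_{(G/N,\Phi_N)}$, and since $E_{(G,\Phi)}$ is generated by $e_{(G,\Phi)}^G$ it is enough to check that $e_{(G,\Phi)}^G\in E_{(G/N,\Phi_N)}(G)$. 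The element $\infl_{G/N}^G\cdot_{G/N}e_{(G/N,\Phi_N)}^{G/N}=\infl_{G/N}^G(e_{(G/N,\Phi_N)}^{G/N})$ lies in $E_{(G/N,\Phi_N)}(G)$ by Definition~\ref{def E}, hence so does $\Delta(e_{(G,\Phi)}^G)\cdot_G\infl_{G/N}^G\cdot_{G/N}e_{(G/N,\Phi_N)}^{G/N}$, as $\Delta(e_{(G,\Phi)}^G)\cdot_G\infl_{G/N}^G\in B_\KK^A(G,G/N)$. Using that the ring multiplication of $B_\KK^A(G)$ is the restriction along $\Delta$ of the $\cdot_G$-action (see \cite[Proposition~2]{CoskunYilmaz2019}), this element equals $e_{(G,\Phi)}^G\cdot\infl_{G/N}^G(e_{(G/N,\Phi_N)}^{G/N})$, which by~(\ref{eqn s and e}) and Proposition~\ref{prop s comp inf}(a) equals $s_{(G,\Phi)}^G(\infl_{G/N}^G(e_{(G/N,\Phi_N)}^{G/N}))\,e_{(G,\Phi)}^G=s_{(G/N,\Phi_N)}^{G/N}(e_{(G/N,\Phi_N)}^{G/N})\,e_{(G,\Phi)}^G=e_{(G,\Phi)}^G$. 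This settles~(a).

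For part~(b), the inclusion $E_{(G,\Phi)}\subseteq E_{(H,\Psi)}$ gives $e_{(G,\Phi)}^G\in E_{(H,\Psi)}(G)$, so by Definition~\ref{def E} there is $x\in B_\KK^A(G,H)$ with $e_{(G,\Phi)}^G=x\cdot_H e_{(H,\Psi)}^H$. I would expand $x=\sum_i c_i\left[\frac{G\times H}{U_i,\phi_i}\right]$ in the standard basis and multiply this equation on the left by the idempotent $e_{(G,\Phi)}^G$, obtaining
\[
   e_{(G,\Phi)}^G = \sum_i c_i\,e_{(G,\Phi)}^G\cdot\Bigl(\left[\frac{G\times H}{U_i,\phi_i}\right]\cdot_H e_{(H,\Psi)}^H\Bigr).
\]
By Lemma~\ref{lem double prod} each summand is either $0$ or a nonzero scalar multiple of $e_{(G,\Phi)}^G$, the latter only if $p_1(U_i)=G$, $p_2(U_i)=H$, $\Phi_{k_1(U_i)}=\Psi_{k_2(U_i)}\circ\eta_{U_i}^*$, and $m_{(H,\Psi)}^{k_2(U_i)}\neq0$. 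Since the left-hand side is nonzero, at least one summand is nonzero; I would fix such an index, set $U:=U_i$ and $N:=k_1(U)$, so that $N\trianglelefteq p_1(U)=G$.

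The decisive step is now to invoke that $(H,\Psi)$ is a $B^A$-pair: $k_2(U)$ is a normal subgroup of $p_2(U)=H$, and $m_{(H,\Psi)}^{k_2(U)}\neq0$ forces $k_2(U)=\{1\}$ by Proposition~\ref{prop E and m}(iii). Consequently $\eta_U$ is an isomorphism $H=p_2(U)/k_2(U)\myiso p_1(U)/k_1(U)=G/N$, and the relation $\Phi_{k_1(U)}=\Psi_{k_2(U)}\circ\eta_U^*$ becomes $\Phi_N=\Psi\circ\eta_U^*$ (because $k_2(U)=\{1\}$ gives $\Psi_{k_2(U)}=\Psi$, and $k_1(U)=N\trianglelefteq G$ gives $\Phi_{k_1(U)}=\Phi_N$). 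Taking $f:=\eta_U^{-1}\colon G/N\myiso H$ we get $\Phi_N\circ f^*=\Psi$, that is $(H,\Psi)\cong(G/N,\Phi_N)$, and hence $(H,\Psi)\preccurlyeq(G,\Phi)$. I expect the main obstacle to be purely organizational: matching up the character conventions so that $\Phi_{k_1(U)}$ and $\Psi_{k_2(U)}$ coincide with $\Phi_N$ and $\Psi$, and picking the correct direction for $f$ in Definition~\ref{def BA-pair}(b); once Lemma~\ref{lem double prod} is applied, the existence of a single biset $U$ realizing the relation $\preccurlyeq$ is automatic.
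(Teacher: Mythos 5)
Your proof is correct and follows essentially the same route as the paper: in (a) you reduce to the pair $(G/N,\Phi_N)$ (the paper instead composes with $\isom_f$, which amounts to the same computation via \cite[Proposition~2]{CoskunYilmaz2019} and Proposition~\ref{prop s comp inf}) and check $e_{(G,\Phi)}^G\cdot\infl_{G/N}^G(e_{(G/N,\Phi_N)}^{G/N})=e_{(G,\Phi)}^G$; in (b) you extract a standard basis element with nonzero contribution, apply Lemma~\ref{lem double prod}, and use the $B^A$-pair property to force $k_2(U)=\{1\}$ so that $\eta_U$ realizes $(H,\Psi)\cong(G/N,\Phi_N)$, exactly as in the paper's argument.
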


\begin{proof}
(a) Let $N\trianglelefteq G$ and let $f\colon H\myiso G/N$ be an isomorphism with $\Psi\circ f^* = \Phi_N$. Then, by Proposition~\ref{prop isom and twist on idempotents}(a), Proposition~\ref{prop s comp inf}(b) and \cite[Proposition~2]{CoskunYilmaz2019}, we have
\begin{equation*}
  e_{(G,\Phi)}^G =  e_{(G,\Phi)}^G \cdot ( \infl_{G/N}^G \cdott{G/N}\isom_f \cdotH e_{(H,\Psi)}^H)  \in E_{(H,\Psi)}(G)\,,
\end{equation*}
so that $E_{(G,\Phi)}\subseteq E_{(H,\Psi)}$.

\smallskip
(b) Since $E_{(G,\Phi)}\subseteq E_{(H,\Psi)}$, we have $e_{(G,\Phi)}^G\in E_{(H,\Psi)}(G)$ and there exists $(U,\phi)\in\calM(G\times H)$ such that
\begin{equation}\label{eqn E in E nonzero}
   0 \neq e_{(G,\Phi)}^G \cdot \left(\left[\frac{G\times H}{U,\phi}\right] \cdotH e_{(H,\Psi)}^H \right)\,.
\end{equation}
Lemma~\ref{lem double prod} implies that $p_1(U)=G$, $p_2(U)=H$, $\phi$ extends to some $\alpha\times \beta\in(G\times H)^*$, $\Phi_{k_1(U)}=\Psi_{k_2(U)}\circ\eta_U^*$, and $m_{(H,\Psi)}^{k_2(U)}\neq 0$.
Since $(H,\Psi)$ is a $B^A$-pair, we obtain $k_2(U)=\{1\}$ and $\Phi_{k_1(U)}=\Psi\circ\eta_U^*$.
Thus, $\eta_U$ is an isomorphism $H\myiso G/k_1(U)$ with $\Phi_{k_1(U)} = \Psi\circ \eta_U^*$, so that $(H,\Psi)\preccurlyeq (G,\Phi)$.
%
%\begin{equation*}
%   0 \neq \Delta(e_{(G,\Phi)}^G)\cdotG \left[\frac{G\times H}{U,1}\right] \cdotH e_{(H,\Psi)}^H\,.
%\end{equation*}
%Decomposing $\left[\frac{G\times H}{U,1}\right]=\infl_{G/k_1(U)}^G\cdot_{G/k_1(U)} \isom_{\eta_U} \cdot_{H/k_2(U)} \defl^H_{H/k_2(U)}$ and using that $(H,\Psi)$ is a $B^A$-pair, the above inequality implies $k_2(U)=1$. Moreover, using Propositions~\ref{prop isom and twist on idempotents}(a) and \ref{prop s comp inf}(b) implies that
%\begin{equation*}
%   0 \neq e_{(G,\Phi)}^G \cdot 
%          \left( \infl_{G/k_1(U)}^G\cdott{G/k_1(U)} e_{(G/k_1(U),\Psi\circ\eta_U^*)}^{G/k_1(U)}\right)
%          = e_{(G,\Phi)}^G\cdot \sum_{(K,\Theta)} e_{(K,\Theta)}^G\,,
%\end{equation*}
%where the sum runs over all $(K,\Theta)\in[G\backslash\calX(G)]$ satisfying $(Kk_1(U)/k_1(U), \Theta_{k_1(U)})=_{G/k_1(U)} (G/k_1(U),\Psi\circ\eta_U^*)$. Since the above expression is nonzero, one of these $(K,\Theta)$ must be $G$-conjugate and therefore also equal to $(G,\Phi)$.
%Thus, $\eta_U$ is an isomorphism $H\myiso G/k_1(U)$ and $\Phi_{k_1(U)} = \Psi\circ \eta_U^*$. This shows that $(H,\Psi)\preccurlyeq (G,\Phi)$.
\end{proof}

%%%%%%%%%%%%%%%%%%% SECTION 8 %%%%%%%%%%%%%%%%%%%%%%%%%%%%%%%%%%%%%

\section{Subfunctors of $B_\KK^A$}\label{sec subfunctors of BA}

We keep the assumptions on $A$ and $\KK$ from Section~\ref{sec BA-pairs and the subfunctors E}.
In this section we prove one of our main results, Theorem~\ref{thm lattice iso}, which describes the lattice of subfunctors of $B_\KK^A$ in terms of the poset $(\calB^A, \preccurlyeq)$.

\begin{definition}\label{def minimal group}
Let $k$ be a commutative ring and $F\in\calF_k^A$. A finite group $G$ is called a {\em minimal group} for $F$ if $G$ is a group of minimal order with $F(G)\neq \{0\}$.
\end{definition}

For any finite group $G$, the group $\Aut(G)$ acts on $\calX(G)$ via $\lexp{f}{(K,\Psi)}:=(f(K),\Psi\circ (f|_K)^*)$. We will denote by $\calXhat(G)\subseteq \calX(G)$ the set of those pairs $(K,\Psi)$ with $K=G$. Note that $\calXhat(G)$ is $\Aut(G)$-invariant and that $G$ acts trivially by conjugation on $\calXhat(G)$, so that $\calXhat(G)$ can be viewed as an $\Out(G)$-set.

\begin{proposition}\label{prop min pairs are BA}
Let $F$ be a subfunctor of $B_\KK^A$ in $\calF_\KK^A$.

\smallskip
{\rm (a)} For each finite group $G$ one has 
\begin{equation*}
   F(G)=\bigoplus_{(K,\Psi)\in[G\backslash\calX_F(G)]} \KK \cdot e_{(K,\Psi)}^G\,,
\end{equation*}
where $\calX_F(G):=\{(K,\Psi)\in \calX(G)\mid e_{(K,\Psi)}^G\in F(G)\}$.

\smallskip
{\rm (b)} For any finite group $G$, the set $\calX_F(G)$ is invariant under the action of $\Aut(G)$. 

\smallskip
{\rm (c)} Suppose that $H$ is a minimal group for $F$. Then $\calX_F(H)$ contains only elements from $\calXhat(H)$. Moreover, each $(H,\Psi)\in\calX_F(H)$ is a $B^A$-pair and one has $E_{(H,\Psi)}\subseteq F$. 
\end{proposition}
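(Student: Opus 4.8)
The plan is to prove the three assertions of part~(c) in order, leaning heavily on part~(a), part~(b), and the machinery of Sections~\ref{sec BA-pairs and the subfunctors E} and \ref{sec m}.

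First I would prove that $\calX_F(H)\subseteq\calXhat(H)$. Suppose $(K,\Psi)\in\calX_F(H)$ with $K<H$. Then $e_{(K,\Psi)}^H\in F(H)$, and applying $\res^H_K$ (which is a morphism in $\calC^A_\KK$ and hence preserves the subfunctor $F$) together with Proposition~\ref{prop s comp res}(b), we get that $e_{(K,\Psi)}^K$ (or some $K$-conjugate, hence by a further automorphism some representative) lies in $F(K)$. Thus $F(K)\neq\{0\}$ with $|K|<|H|$, contradicting the minimality of $H$. Hence every pair in $\calX_F(H)$ has first component $H$.

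Next I would show that each $(H,\Psi)\in\calX_F(H)$ is a $B^A$-pair, i.e.\ verifies condition~(iv) of Proposition~\ref{prop E and m}. Let $\{1\}\neq N\trianglelefteq H$. Since $F$ is a subfunctor and $\defl^H_{H/N}$ is a morphism in $\calC^A_\KK$, the element $\defl^H_{H/N}(e_{(H,\Psi)}^H)$ lies in $F(H/N)$. By Proposition~\ref{prop m number}(a) this element equals $m_{(H,\Psi)}^{H,N}\cdot e_{(HN/N,\Psi_N)}^{H/N}=m_{(H,\Psi)}^{N}\cdot e_{(H/N,\Psi_N)}^{H/N}$. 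If this were nonzero, then $e_{(H/N,\Psi_N)}^{H/N}\in F(H/N)$, so $F(H/N)\neq\{0\}$ with $|H/N|<|H|$, again contradicting minimality. Therefore $\defl^H_{H/N}(e_{(H,\Psi)}^H)=0$ for all nontrivial $N\trianglelefteq H$, which is exactly condition~(iv); so $(H,\Psi)$ is a $B^A$-pair.

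Finally, for the inclusion $E_{(H,\Psi)}\subseteq F$: by Definition~\ref{def E}, $E_{(H,\Psi)}$ is the subfunctor of $B_\KK^A$ generated by $e_{(H,\Psi)}^H$. Since $(H,\Psi)\in\calX_F(H)$ means $e_{(H,\Psi)}^H\in F(H)$, and since $F$ is a subfunctor (hence closed under the action of all morphisms in $\calC^A_\KK$), the subfunctor generated by $e_{(H,\Psi)}^H$ is contained in $F$; that is, $E_{(H,\Psi)}\subseteq F$. The only step requiring genuine care is the second one: one must make sure that the deflation argument is applied to the \emph{correct} idempotent and that the scalar $m_{(H,\Psi)}^N$ is the right quantity, which is precisely what Proposition~\ref{prop m number}(a) guarantees; the rest is a direct application of minimality together with the elementary-operation formulas from Section~\ref{sec elementary operations on primitive idempotents}.
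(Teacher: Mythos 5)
Your treatment of part (c) is essentially the paper's own argument: restriction together with Proposition~\ref{prop s comp res}(b) and minimality forces $K=H$; deflation together with Proposition~\ref{prop m number}(a) and minimality forces $m_{(H,\Psi)}^N=0$ for all $\{1\}\neq N\trianglelefteq H$ (i.e.\ condition (iv) of Proposition~\ref{prop E and m}); and $E_{(H,\Psi)}\subseteq F$ is immediate from Definition~\ref{def E} since $F$ is a subfunctor. One small imprecision is harmless: $\res^H_K(e_{(K,\Psi)}^H)$ is a \emph{sum} of several primitive idempotents of $B^A_\KK(K)$, not a single one; either observe that this sum is nonzero (the summand indexed by the $K$-orbit representative of $(K,\Psi)$ occurs), which already gives $F(K)\neq\{0\}$ and the desired contradiction, or multiply by $e_{(K,\Psi)}^K$ as the paper does to extract a single idempotent.

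The genuine gap is that the proposition also asserts (a) and (b), and your proof not only omits them but explicitly leans on them, so as written it does not establish the full statement. What is missing is short but not vacuous. For (a) one needs that $F(G)$ is an \emph{ideal} of the $\KK$-algebra $B^A_\KK(G)$: for $a\in F(G)$ and $x\in B^A_\KK(G)$ one has $x\cdot a=\Delta(x)\cdot_G a\in F(G)$, because $\Delta(x)\in B^A_\KK(G,G)$ and $F$ is a subfunctor (this is the identity from \cite[Proposition~2]{CoskunYilmaz2019} used throughout the paper); since the $e_{(K,\Psi)}^G$, $(K,\Psi)\in[G\backslash\calX(G)]$, form a $\KK$-basis of $B^A_\KK(G)$ consisting of pairwise orthogonal idempotents (Theorem~\ref{thm mark isomorphism}), any ideal is the span of exactly those basis idempotents it contains, which is the claimed decomposition of $F(G)$. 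For (b) one line suffices: if $e_{(K,\Psi)}^G\in F(G)$ and $f\in\Aut(G)$, then $\isom_f(e_{(K,\Psi)}^G)=e_{(f(K),\Psi\circ(f|_K)^*)}^G\in F(G)$ by Proposition~\ref{prop isom and twist on idempotents}(a), since $\isom_f$ is a morphism in $\calC_\KK^A$ and $F$ is a subfunctor. With these two pieces added, your argument for (c) goes through and coincides with the paper's proof.
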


\begin{proof}
(a) For all $a\in F(G)$ and $x\in B_\KK^A(G)$, \cite[Proposition~2]{CoskunYilmaz2019} implies $x\cdot a= \Delta(x)\cdot_G a\in F(G)$. Thus, $F(G)$ is an ideal of $B_\KK^A(G)$. Since the elements $e_{(K,\Theta)}^G$ with $(K,\Theta)\in[G\backslash\calX(G)]$ form a $\KK$-basis of $B_\KK^A(G)$ consisting of pairwise orthogonal idempotents, the assertion in (a) follows.

\smallskip
(b) If $e_{(K,\Psi)}^G\in F(G)$ and $f\in\Aut(G)$ then $e_{(f(K),\Psi\circ (f|_K)^*)}^G = \isom_f(e_{(K,\Psi)}^G)\in F(G)$.

\smallskip
(c) Assume that $H$ is a minimal group for $F$ and that $(K,\Psi)\in\calX_F(H)$. Then $e_{(K,\Psi)}^H\in F(H)$ and 
\begin{equation*}
    0\neq e_{(K,\Psi)}^K= e_{(K,\Psi)}^K\res^H_K(e_{(K,\Psi)}^H)\in F(K)\,,
\end{equation*}
by Proposition~\ref{prop s comp res}(b). The minimality of $H$ implies $K=H$. By Proposition~\ref{prop m number}(a), the minimality also implies that $m_{(H,\Psi)}^N=0$ for all $\{1\}\neq N\trianglelefteq H$, since $e_{(H,\Psi)}^H\in F(H)$. Clearly, $E_{(H,\Psi)}\subseteq F$.
\end{proof}

\begin{definition}\label{def minimal pair}
Let $F$ be a subfunctor of $B^A_\KK$ in $\calF_{\KK}^A$. If $H$ is a minimal group for $F$ and $\Psi\in\Hom(H,\KK^\times)$ is such that $(H,\Psi)\in\calX_F(H)$ then we call $(H,\Psi)$ a {\em minimal pair} for $F$. By Proposition~\ref{prop min pairs are BA}(c), each minimal pair for $F$ is a $B^A$-pair.
 \end{definition}

\begin{proposition}\label{prop min pairs of E}
Let $H$ be a finite group, $\Psi\in\Hom(H^*,\KK^*)$, and let $(G,\Phi)$ be a minimal pair for $E_{(H,\Psi)}$. Then:

\smallskip
{\rm (a)} $E_{(H,\Psi)}=E_{(G,\Phi)}$.

\smallskip
{\rm (b)} There exists $N\trianglelefteq H$ with $m_{(H,\Psi)}^N\neq 0$ and $(H/N,\Psi_N)\cong(G,\Phi)$. In particular $(G,\Phi)\preccurlyeq (H,\Psi)$. Moreover, if also $N'\trianglelefteq H$ satisfies $(H/N',\Psi_{N'})\cong (G,\Phi)$ then $m_{(H,\Psi)}^{N'} = m_{(H,\Psi)}^N\neq 0$.

\smallskip
{\rm (c)} Up to isomorphism, $(G,\Phi)$ is the only minimal pair for $E_{(H,\Psi)}$.

\smallskip
{\rm (d)} If $(H,\Psi)$ is a $B^A$-pair, then, up to isomorphism, $(H,\Psi)$ is the only minimal pair of $E_{(H,\Psi)}$. In particular,
\begin{equation*}
   E_{(H,\Psi)}(H)=\bigoplus_{\substack{(H,\Psi')\in\calXhat(H)\\
   (H,\Psi')=_{\Out(H)}(H,\Psi)}} \KK e_{(H,\Psi')}^H\,.
\end{equation*}
\end{proposition}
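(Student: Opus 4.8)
The plan is to establish~(b) first, deduce~(a) from it, and then obtain~(c) and~(d) by essentially formal arguments. By Proposition~\ref{prop min pairs are BA}(c), the hypothesis that $(G,\Phi)$ is a minimal pair for $E_{(H,\Psi)}$ already gives that $(G,\Phi)$ is a $B^A$-pair, that $E_{(G,\Phi)}\subseteq E_{(H,\Psi)}$, and that $e_{(G,\Phi)}^G\in E_{(H,\Psi)}(G)$. For~(b), exactly as at the start of the proof of Proposition~\ref{prop E in E}(b) there is some $(U,\phi)\in\calM(G\times H)$ with
\[
  e_{(G,\Phi)}^G\cdot\left(\left[\frac{G\times H}{U,\phi}\right]\cdotH e_{(H,\Psi)}^H\right)\neq 0 .
\]
Lemma~\ref{lem double prod} then forces $p_1(U)=G$, $p_2(U)=H$, $m_{(H,\Psi)}^{k_2(U)}\neq 0$, and $\Phi_{k_1(U)}=\Psi_{k_2(U)}\circ\eta_U^*$; in particular $\eta_U\colon H/k_2(U)\myiso G/k_1(U)$ realizes $(G/k_1(U),\Phi_{k_1(U)})\cong(H/k_2(U),\Psi_{k_2(U)})$. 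Put $N:=k_2(U)\trianglelefteq H$. Since $m_{(H,\Psi)}^N\neq 0$, Proposition~\ref{prop m number}(a) gives $\defl^H_{H/N}(e_{(H,\Psi)}^H)=m_{(H,\Psi)}^N\,e_{(H/N,\Psi_N)}^{H/N}\neq 0$, so $e_{(H/N,\Psi_N)}^{H/N}\in E_{(H,\Psi)}(H/N)$ and hence $E_{(H,\Psi)}(H/N)\neq\{0\}$. Minimality of $G$ forces $|G|\le|H/N|$, while $\eta_U$ gives $|H/N|=|G|/|k_1(U)|$; therefore $k_1(U)=\{1\}$, so $(H/N,\Psi_N)\cong(G,\Phi)$ and $(G,\Phi)\preccurlyeq(H,\Psi)$. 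If $N'\trianglelefteq H$ likewise satisfies $(H/N',\Psi_{N'})\cong(G,\Phi)\cong(H/N,\Psi_N)$, an isomorphism realizing $(H/N',\Psi_{N'})\cong(H/N,\Psi_N)$ is exactly the hypothesis of Proposition~\ref{prop G/M isomorphic G/N} (applied with $H,\Psi$ in place of $G,\Phi$ and the normal subgroups $N,N'$), whence $m_{(H,\Psi)}^{N'}=m_{(H,\Psi)}^N\neq 0$. This proves~(b).

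Part~(a) is then immediate: from~(b) we have $(G,\Phi)\preccurlyeq(H,\Psi)$, so $E_{(H,\Psi)}\subseteq E_{(G,\Phi)}$ by Proposition~\ref{prop E in E}(a), and together with $E_{(G,\Phi)}\subseteq E_{(H,\Psi)}$ we get equality. For~(c), if $(G',\Phi')$ is another minimal pair for $E_{(H,\Psi)}$, then~(a) applied to it gives $E_{(G',\Phi')}=E_{(H,\Psi)}=E_{(G,\Phi)}$, and both pairs are $B^A$-pairs; Proposition~\ref{prop E in E}(b) applied in both directions yields $(G,\Phi)\preccurlyeq(G',\Phi')$ and $(G',\Phi')\preccurlyeq(G,\Phi)$, hence $(G,\Phi)\cong(G',\Phi')$ by the antisymmetry recorded in Definition~\ref{def BA-pair}(c).

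For~(d), if $(H,\Psi)$ is a $B^A$-pair, then Proposition~\ref{prop E and m} shows every $H'$ with $E_{(H,\Psi)}(H')\neq\{0\}$ has $|H'|\ge|H|$, while $e_{(H,\Psi)}^H\neq 0$ shows $E_{(H,\Psi)}(H)\neq\{0\}$; thus $H$ is a minimal group and $(H,\Psi)$ a minimal pair for $E_{(H,\Psi)}$, so it is \emph{the} minimal pair up to isomorphism by~(c). For the displayed decomposition, apply Proposition~\ref{prop min pairs are BA}(a) to the subfunctor $E_{(H,\Psi)}$; the restriction argument in the proof of Proposition~\ref{prop min pairs are BA}(c) together with minimality of $H$ shows $\calX_{E_{(H,\Psi)}}(H)\subseteq\calXhat(H)$, and for $(H,\Psi')\in\calXhat(H)$ the four conditions ``$e_{(H,\Psi')}^H\in E_{(H,\Psi)}(H)$'', ``$(H,\Psi')$ is a minimal pair for $E_{(H,\Psi)}$'', ``$(H,\Psi')\cong(H,\Psi)$'' (by~(c)), and ``$(H,\Psi')=_{\Out(H)}(H,\Psi)$'' are all equivalent; the implication from the orbit condition back to membership uses $\isom_f(e_{(H,\Psi)}^H)=e_{(H,\Psi')}^H$ from Proposition~\ref{prop isom and twist on idempotents}(a). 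Since $H$ acts trivially on $\calXhat(H)$, each such pair is its own $H$-orbit, and the stated formula follows.

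The main obstacle is the order estimate inside~(b): one must notice that the nonvanishing scalar $m_{(H,\Psi)}^{k_2(U)}$ produced by Lemma~\ref{lem double prod} pushes $e_{(H,\Psi)}^H$ under deflation into $E_{(H,\Psi)}(H/k_2(U))$, so that $H/k_2(U)$ witnesses the nontriviality of $E_{(H,\Psi)}$ and hence cannot have order smaller than $|G|$; combined with the isomorphism $\eta_U\colon H/k_2(U)\myiso G/k_1(U)$ this squeezes $k_1(U)$ down to the trivial group. Everything else is bookkeeping with the already-established properties of the functors $E_{(G,\Phi)}$ and of the order $\preccurlyeq$.
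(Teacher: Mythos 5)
Your proof is correct, and its skeleton is the same as the paper's: part (b) is extracted from a nonvanishing product $e_{(G,\Phi)}^G\cdot\bigl(\left[\frac{G\times H}{U,\phi}\right]\cdotH e_{(H,\Psi)}^H\bigr)$ via Lemma~\ref{lem double prod}, the ``Moreover'' statement comes from Proposition~\ref{prop G/M isomorphic G/N}, and (a), (c), (d) are formal consequences. The differences are local re-routings through results already at hand. In (b) the paper gets $k_1(U)=\{1\}$ by using the extension of $\phi$ and Lemma~\ref{lem ext and fact} to factor $\left[\frac{G\times H}{U,\phi}\right]$ through $q(U)\cong G/k_1(U)$ and then invoking minimality of $G$; you instead use $m_{(H,\Psi)}^{k_2(U)}\neq 0$ and Proposition~\ref{prop m number}(a) to deflate $e_{(H,\Psi)}^H$ to a nonzero element of $E_{(H,\Psi)}(H/k_2(U))$, and compare orders via $\eta_U$ --- equally valid, and it bypasses Lemma~\ref{lem ext and fact} entirely. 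In (c) the paper applies part (b) to the second minimal pair and compares orders, while you run Proposition~\ref{prop E in E}(b) in both directions and use the antisymmetry of $\preccurlyeq$ from Definition~\ref{def BA-pair}(c); in (d) the paper uses (b) together with the $B^A$-pair condition to force $N=\{1\}$, while you observe via Proposition~\ref{prop E and m} that $(H,\Psi)$ is itself a minimal pair and quote (c). You also write out the verification of the displayed decomposition of $E_{(H,\Psi)}(H)$, which the paper leaves implicit, correctly combining Proposition~\ref{prop min pairs are BA}(a),(c) with Proposition~\ref{prop isom and twist on idempotents}(a) and the triviality of the $H$-action on $\calXhat(H)$. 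None of these variants changes the substance, though your version of (b) is marginally leaner in that it relies only on the deflation formula rather than the factorization lemma.
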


\begin{proof}
(b) Since $(G,\Phi)$ is a minimal pair for $E_{(H,\Psi)}$, there exists $x\in B_\KK^A(G,H)$ such that $e_{(G,\Phi)}^G= x\cdot_H e_{(H,\Psi)}$. Multiplication with $e_{(G,\Phi)}^G$ yields $e_{(G,\Phi)}^G = e_{(G,\Phi)}^G\cdot(x\cdot_H e_{(H,\Psi)}^H)$. Thus, there exists $(U,\phi)\in\calM(G\times H)$ with 
\begin{equation*}
  e_{(G,\Phi)}^G \cdot \left( \left[\frac{G\times H}{U,\phi}\right]\cdotH e_{(H,\Psi)}^H \right) \neq 0\,.
\end{equation*}
Lemma~\ref{lem double prod} implies that $p_1(U)=G$, $p_2(U)=H$, $\phi$ has an extension to $G\times H$, $\Phi_{k_1(U)}=\Psi_{k_2(U)}\circ\eta_U^*$, and $m_{(H,\Psi)}^{k_2(U)}\neq 0$.
Since $\phi$ has an extension to $G\times H$, $\left[\frac{G\times H}{U,\phi}\right]$ factors through $q(U)\cong G/k_1(U)$ by Lemma~\ref{lem ext and fact}. Since $G$ is a minimal group for $E_{(H,\Psi)}$ this implies $k_1(U)=\{1\}$. Set $N:=k_2(U)\trianglelefteq H$. Then $\eta_U\colon H/N\myiso G$ satisfies $\Psi_N\circ\eta_U^*=\Phi$. If also $N'\trianglelefteq G$ satisfies $(H/N',\Psi_{N'})\cong (G,\Phi)$, then ($H/N,\Psi_{N'})\cong (H/N,\Psi_N)$ and we obtain $m_{(H,\Psi)}^{N'}=m_{(H,\Psi)}^N\neq 0$ by Proposition~\ref{prop G/M isomorphic G/N}.

\smallskip
(a) Note that $E_{(G,\Phi)}\subseteq E_{(H,\Psi)}$ by Proposition~\ref{prop min pairs are BA}(c). The converse follows from Proposition~\ref{prop E in E}(a) and Part~(b).

\smallskip
(c) Assume that also $(G',\Phi')$ is a minimal pair for $E_{(H,\Phi)}=E_{(G,\Phi)}$. Then, by Part~(b) applied to $(G',\Phi')$ and $(G,\Phi)$ in place of $(G,\Phi)$ and $(H,\Psi)$, we have $(G',\Phi')\preccurlyeq (G,\Phi)$. Since both $G$ and $G'$ are minimal groups for $E_{(H,\Psi)}$, they have the same order. Thus, $(G',\Phi')\cong(G,\Phi)$.

\smallskip
(d) Now assume that $(H,\Psi)$ is a $B^A$-pair and let $(G,\Phi)$ be a minimal pair for $E_{(H,\Psi)}$. Then, by Part~(b), there exists $N\trianglelefteq G$ with $m_{(H,\Psi)}^N\neq 0$ and $(H/N,\Psi_N)\cong (G,\Phi)$. Since $(H,\Psi)$ is a $B^A$-pair, this implies $N=\{1\}$ and $(H,\Psi)\cong (G,\Phi)$.
\end{proof}

\begin{notation}\label{not beta}
For any finite group $G$ and any $\Phi\in\Hom(G^*,\KK^\times)$ we denote by $\beta(G,\Phi)$ the class of all minimal pairs for $E_{(G,\Phi)}$. Thus $\beta(G,\Phi)$ is the isomorphism class $[H,\Psi]$ of a $B^A$-pair $(H,\Psi)$. Note that $\beta(G,\Phi)\preccurlyeq [G,\Phi]$ by Proposition~\ref{prop min pairs of E}(b).
\end{notation}

The following proposition is not used in this paper, but of interest in its own right. It is the analogue of \cite[Theorem~5.4.11]{Bouc2010}.

\begin{proposition}
Let $G$ be a finite group and let $\Phi\in\Hom(G^*,\KK^\times)$.

\smallskip
{\rm (a)} If $(H,\Psi)$ is a $B^A$-pair with $(H,\Psi)\preccurlyeq(G,\Phi)$ then $[H,\Psi]\preccurlyeq\beta(G,\Phi)$.

\smallskip
{\rm (b)} For any $N\trianglelefteq G$ the following are equivalent:

\smallskip
\quad {\rm (i)} $m_{(G,\Phi)}^N\neq 0$.

\smallskip
\quad {\rm (ii)} $\beta(G,\Phi)\preccurlyeq [G/N,\Phi_N]$.

\smallskip
\quad {\rm (iii)} $\beta(G,\Phi)=\beta(G/N,\Phi_N)$.

\smallskip
{\rm (c)} For any $N\trianglelefteq G$ the following are equivalent:

\smallskip
\quad {\rm (i)} $[G/N,\Phi_N]\cong \beta(G,\Phi)$.

\smallskip
\quad {\rm (ii)} $(G/N,\Phi_N)$ is a $B^A$-pair and $m_{(G,\Phi)}^N\neq 0$.
\end{proposition}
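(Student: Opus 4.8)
The plan is to prove the three parts in the order (b), (a), (c), since (a) will follow easily once (b) is available, and (c) is essentially a repackaging of (b) together with the definition of a $B^A$-pair.

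\medskip\noindent\textbf{Proof of (b).} First I would establish the chain (i)$\Rightarrow$(iii)$\Rightarrow$(ii)$\Rightarrow$(i). For (i)$\Rightarrow$(iii): assuming $m_{(G,\Phi)}^N\neq 0$, apply Proposition~\ref{prop E in E}(a) (or directly the argument in the proof of Proposition~\ref{prop min pairs of E}(a)) to the pair $(G/N,\Phi_N)\preccurlyeq(G,\Phi)$ to get $E_{(G,\Phi)}\subseteq E_{(G/N,\Phi_N)}$; conversely, the nonvanishing $m_{(G,\Phi)}^N\neq 0$ means $\defl^G_{G/N}(e_{(G,\Phi)}^G)= m_{(G,\Phi)}^N\cdot e_{(G/N,\Phi_N)}^G$ is a nonzero scalar multiple of $e_{(G/N,\Phi_N)}^G$, so $e_{(G/N,\Phi_N)}^G=m_{(G,\Phi)}^N{}^{-1}\defl^G_{G/N}(e_{(G,\Phi)}^G)\in E_{(G,\Phi)}(G/N)$, whence $E_{(G/N,\Phi_N)}\subseteq E_{(G,\Phi)}$. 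Therefore $E_{(G,\Phi)}=E_{(G/N,\Phi_N)}$, and the two functors have the same minimal pairs, i.e.\ $\beta(G,\Phi)=\beta(G/N,\Phi_N)$. The implication (iii)$\Rightarrow$(ii) is immediate from Proposition~\ref{prop min pairs of E}(b) applied to $E_{(G/N,\Phi_N)}$: $\beta(G/N,\Phi_N)\preccurlyeq[G/N,\Phi_N]$. For (ii)$\Rightarrow$(i): write $\beta(G,\Phi)=[L,\Theta]$ and suppose $[L,\Theta]\preccurlyeq[G/N,\Phi_N]$, so there is $\bar N\trianglelefteq G/N$, necessarily of the form $\bar N= M/N$ for some $N\le M\trianglelefteq G$, with $((G/N)/(M/N),(\Phi_N)_{M/N})\cong(L,\Theta)$; using the canonical isomorphism $(G/N)/(M/N)\cong G/M$ this says $(G/M,\Phi_M)\cong(L,\Theta)$. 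On the other hand $[L,\Theta]$ is the minimal pair of $E_{(G,\Phi)}$, so by Proposition~\ref{prop min pairs of E}(b) there is $M'\trianglelefteq G$ with $m_{(G,\Phi)}^{M'}\neq 0$ and $(G/M',\Phi_{M'})\cong(L,\Theta)\cong(G/M,\Phi_M)$; the last clause of Proposition~\ref{prop min pairs of E}(b) then forces $m_{(G,\Phi)}^{M}=m_{(G,\Phi)}^{M'}\neq 0$. Now apply the transitivity formula Proposition~\ref{prop trans m formula}: $m_{(G,\Phi)}^M=m_{(G,\Phi)}^N\cdot m_{(G/N,\Phi_N)}^{M/N}$, and since the left side is nonzero we conclude $m_{(G,\Phi)}^N\neq 0$, which is (i).

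\medskip\noindent\textbf{Proof of (a).} Let $(H,\Psi)$ be a $B^A$-pair with $(H,\Psi)\preccurlyeq(G,\Phi)$; choose $N\trianglelefteq G$ with $(G/N,\Phi_N)\cong(H,\Psi)$. Since $(H,\Psi)$ is a $B^A$-pair, Proposition~\ref{prop min pairs of E}(d) gives $\beta(H,\Psi)=[H,\Psi]$, and by Definition~\ref{def BA-pair}(b) and the isomorphism-invariance of $\beta$ (both sides depending only on the isomorphism class of the generating pair, via Proposition~\ref{prop isom and twist on idempotents}(a)) we get $\beta(G/N,\Phi_N)=[H,\Psi]$. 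It remains to relate $\beta(G/N,\Phi_N)$ to $\beta(G,\Phi)$. By Proposition~\ref{prop min pairs of E}(b) applied to the pair $(G/N,\Phi_N)\preccurlyeq(G,\Phi)$—more precisely, by part~(b) of the \emph{current} proposition already established—we have $\beta(G,\Phi)\preccurlyeq[G/N,\Phi_N]$ whenever $m_{(G,\Phi)}^N\neq0$; but in general, without any nonvanishing hypothesis, one still has $E_{(G,\Phi)}\subseteq E_{(G/N,\Phi_N)}$ by Proposition~\ref{prop E in E}(a), hence every minimal group of $E_{(G/N,\Phi_N)}$ dominates (in order) a minimal group of $E_{(G,\Phi)}$; combining this with $\beta(G/N,\Phi_N)=[H,\Psi]$ and Proposition~\ref{prop E in E}(b) (using that $(H,\Psi)$ is a $B^A$-pair) yields $(H,\Psi)\preccurlyeq$ the minimal pair of $E_{(G,\Phi)}$, i.e.\ $[H,\Psi]\preccurlyeq\beta(G,\Phi)$. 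The cleanest route: $E_{(G,\Phi)}\subseteq E_{(G/N,\Phi_N)}=E_{(H,\Psi)}$, and since $(H,\Psi)$ is a $B^A$-pair, Proposition~\ref{prop E in E}(b) gives $(H,\Psi)\preccurlyeq\beta(G,\Phi)$ once we observe $E_{(G,\Phi)}\subseteq E_{(H,\Psi)}$ forces the minimal pair $(G_0,\Phi_0)$ of $E_{(G,\Phi)}$ to satisfy $E_{(G_0,\Phi_0)}=E_{(G,\Phi)}\subseteq E_{(H,\Psi)}$, whence $(H,\Psi)\preccurlyeq(G_0,\Phi_0)$, that is $[H,\Psi]\preccurlyeq\beta(G,\Phi)$.

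\medskip\noindent\textbf{Proof of (c).} (i)$\Rightarrow$(ii): if $[G/N,\Phi_N]\cong\beta(G,\Phi)$, then $(G/N,\Phi_N)$ is (isomorphic to) the minimal pair of $E_{(G,\Phi)}$, hence a $B^A$-pair by Proposition~\ref{prop min pairs are BA}(c) / Definition~\ref{def minimal pair}; and $\beta(G,\Phi)\cong[G/N,\Phi_N]$ together with part~(b) equivalence (iii)$\Leftrightarrow$(i)—noting $\beta(G/N,\Phi_N)=\beta(G,\Phi)$ trivially when the classes coincide—gives $m_{(G,\Phi)}^N\neq0$. Conversely (ii)$\Rightarrow$(i): if $(G/N,\Phi_N)$ is a $B^A$-pair and $m_{(G,\Phi)}^N\neq0$, then by part~(b), (i)$\Rightarrow$(iii), we get $\beta(G,\Phi)=\beta(G/N,\Phi_N)$, and since $(G/N,\Phi_N)$ is a $B^A$-pair, Proposition~\ref{prop min pairs of E}(d) gives $\beta(G/N,\Phi_N)=[G/N,\Phi_N]$; hence $[G/N,\Phi_N]\cong\beta(G,\Phi)$.

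\medskip\noindent I expect the main obstacle to be the implication (ii)$\Rightarrow$(i) in part~(b): translating the order relation $\beta(G,\Phi)\preccurlyeq[G/N,\Phi_N]$ into a statement about a specific normal subgroup $M$ with $N\le M\trianglelefteq G$ and then feeding the transitivity formula Proposition~\ref{prop trans m formula} requires carefully invoking the uniqueness clause of Proposition~\ref{prop min pairs of E}(b) (which rests on Proposition~\ref{prop G/M isomorphic G/N}) to know that $m_{(G,\Phi)}^M\neq0$ for \emph{that} particular $M$, not merely for some subgroup in its isomorphism class. Once that bookkeeping is handled, the remaining arguments are formal consequences of the results in Sections~\ref{sec m}–\ref{sec BA-pairs and the subfunctors E}.
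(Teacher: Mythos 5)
Your argument is correct and rests on the same ingredients as the paper's own proof (Propositions~\ref{prop E in E}, \ref{prop min pairs of E}, \ref{prop trans m formula}, and---through the uniqueness clause of Proposition~\ref{prop min pairs of E}(b)---Proposition~\ref{prop G/M isomorphic G/N}). The only difference is organizational: in (b) you close the cycle as (i)$\Rightarrow$(iii)$\Rightarrow$(ii)$\Rightarrow$(i), with the transitivity formula carrying the last step, whereas the paper runs (i)$\Rightarrow$(ii)$\Rightarrow$(iii)$\Rightarrow$(i), using part~(a) for the middle implication and invoking Proposition~\ref{prop G/M isomorphic G/N} directly; the content is essentially identical.
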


\begin{proof}
Let $(K,\Theta)\in\beta(G,\Phi)$. Thus, $E_{(G,\Phi)}=E_{(K,\Theta)}$ by Proposition~\ref{prop min pairs of E}(a) and $(K,\Theta)$ is a $B^A$-pair.

\smallskip
(a)  Let $(H,\Psi)$ be as in the statement. Then $E_{(K,\Theta)}=E_{(G,\Phi)}\subseteq E_{(H,\Psi)}$ by Proposition~\ref{prop E in E}(a). Now Proposition~\ref{prop E in E}(b) implies $(H,\Psi)\preccurlyeq(K,\Theta)$.

\smallskip
(b) (i)$\Rightarrow$(ii): Since $m_{(G,\Phi)}^N\neq 0$, Proposition~\ref{prop m number}(a) implies that  
\begin{equation*}
   e_{(G/N,\Phi_N)}^{G/N}= (m_{(G,\Phi)}^N)^{-1}\defl^G_{G/N}(e_{(G,\Phi)}^G)\in E_{(G,\Phi)}(G/N)
   =E_ {(K,\Theta)}(G/N)
\end{equation*} 
so that $E_{(G/N,\Phi_N)}\subseteq E_{(K,\Theta)}$. Proposition~\ref{prop E in E}(b) implies $(K,\Psi)\preccurlyeq (G/N,\Phi_N)$.

\smallskip
(ii)$\Rightarrow$(iii): By Part~(a) applied to $(K,\Theta)$ and $(G/N,\Phi_N)$ we obtain $\beta(G,\Phi)=[K,\Theta]\preccurlyeq\beta(G/N,\Phi_N)$. Conversely, we have $\beta(G/N,\Phi_N)\preccurlyeq [G/N,\Phi_N] \preccurlyeq [G,\Phi]$ and Part~(a) again implies $\beta(G/N,\Phi_N)\preccurlyeq \beta(G,\Phi)$.

\smallskip
(iii)$\Rightarrow$(i): By Proposition~\ref{prop min pairs of E}(b) there exists $M\trianglelefteq G$ such that $m_{(G,\Phi)}^M\neq 0$ and $[G/M,\Phi_M]=\beta(G,\Phi)$. Similarly, there exists $N\le M'\trianglelefteq G$ such that $m_{(G/N,\Phi_N)}^{M'/N}\neq 0$ and $[(G/N)/(M'/N),(\Phi_N)_{M'}]=\beta(G/N,\Phi_N)$. Since
\begin{equation*}
   [G/M',\Phi_{M'}]=[(G/N)/(M'/N),(\Phi_N)_{M'}] = \beta(G/N,\Phi_N) = \beta(G,\Phi) = [G/M,\Phi_M]\,,
\end{equation*}
Proposition~\ref{prop G/M isomorphic G/N} implies that $m_{(G,\Phi)}^{M'} = m_{(G,\Phi)}^M\neq 0$. By Proposition~\ref{prop trans m formula} we have $m_{(G,\Phi)}^{M'} = m_{(G,\Phi)}^N \cdot m_{(G/N,\Phi_N)}^{M'}$ which implies that $m_{(G,\Phi)}^N\neq 0$.

\smallskip
(c) This follows immediately from the equivalence between (i) and (iii) in Part~(b), noting that $\beta(G/N,\Phi_N)=[G/N,\Phi_N]$ if $(G/N,\Phi_N)$ is a $B^A$-pair and that $\beta(G,\Phi)$ consists of $B^A$-pairs.
\end{proof}

\begin{definition}
A subset $\calZ$ of the poset $\calB^A$, ordered by the relation $\preccurlyeq$ (cf. Definition~\ref{def BA-pair}), is called {\em closed} if for every $[H,\Psi]\in\calZ$ and $[G,\Phi]\in\calB^A$ with $[H,\Psi]\preccurlyeq[G,\Phi]$ one has $[G,\Phi]\in\calZ$.
\end{definition}

\begin{theorem}\label{thm lattice iso}
Let $\calS$ denote the set of subfunctors of $B_\KK^A$ in $\calF_\KK^A$, ordered by inclusion of subfunctors, and let $\calT$ denote the set of closed subsets of $\calB^A$, ordered by inclusion of subsets. The map
\begin{equation*}
   \alpha\colon \calS\to\calT\,,\quad F\mapsto \{[H,\Psi]\in\calB^A\mid E_{(H,\Psi)}\subseteq F\}
\end{equation*}
is an isomorphism of posets with inverse given by
\begin{equation*}
   \beta\colon \calT\to\calS\,,\quad \calZ\mapsto \sum_{[H,\Psi]\in\calZ} E_{(H,\Psi)}\,.
\end{equation*}
\end{theorem}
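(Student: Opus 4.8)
The plan is to show that $\alpha$ and $\beta$ are mutually inverse order-preserving maps, with the bulk of the work going into verifying $\alpha$ is well-defined (i.e., lands in $\calT$), that $\beta\circ\alpha=\id_\calS$, and that $\alpha\circ\beta=\id_\calT$.

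\textbf{Step 1: $\alpha$ is well-defined.} I must check that $\alpha(F)$ is closed. If $[H,\Psi]\in\alpha(F)$, so $E_{(H,\Psi)}\subseteq F$, and $[H,\Psi]\preccurlyeq[G,\Phi]$ for some $B^A$-pair $[G,\Phi]$, then by Proposition~\ref{prop E in E}(a) we have $E_{(G,\Phi)}\subseteq E_{(H,\Psi)}\subseteq F$, hence $[G,\Phi]\in\alpha(F)$. Both $\alpha$ and $\beta$ are visibly order-preserving: larger $F$ gives larger $\alpha(F)$, and larger $\calZ$ gives a larger sum $\beta(\calZ)$.

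\textbf{Step 2: $\beta\circ\alpha=\id_\calS$.} Fix a subfunctor $F$. The inclusion $\beta(\alpha(F))=\sum_{E_{(H,\Psi)}\subseteq F}E_{(H,\Psi)}\subseteq F$ is immediate. For the reverse inclusion, I need $F\subseteq\sum_{[H,\Psi]\in\alpha(F)}E_{(H,\Psi)}$, and here I will induct on $|G|$ to show $F(G)\subseteq\bigl(\sum_{[H,\Psi]\in\alpha(F)}E_{(H,\Psi)}\bigr)(G)$ for every finite group $G$. By Proposition~\ref{prop min pairs are BA}(a), $F(G)=\bigoplus_{(K,\Psi)\in[G\backslash\calX_F(G)]}\KK\, e_{(K,\Psi)}^G$, so it suffices to handle each basis idempotent $e_{(K,\Psi)}^G$ with $(K,\Psi)\in\calX_F(G)$. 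Restricting to $K$ and using Proposition~\ref{prop s comp res}(b) gives $0\neq e_{(K,\Psi)}^K\in F(K)$, so $(K,\Psi)\in\calX_F(K)$. Let $(L,\Theta)$ be a minimal pair for the subfunctor $E_{(K,\Psi)}$; it is a $B^A$-pair, lies in $\calX_F$ for the group $L$ by Proposition~\ref{prop min pairs are BA}(c) applied to $E_{(K,\Psi)}\subseteq F$ (so $[L,\Theta]\in\alpha(F)$), and by Proposition~\ref{prop min pairs of E}(b) there is $N\trianglelefteq K$ with $m_{(K,\Psi)}^N\neq 0$ and $(K/N,\Psi_N)\cong(L,\Theta)$. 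If $N\neq\{1\}$ then $|K/N|<|G|$ and induction (plus $\infl$-ing up, using that $e_{(K,\Psi)}^K$ lies in the subfunctor generated by $e_{(K/N,\Psi_N)}^{K/N}$ since $m_{(K,\Psi)}^N\neq0$) finishes this case; if $N=\{1\}$ then $(K,\Psi)\cong(L,\Theta)$ is itself a $B^A$-pair in $\alpha(F)$ and $e_{(K,\Psi)}^G\in E_{(K,\Psi)}(G)$ directly. The one subtlety: one must observe that $e_{(K,\Psi)}^K$ generates the same subfunctor as $e_{(K/N,\Psi_N)}^{K/N}$ when $m_{(K,\Psi)}^N\ne0$, because $\infl$ followed by the appropriate idempotent recovers $e_{(K,\Psi)}^K$ up to a nonzero scalar, using Proposition~\ref{prop s comp inf}(b) together with \cite[Proposition~2]{CoskunYilmaz2019}; this is essentially the reasoning in the proof of Proposition~\ref{prop E in E}(a).

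\textbf{Step 3: $\alpha\circ\beta=\id_\calT$.} Let $\calZ\in\calT$. The inclusion $\calZ\subseteq\alpha(\beta(\calZ))$ is clear since $E_{(H,\Psi)}\subseteq\beta(\calZ)$ for each $[H,\Psi]\in\calZ$. For the reverse, suppose $[G,\Phi]\in\alpha(\beta(\calZ))$, i.e., $[G,\Phi]$ is a $B^A$-pair with $E_{(G,\Phi)}\subseteq\sum_{[H,\Psi]\in\calZ}E_{(H,\Psi)}$. Evaluating at $G$, we get $e_{(G,\Phi)}^G\in\sum_{[H,\Psi]\in\calZ}E_{(H,\Psi)}(G)$, so $e_{(G,\Phi)}^G=\sum_i x_i$ with each $x_i\in E_{(H_i,\Psi_i)}(G)$, $[H_i,\Psi_i]\in\calZ$. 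Multiplying by the primitive idempotent $e_{(G,\Phi)}^G$ and using \cite[Proposition~2]{CoskunYilmaz2019} to pass to $\Delta$-multiplication, at least one $e_{(G,\Phi)}^G\cdot x_i\neq0$, whence $e_{(G,\Phi)}^G\in E_{(H_i,\Psi_i)}(G)$, i.e., $E_{(G,\Phi)}\subseteq E_{(H_i,\Psi_i)}$. Since $(G,\Phi)$ is a $B^A$-pair, Proposition~\ref{prop E in E}(b) (with roles as stated there, noting we need $[G,\Phi]$ to be the $B^A$-pair on the "smaller" side) gives $[G,\Phi]\preccurlyeq[H_i,\Psi_i]$; more carefully, since $E_{(G,\Phi)}\subseteq E_{(H_i,\Psi_i)}$ and both sides are generated by $B^A$-pairs, we may replace $(H_i,\Psi_i)$ by a minimal pair of $E_{(H_i,\Psi_i)}$ — which by Proposition~\ref{prop min pairs of E}(d) is $(H_i,\Psi_i)$ itself up to isomorphism when it is a $B^A$-pair — and apply Proposition~\ref{prop E in E}(b) to conclude $[H_i,\Psi_i]\preccurlyeq[G,\Phi]$. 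Wait — the direction matters; the correct reading is: $E_{(G,\Phi)}\subseteq E_{(H_i,\Psi_i)}$ with $(G,\Phi)$ a $B^A$-pair forces, by Proposition~\ref{prop E in E}(b) applied with $(H,\Psi)=(G,\Phi)$ and $(G,\Phi)=(H_i,\Psi_i)$, that $[G,\Phi]\preccurlyeq[H_i,\Psi_i]$. Then closedness of $\calZ$, together with $[H_i,\Psi_i]\in\calZ$ and $[H_i,\Psi_i]\preccurlyeq[G,\Phi]$ — no: closedness says if a member is $\preccurlyeq$ something, that something is in. We have $[H_i,\Psi_i]\in\calZ$ and we want $[G,\Phi]\in\calZ$, so we need $[H_i,\Psi_i]\preccurlyeq[G,\Phi]$. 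Thus the step to nail is orienting Proposition~\ref{prop E in E}: $E_{(G,\Phi)}\subseteq E_{(H_i,\Psi_i)}$ together with $(H_i,\Psi_i)$ being (replaceable by) a $B^A$-pair yields $[H_i,\Psi_i]\preccurlyeq[G,\Phi]$ by part (b) of that proposition, and closedness of $\calZ$ then gives $[G,\Phi]\in\calZ$.

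\textbf{Main obstacle.} The delicate point is Step 2, specifically the inductive descent: showing that any idempotent $e_{(K,\Psi)}^G$ with $(K,\Psi)\in\calX_F(G)$ lies in a sum of the functors $E_{(H,\Psi)}$ with $[H,\Psi]\in\alpha(F)$. One must carefully track that passing from $(K,\Psi)$ to its minimal pair stays inside $\calX_F$, that the deflation scalar $m_{(K,\Psi)}^N$ being nonzero lets us identify the subfunctor generated by $e_{(K,\Psi)}^K$ with that generated by $e_{(K/N,\Psi_N)}^{K/N}$, and that the induction on $|G|$ is set up so the smaller group $K/N$ (when $N\ne\{1\}$) genuinely has smaller order. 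Everything else is bookkeeping with the idempotent calculus already established.
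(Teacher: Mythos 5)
Your proposal is correct and follows essentially the same route as the paper's proof: decompose $F$ into the idempotents it contains via Proposition~\ref{prop min pairs are BA}(a), pass to minimal pairs (Propositions~\ref{prop min pairs are BA}(c) and \ref{prop min pairs of E}) to obtain $\beta(\alpha(F))=F$, and for $\alpha(\beta(\calZ))=\calZ$ locate $e_{(G,\Phi)}^G$ in a single summand $E_{(H_i,\Psi_i)}$ and apply Proposition~\ref{prop E in E}(b) together with closedness of $\calZ$. The only deviations are cosmetic: your induction on $|G|$ in Step~2 is superfluous, since Proposition~\ref{prop min pairs of E}(a) already gives $E_{(K,\Psi)}=E_{(L,\Theta)}$ with $[L,\Theta]\in\alpha(F)$ (so no descent to $K/N$ or use of $m_{(K,\Psi)}^N\neq 0$ is needed), and in Step~3 your final orientation of Proposition~\ref{prop E in E}(b) — with the $B^A$-pair $(H_i,\Psi_i)\in\calZ$ on the larger side of the inclusion, yielding $[H_i,\Psi_i]\preccurlyeq[G,\Phi]$ — is the correct one.
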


\begin{proof}
Clearly, $\alpha$ and $\beta$ are order-preserving.
Let $F\in\calS$. By Proposition~\ref{prop min pairs are BA}(a) we have
\begin{equation*}
   F= \sum_{\substack{G\\ (H,\Psi)\in\calX_F(G)}} \langle e_{(H,\Psi)}^G\rangle\,,
\end{equation*}
where $G$ runs through a set of representatives of the isomorphism classes of finite groups and $\langle e_{(H,\Psi)}^G\rangle$ denotes the subfunctor of $B_\KK^A$ generated by $e_{(H,\Psi)}^G$. For any finite group $G$ and any $(H,\Psi)\in\calX(G)$ one has $e_{(H,\Psi)}^G\in F(G)$ if and only if $e_{(H,\Psi)}^H\in F(H)$. In fact, $e_{(H,\Psi)}^H = e_{(H,\Psi)}\cdot\res^G_H(e_{(H,\Psi)}^G)$ by Proposition~\ref{prop s comp res} and $e_{(H,\Psi)}^G\in\KK\cdot\ind_H^G(e_{(H,\Psi)}^H)$ by Proposition~\ref{prop ind of e}. Thus
\begin{equation*}
   F= \sum_{\substack{H\\ (H,\Psi)\in\calXhat_F(H)}} E_{(H,\Psi)}\,,
\end{equation*}
where $H$ runs again through a set of representatives of the isomorphism classes of finite groups and $\calXhat_F(H)=\calXhat(H)\cap \calX_F(H)$. By Propositions~\ref{prop min pairs are BA}(c) and \ref{prop min pairs of E}(a), we obtain
\begin{equation*}
   F=\sum_{\substack{[H,\Psi]\in\calB^A\\ (H,\Psi)\in\calX_F(H)}} E_{(H,\Psi)} = \sum_{[H,\Psi]\in\alpha(F)} E_{(H,\Psi)} 
   =\beta(\alpha(F)) \,,
\end{equation*}
since $(H,\Psi)\in\calX_F(H)$ if and only if $E_{(H,\Psi)}\subseteq F$.

\smallskip
Let $\calZ$ be a closed subset of $\calB^A$. By definition of $\alpha$ and $\beta$ we have
\begin{equation*}
   \alpha(\beta(\calZ)) = \{ [H,\Psi]\in\calB^A\ \mid E_{(H,\Psi)}\subseteq \sum_{[G,\Phi]\in\calZ} E_{(G,\Phi)}\}\,.
\end{equation*}
The inclusion $\calZ\subseteq \alpha(\beta(\calZ))$ is obvious. Conversely, assume that $[H,\Psi]\in\calB^A$ satisfies $E_{(H,\Psi)}\subseteq \sum_{[G,\Phi]\in\calZ} E_{(G,\Phi)}$. Evaluation at $H$ and Proposition~\ref{prop min pairs are BA}(a) imply that there exists $[G,\Phi]\in\calZ$ with $e_{(H,\Psi)}^H\in E_{(G,\Phi)}(H)$, which implies $E_{(H,\Psi)}\subseteq E_{(G,\Phi)}$. Since $(G,\Phi)$ is a $B^A$-pair, Proposition~\ref{prop E in E}(b) implies $[G,\Phi]\preccurlyeq [H,\Psi]$. Since $[G,\Phi]\in\calZ$ and $\calZ$ is closed we obtain $[H,\Psi]\in\calZ$. Thus, $\alpha(\beta(\calZ))\subseteq \calZ$, and the proof is complete.
\end{proof}

\begin{Remark}\label{rem max closed subset}
(a) If $(G,\Phi)$ is a $B^A$-pair, then the subfunctor $E_{(G,\Phi)}$ of $B_\KK^A$ corresponds under the bijection in Theorem~\ref{thm lattice iso} to the subset $\calB^A_{\succcurlyeq [G,\Phi]}:=\{[H,\Psi]\in\calB^A\mid [G,\Phi]\preccurlyeq [H,\Psi]\}$. Clearly, $\calB^A_{\succ [G,\Phi]}:=\{[H,\Psi]\in\calB^A\mid [G,\Phi]\prec [H,\Psi]\}$ is the unique maximal closed subset of $\calB^A_{\succcurlyeq [G,\Phi]}$.

\smallskip
(b) For every element $[G,\Phi]\in\calB^A$ there exist only finitely many elements $[H,\Psi]\in\calB^A$ with $[H,\Psi]\preccurlyeq [G,\Phi]$. Therefore, every non-empty subset of $\calB^A$ has a minimal element.
\end{Remark}

%%%%%%%%%%%%%%%%%%% SECTION 9 %%%%%%%%%%%%%%%%%%%%%%%%%%%%%%%%%%%%%

\section{Composition factors of $B_\KK^A$}\label{sec composition factors of BA}

We keep the assumptions on $A$ and $\KK$ from Section~\ref{sec BA-pairs and the subfunctors E}. In this section we show that the composition factors of $B_\KK^A$ are parametrized by isomorphism classes of $B^A$-pairs.

\begin{nothing} Recall from \cite[Section~9]{BoltjeCoskun2018} that the simple $A$-fibered biset functors $S$ over $\KK$ are parametrized by isomorphism classes of certain quadruples $(G,K,\kappa,V)$. Here, $G$ is a minimal group for $S$, $(K,\kappa)\in\calM(G)$ is such that the idempotent $f_{(K,\kappa)}\in B_\KK^A(G,G)$ (see \cite[Subsection~4.3]{BoltjeCoskun2018}) does not annihilate $S(G)$, and $V:=S(G)$ is an irreducible $\KK\Gamma_{(G,K,\kappa)}$-module for the finite group $\Gamma_{(G,K,\kappa)}$ defined in \cite[6.1(c)]{BoltjeCoskun2018}. All we need for the analysis in the following theorem is that the idempotent $f_{(K,\kappa)}$ is in the $\KK$-span of standard basis elements $\left[\frac{G\times G}{U,\phi}\right]$ with $(U,\phi)\in\calM(G,G)$ such that $k_2(U)\ge K$, and that in the case $(K,\kappa)=(\{1\},1)$, the group $\Gamma_{(G,\{1\},1)}$ is the set of standard basis elements $\left[\frac{G\times G}{U,\phi}\right]$ of $B_\KK^A(G,G)$ with $p_1(U)=G=p_2(U)$ and $k_1(U)=\{1\}=k_2(U)$. The multiplication is given by $\cdot_G$. Thus, in this case, $\left[\frac{G\times G}{U,\phi}\right]=\tw_\alpha\cdot_G\isom_f$, where $f:=\eta_U\in\Aut(G)$ is defined by $U=\{(\eta_U(g),g)\mid g\in G\}$, and $\alpha\in G^*$ is given by $\alpha(g)=\phi(\eta_U(g),g)$ for $g\in G$. Mapping $\left[\frac{G\times G}{U,\phi}\right]$ to the element $(\alpha,\fbar)\in G^*\rtimes\Out(G)$ defines an isomorphism. Here, $\Out(G)$ acts on $G^*$ via $\lexp{\fbar}{\alpha}:=\alpha\circ f^*$. Moreover, $\Gamma_{(G,\{1\},1)}$ acts on $S(G)$ by $\cdot_G$.
\end{nothing}

\begin{proposition}\label{prop E modulo J}
Let $(G,\Phi)$ be a $B^A$-pair. The subfunctor $E_{(G,\Phi)}$ of $B_\KK^A$ has a unique maximal subfunctor $J_{(G,\Phi)}$, given by
\begin{equation*}
   J_{(G,\Phi)}=\sum_{[H,\Psi]\in\calB^A_{\succ [G,\Psi]}} E_{(H,\Psi)}\,.
\end{equation*}
The simple functor $S_{(G,\Phi)}:=E_{(G,\Phi)}/J_{(G,\Phi)}$ is isomorphic to $S_{(G,\{1\},1,V_\Phi)}$ where $V_\Phi$ is the irreducible $\KK [G^*\rtimes\Out(G)]$-module 
\begin{equation*}
   V_\Phi:=\Ind_{G^*\rtimes\Out(G)_\Phi}^{G^*\rtimes \Out(G)} (\KK_{\Phitilde})\,,
\end{equation*} 
with $\Phitilde\in\Hom(G^*\rtimes\Out(G)_\Phi,\KK^\times)$ defined by $\Phitilde(\phi,\fbar):=\Phi(\phi)$ for $\phi\in G^*$ and $f\in\Aut(G)$.
\end{proposition}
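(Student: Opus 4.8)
The plan is to verify the three assertions in turn: that $E_{(G,\Phi)}$ has a unique maximal subfunctor, that this subfunctor is the indicated sum, and that the quotient is the named simple functor. For the first two, I would use the lattice isomorphism of Theorem~\ref{thm lattice iso}. Under $\alpha$, the subfunctor $E_{(G,\Phi)}$ corresponds to the closed set $\calB^A_{\succcurlyeq[G,\Phi]}$ (Remark~\ref{rem max closed subset}(a)). The subfunctors of $E_{(G,\Phi)}$ correspond to the closed subsets of $\calB^A$ contained in $\calB^A_{\succcurlyeq[G,\Phi]}$, i.e. the closed subsets not necessarily containing $[G,\Phi]$ itself; among these, the one omitting $[G,\Phi]$ is exactly $\calB^A_{\succ[G,\Phi]}$, which is the unique maximal proper closed subset of $\calB^A_{\succcurlyeq[G,\Phi]}$ since $[G,\Phi]$ is its unique minimal element (using that every element lies above only finitely many elements, Remark~\ref{rem max closed subset}(b), so any proper closed subset must omit $[G,\Phi]$ and is therefore contained in $\calB^A_{\succ[G,\Phi]}$). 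Translating back through $\beta$ gives both that $J_{(G,\Phi)}:=\sum_{[H,\Psi]\in\calB^A_{\succ[G,\Phi]}}E_{(H,\Psi)}$ is the unique maximal subfunctor of $E_{(G,\Phi)}$ and that the quotient $S_{(G,\Phi)}$ is simple.

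For the identification of $S_{(G,\Phi)}$ with the simple functor labelled by the quadruple $(G,\{1\},1,V_\Phi)$, I would first argue that $G$ is a minimal group for $S_{(G,\Phi)}$: since $(G,\Phi)$ is a $B^A$-pair, $G$ is a minimal group for $E_{(G,\Phi)}$ (Proposition~\ref{prop E and m}), and $J_{(G,\Phi)}(G)\subsetneq E_{(G,\Phi)}(G)$ because $[G,\Phi]\notin\calB^A_{\succ[G,\Phi]}$ forces $e_{(G,\Phi)}^G\notin J_{(G,\Phi)}(G)$ (by the translation above, $e_{(G,\Phi)}^G\in E_{(H,\Psi)}(G)$ would give $[G,\Phi]\succcurlyeq[H,\Psi]$ hence via a $B^A$-pair argument $[G,\Phi]\succ[G,\Phi]$, impossible). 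Next I would compute $S_{(G,\Phi)}(G)=E_{(G,\Phi)}(G)/J_{(G,\Phi)}(G)$. By Proposition~\ref{prop min pairs of E}(d), $E_{(G,\Phi)}(G)=\bigoplus\KK e_{(G,\Phi')}^G$ over the $\Out(G)$-orbit of $(G,\Phi)$ in $\calXhat(G)$; one checks that each $J_{(H,\Psi)}$ with $[H,\Psi]\succ[G,\Phi]$ contributes nothing to the $G$-component spanned by this orbit (again by the pair/minimality argument), so $S_{(G,\Phi)}(G)$ has $\KK$-basis the images of $e_{(G,{}^{\fbar}\Phi)}^G$, $\fbar\in\Out(G)/\Out(G)_\Phi$. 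Finally I would identify the action of $\Gamma_{(G,\{1\},1)}\cong G^*\rtimes\Out(G)$ via $\cdot_G$ on this module using Proposition~\ref{prop isom and twist on idempotents}: $\tw_\alpha$ acts on $e_{(G,\Phi)}^G$ by the scalar $\Phi(\alpha)$ and $\isom_f$ permutes the idempotents by $e_{(G,\Phi)}^G\mapsto e_{(G,\Phi\circ(f^{-1})^*)}^G$. This is precisely the induced module $\Ind_{G^*\rtimes\Out(G)_\Phi}^{G^*\rtimes\Out(G)}(\KK_{\Phitilde})$, with $\Phitilde(\phi,\fbar)=\Phi(\phi)$ on the stabilizer, so by the parametrization recalled before the proposition the simple functor with evaluation $V_\Phi$ at the minimal group $G$ and parameter $(\{1\},1)$ is $S_{(G,\{1\},1,V_\Phi)}$.

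To close the loop I must check that $S_{(G,\Phi)}$ really corresponds to the parameter $(K,\kappa)=(\{1\},1)$ and not to some larger $K$. This is where I would invoke the structural fact recalled in the preceding \texttt{nothing}: the idempotent $f_{(K,\kappa)}$ lies in the span of basis elements $\bigl[\frac{G\times G}{U,\phi}\bigr]$ with $k_2(U)\ge K$. If $(K,\kappa)$ were the parameter of $S_{(G,\Phi)}$ with $K\neq\{1\}$, then $f_{(K,\kappa)}$ would not annihilate $S_{(G,\Phi)}(G)$, so some such basis element with $k_2(U)\supseteq K\neq\{1\}$ would act nontrivially on $e_{(G,\Phi)}^G$ modulo $J_{(G,\Phi)}$; but any basis element with $k_2(U)\neq\{1\}$ factors through $\bigl[\frac{G\times G}{U,1}\bigr]$ up to twists (Theorem~\ref{thm can decomp}), hence through $\infl$ and $\defl$ relative to $G/k_2(U)$, and Proposition~\ref{prop m number}(a) together with $(G,\Phi)$ being a $B^A$-pair forces $\defl^G_{G/k_2(U)}(e_{(G,\Phi)}^G)=0$, so such an element sends $e_{(G,\Phi)}^G$ to $0$ in $B_\KK^A(G)$ itself. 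Hence the parameter is $(\{1\},1)$. I expect this last point — pinning down $(K,\kappa)=(\{1\},1)$ via the deflation-vanishing characterization of $B^A$-pairs — to be the main obstacle, together with the bookkeeping needed to see that the $\Gamma_{(G,\{1\},1)}$-action is literally the semidirect-product action used to define $V_\Phi$; both reduce to careful application of results already established, notably Propositions~\ref{prop m number}, \ref{prop isom and twist on idempotents}, \ref{prop min pairs of E}(d) and Theorem~\ref{thm can decomp}.
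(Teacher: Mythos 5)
Your proposal is correct and follows essentially the same route as the paper: Theorem~\ref{thm lattice iso} together with Remark~\ref{rem max closed subset} gives the unique maximal subfunctor $J_{(G,\Phi)}$ and simplicity of the quotient, minimality of $G$ follows since the $E_{(H,\Psi)}$ with $[H,\Psi]\succ[G,\Phi]$ vanish at $G$, the parameter $(\{1\},1)$ is pinned down by showing every basis element $\left[\frac{G\times G}{U,\phi}\right]$ with $k_2(U)\neq\{1\}$ kills the idempotents $e_{(G,\Phi')}^G$ via the deflation-vanishing characterization of $B^A$-pairs, and $S_{(G,\Phi)}(G)$ is identified with the monomial module $V_\Phi$ exactly as in the paper. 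The one caveat (shared by the paper's own terse wording ``factors through $q(U)$'') is that writing $\left[\frac{G\times G}{U,\phi}\right]$ as $\left[\frac{G\times G}{U,1}\right]$ up to twists requires $\phi$ to extend to $(G\times G)^*$ (Lemma~\ref{lem ext and fact}); when no such extension exists, the vanishing on $e_{(G,\Phi')}^G$ follows instead from Lemma~\ref{lem idem and ext} or Lemma~\ref{lem double prod}, so the argument goes through.
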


\begin{proof}
By Remark~\ref{rem max closed subset}(a) and Theorem~\ref{thm lattice iso}, $J_{(G,\Phi)}$ is the unique maximal subfunctor of $E_{(G,\Phi)}$.
Thus, the functor $S:=S_{(G,\Phi)}$ is a simple object in $\calF_\KK^A$. Moreover, $G$ is a minimal group for $S$, since 
$G$ is a minimal group for $E_{(G,\Phi)}$ and $E_{(H,\Psi)}(G)=\{0\}$ for all $[H,\Psi]\in\calB^A_{\succ [G,\Phi]}$. 

Let $(U,\phi)\in\calM(G\times G)$ with $k_2(U)\neq \{1\}$. Then $\left[\frac{G\times G}{U,\phi}\right]$ factors through the group $q(U)$ which has smaller order than $G$. Thus, $\left[\frac{G\times G}{U,\phi}\right]\cdot_G e_{(G,\Phi')}^G=0$ for all $(G,\Phi')\in \calXhat(G)$ with $(G',\Phi')=_{\Out(G)} (G,\Phi)$. By Proposition~\ref{prop min pairs of E}(d) this yields $\left[\frac{G\times G}{U,\phi}\right]\cdot_G S(G)=\{0\}$. Thus, $f_{(K,\kappa)}\cdot_G S(G)=0$ for all $(K,\kappa)$ with $|K|>1$.

This implies that $S$ is parametrized by the quadruple $(G,\{1\},1,V)$, with $V=S(G)$ viewed as $\KK\Gamma_{(G,\{1\},1)}$-module. Since $S(G)$ is the $\KK$-span of the idempotents $e_{(G,\Phi')}^G$, with $(G,\Phi')$ running through the $\Out(G)$-orbit of $(G,\Phi)$, and since $\tw_\alpha\cdot_G\isom_{f}\cdot_G e_{(G,\Phi')}^G=\Phi(\alpha)\cdot e_{(G,\Phi'\circ f^*)}^G$ for all $\alpha\in G^*$ and $f\in\Aut(G)$ and $(G,\Phi')$, the $\KK\Gamma_{(G,\{1\},1)}$-module $S(G)$ is monomial. The stabilizer of the one-dimensional subspace $\KK e_{(G,\Phi)}^G$ is equal to $G^*\rtimes \Out(G)_\Phi$ and this group acts on $\KK e_{(G,\Phi)}^G$ via $\Phitilde$. Thus, $S(G)\cong V_\Phi$ as $\KK\Gamma_{(G,\{1\},1)}$-module and the proof is complete.
\end{proof}

\begin{theorem}\label{thm simple subquotients}
Let $F'\subset F\subseteq B^A_\KK$ be subfunctors in $\calF_\KK^A$ such that $F/F'$ is simple. Then there exists a unique $[G,\Phi]\in\calB^A$ such that $E_{(G,\Phi)}\subseteq F$ and $E_{(G,\Phi)}\not\subseteq F'$. Moreover, $E_{(G,\Phi)}+F'=F$, $E_{(G,\Phi)}\cap F'=J_{(G,\Phi)}$, and $F/F'\cong S_{(G,\Phi)}$.
\end{theorem}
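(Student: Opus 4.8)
The plan is to combine Theorem~\ref{thm lattice iso} with the structure result on minimal pairs of the $E$-functors (Proposition~\ref{prop min pairs of E}) and the identification of $S_{(G,\Phi)}$ in Proposition~\ref{prop E modulo J}. First I would use Proposition~\ref{prop min pairs are BA}(a), applied to both $F$ and $F'$: since $F'\subsetneq F$, there is a finite group $G_0$ and a pair $(H,\Psi)\in\calX_F(G_0)\setminus\calX_{F'}(G_0)$, i.e.\ $e_{(H,\Psi)}^{G_0}\in F(G_0)$ but $e_{(H,\Psi)}^{G_0}\notin F'(G_0)$. As in the proof of Theorem~\ref{thm lattice iso}, restriction and induction (Propositions~\ref{prop s comp res}(b), \ref{prop ind of e}) let me replace $G_0$ by $H$, so $E_{(H,\Psi)}\subseteq F$ but $E_{(H,\Psi)}\not\subseteq F'$. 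Among all such pairs I would choose one, passing to a minimal pair $(G,\Phi)$ of $E_{(H,\Psi)}$ (Proposition~\ref{prop min pairs of E}(a),(d)); then $(G,\Phi)$ is a $B^A$-pair, $E_{(G,\Phi)}=E_{(H,\Psi)}\subseteq F$, and $E_{(G,\Phi)}\not\subseteq F'$.

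Next I would establish $E_{(G,\Phi)}+F'=F$. Since $F/F'$ is simple and $E_{(G,\Phi)}\not\subseteq F'$, the image of $E_{(G,\Phi)}$ in $F/F'$ is a non-zero subfunctor, hence all of $F/F'$, which is exactly the asserted equality. Then I would identify $E_{(G,\Phi)}\cap F'$. Because $E_{(G,\Phi)}/(E_{(G,\Phi)}\cap F')\cong (E_{(G,\Phi)}+F')/F'=F/F'$ is simple, $E_{(G,\Phi)}\cap F'$ is a maximal subfunctor of $E_{(G,\Phi)}$; by Proposition~\ref{prop E modulo J} the functor $E_{(G,\Phi)}$ has a \emph{unique} maximal subfunctor, namely $J_{(G,\Phi)}$, so $E_{(G,\Phi)}\cap F'=J_{(G,\Phi)}$. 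Consequently
\begin{equation*}
  F/F' = (E_{(G,\Phi)}+F')/F' \cong E_{(G,\Phi)}/(E_{(G,\Phi)}\cap F') = E_{(G,\Phi)}/J_{(G,\Phi)} = S_{(G,\Phi)}\,,
\end{equation*}
giving the claimed isomorphism.

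It remains to prove uniqueness of $[G,\Phi]$. Suppose $[G',\Phi']\in\calB^A$ also satisfies $E_{(G',\Phi')}\subseteq F$ and $E_{(G',\Phi')}\not\subseteq F'$. Running the first-paragraph argument with $(G',\Phi')$ in place of $(G,\Phi)$, and noting that it too must be (isomorphic to) its own unique minimal pair since it is a $B^A$-pair (Proposition~\ref{prop min pairs of E}(d)), the same computation gives $F/F'\cong S_{(G',\Phi')}$. Since simple $A$-fibered biset functors are determined up to isomorphism by their parameter and $S_{(G,\Phi)}\cong S_{(G',\Phi')}$, Proposition~\ref{prop E modulo J} together with the parametrization recalled from \cite{BoltjeCoskun2018} forces the quadruples $(G,\{1\},1,V_\Phi)$ and $(G',\{1\},1,V_{\Phi'})$ to coincide up to isomorphism, hence $G\cong G'$ and $V_\Phi\cong V_{\Phi'}$ as $\KK[G^*\rtimes\Out(G)]$-modules, which in turn forces $\Phi$ and $\Phi'$ to lie in the same $\Out(G)$-orbit, i.e.\ $[G,\Phi]=[G',\Phi']$ in $\calB^A$. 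Alternatively, and more self-containedly, I could argue directly via Theorem~\ref{thm lattice iso}: both $E_{(G,\Phi)}$ and $E_{(G',\Phi')}$ are contained in $F$ but not in $F'$, so in $\alpha(F)\setminus\alpha(F')$; any two such closed-set elements that are $\preccurlyeq$-minimal in $\alpha(F)\setminus\alpha(F')$ must be equal, because the subfunctor lattice interval $[F',F]$ has length one and therefore $\alpha(F')$ is obtained from $\alpha(F)$ by removing a single $\preccurlyeq$-upward-closed ``principal-type'' piece (cf.\ Remark~\ref{rem max closed subset}(a)).

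The main obstacle I expect is the uniqueness part: one must be careful that the pair extracted in the first paragraph is canonical, i.e.\ does not depend on the choices of $G_0$, of $(H,\Psi)$, or of the minimal pair. The cleanest route is to observe that by Theorem~\ref{thm lattice iso} the assignment $F\mapsto\alpha(F)$ is a lattice isomorphism onto closed subsets of $\calB^A$, so the covering pair $F'\subsetneq F$ corresponds to a covering pair $\alpha(F')\subsetneq\alpha(F)$ of closed subsets; such a covering pair differs in exactly the elements $[H,\Psi]$ with $\beta$-value at least some fixed $B^A$-pair class, and the unique minimal such class is the sought $[G,\Phi]$. This reduces the whole uniqueness question to the combinatorics of the poset $\calB^A$ and Remark~\ref{rem max closed subset}, avoiding any delicate module-theoretic comparison.
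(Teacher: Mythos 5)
Your proposal is correct, and its middle part (showing $E_{(G,\Phi)}+F'=F$, identifying $E_{(G,\Phi)}\cap F'$ with $J_{(G,\Phi)}$ via the unique maximal subfunctor from Proposition~\ref{prop E modulo J}, and concluding $F/F'\cong S_{(G,\Phi)}$) is exactly the paper's argument; where you diverge is in how existence and, above all, uniqueness of $[G,\Phi]$ are obtained. The paper gets both in one stroke from Theorem~\ref{thm lattice iso}: since $F'$ is maximal in $F$, $\alpha(F')$ is a maximal closed subset of $\alpha(F)$, and together with Remark~\ref{rem max closed subset} this forces $\alpha(F)\smallsetminus\alpha(F')$ to be a single class $\{[G,\Phi]\}$, so that $E_{(H,\Psi)}\subseteq F$ and $E_{(H,\Psi)}\not\subseteq F'$ holds precisely for $[H,\Psi]=[G,\Phi]$. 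You instead re-derive existence by hand (Proposition~\ref{prop min pairs are BA}(a), restriction/induction, passage to the minimal pair via Proposition~\ref{prop min pairs of E}) — fine, this replays part of the proof of Theorem~\ref{thm lattice iso} — and you prove uniqueness module-theoretically: any competing $[G',\Phi']$ also gives $F/F'\cong S_{(G',\Phi')}$, and then the quadruple parametrization of simple functors from \cite{BoltjeCoskun2018} plus recovering the $\Out(G)$-orbit of $\Phi$ from $\Res_{G^*}V_\Phi$ (Clifford/Mackey) yields $[G,\Phi]=[G',\Phi']$. That works, but it buys the result at the cost of invoking the injectivity of the parametrization and a representation-theoretic step the paper never needs, whereas the paper's lattice argument is internal and cheaper. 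One caution: your ``more self-contained'' alternative for uniqueness is not right as sketched — the two classes need not be $\preccurlyeq$-minimal in $\alpha(F)\smallsetminus\alpha(F')$, and deleting an upward-closed piece from a closed set does not automatically leave a closed set; the clean argument is that for any $[G,\Phi]$ in the difference, $\alpha(F')\cup\calB^A_{\succcurlyeq [G,\Phi]}$ is a closed set strictly containing $\alpha(F')$ and contained in $\alpha(F)$, so maximality forces it to equal $\alpha(F)$, whence every element of the difference is $\succcurlyeq[G,\Phi]$ and the difference is a singleton. Since your primary uniqueness argument stands on its own, the proposal as a whole is sound.
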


\begin{proof}
Since $\alpha(F')$ is a maximal subset of $\alpha(F)$ and both are closed, it follows from Theorem~\ref{thm lattice iso} and Remark~\ref{rem max closed subset} that $\alpha(F)\smallsetminus\alpha(F')=\{[G,\Phi]\}$ for a unique $[G,\Phi]\in\calB^A$. For any $[H,\Psi]\in\calB^A$ one has $E_{(H,\Psi)}\subseteq F$ and $E_{(H,\Psi)}\not \subseteq F'$ if and only if $[H,\Psi]\in\alpha(F)$ but $[H,\Psi]\notin\alpha(F')$. Thus, the first condition is equivalent to $[H,\Psi]=[G,\Phi]$. Further, we have $F'\subset F'+E_{(G,\Phi)}\subseteq F$ which implies $F'+E_{(G,\Phi})=F$, since $F/F'$ is simple. Thus, $0\neq E_{(G,\Phi)}/(E_{(G,\Phi)}\cap F') \cong (E_{(G,\Phi)}+F')/F' =F/F'$, and by Proposition~\ref{prop E modulo J} we obtain $E_{(G,\Phi)}\cap F'=J_{(G,\Phi)}$ so that $F/F'\cong E_{(G,\Phi)}/J_{(G,\Phi)}\cong S_{(G,\Phi)}$.
\end{proof}

%%%%%%%%%%%%%%%%%%% SECTION 10 %%%%%%%%%%%%%%%%%%%%%%%%%%%%%%%%%%%%%

\section{The case $A\le\KK^\times$}\label{sec A in K}

In this section we assume that $A$ is a subgroup of the unit group of a field $\KK$ of characteristic $0$. Then the assumptions on $A$ and $\KK$ from the beginnings of Sections~\ref{sec primitive idempotents}--\ref{sec composition factors of BA} are satisfied. This special case has been used for instance in the theory of canonical induction formulas, see \cite{Boltje1998b}. This assumption was also used in \cite{Barker2004} and \cite{CoskunYilmaz2019}. By double duality it allows to consider pairs $(G,gO(G))$ for a normal subgroup $O(G)$ of $G$ instead of pairs $(G,\Phi)$ with $\Phi\in\Hom(G^*,\KK^\times)$. This section makes this translation precise and also translates previously defined features for pairs $(G,\Phi)$ to features for pairs $(G,gO(G))$.

\medskip
For any finite group $G$ we have a homomorphism
\begin{equation*}
   \zeta_G\colon G\to \Hom(G^*,\KK^\times)\,,\quad g\mapsto \varepsilon_g\,,\quad 
   \text{with $\varepsilon_g(\phi):=\phi(g)$,}
\end{equation*}
for $\phi\in G^*$. Note that $\zeta_G$ is functorial in $G$, i.e., if $f\colon G\to H$ is a group homomorphism then $\zeta_H\circ f= \Hom(f^*,\KK^\times)\circ \zeta_G$.
We set
\begin{equation*}
   O^A(G):=O(G):=\ker(\zeta_G)=\bigcap_{\phi\in G^*}\ker(\phi)\,,
\end{equation*}
which is a normal subgroup of $G$ containing the commutator subgroup $[G,G]$ of $G$. Thus, we obtain an injective homomorphism $\zetabar_G\colon G/O(G)\to \Hom(G^*,\KK^\times)$.

\begin{proposition}\label{prop zeta is surjective}
Let $G$ be a finite group.

\smallskip
{\rm (a)} The homomorphism $\zeta_G$ is surjective and $\zetabar_G\colon G/O(G)\myiso \Hom(G^*,\KK^\times)$ is an isomorphism.

\smallskip
{\rm (b)} The subgroup $O(G)$ is the smallest subgroup $[G,G]\le M\le G$ such that $A$ has an element of order $\exp(G/M)$.

\smallskip
{\rm (c)} For any normal subgroup $N$ of $G$ one has $O(G/N)=O(G)N/N$. 
\end{proposition}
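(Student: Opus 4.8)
I would prove parts (a), (b), (c) in that order, using (a) inside (b) and both inside (c).

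\emph{Part (a).} The idea is to identify $\zeta_G$ as a double-duality map. Since $G$ is finite, $G^* = \Hom(G,A)$ factors through $G^{\mathrm{ab}} := G/[G,G]$ and lands in the finite subgroup $C := \tor_{\exp(G)}(A)$ of $A$, so $G^* = \Hom(G^{\mathrm{ab}}, C)$. The crucial observation is that $C$, being a finite subgroup of $\KK^\times$, is cyclic, say of order $n$, and since it contains an element of order $n$ it must equal $\mu_n(\KK)$, the group of all $n$-th roots of unity in $\KK$; in particular $\KK$ has a primitive $n$-th root of unity. As $\exp(G^*)$ divides $n$, every homomorphism $G^* \to \KK^\times$ has image in $\mu_n(\KK) = C$, so $\Hom(G^*,\KK^\times) = \Hom(G^*, C)$, and $\zeta_G$ is then the composite of $G \to G^{\mathrm{ab}}$ with the canonical biduality map $G^{\mathrm{ab}} \to \Hom(\Hom(G^{\mathrm{ab}},C),C)$ of the finite abelian group $G^{\mathrm{ab}}$ with coefficients in the cyclic group $C \cong \ZZ/n\ZZ$. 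I would then invoke the standard fact (by reduction to the cyclic case $\ZZ/d\ZZ$, where it is an elementary count) that this biduality map is surjective with kernel $n\,G^{\mathrm{ab}}$. Since $\ker(\zeta_G) = O(G)$ by definition, $\zeta_G$ is surjective, $\zetabar_G \colon G/O(G) \myiso \Hom(G^*,\KK^\times)$ is an isomorphism, and as a byproduct $G/O(G) \cong G^{\mathrm{ab}}/n\,G^{\mathrm{ab}}$ canonically, which I reuse below.

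\emph{Part (b).} First I would check $O(G)$ is itself admissible: $[G,G] \le O(G)$ because every $\phi \in G^*$ kills commutators, and $G/O(G) \cong \Hom(G^*,C)$ has exponent dividing $|C| = n$, so the cyclic group $C = \mu_n(\KK) \le A$ contains an element of order $\exp(G/O(G))$. For minimality, given $[G,G] \le M \le G$ with $A$ containing an element of order $e := \exp(G/M)$, I would note $e \mid \exp(G)$ forces that element into $C$, hence $e \mid n$; then $G/M$ is a quotient of $G^{\mathrm{ab}}$ of exponent dividing $n$, hence a quotient of $G^{\mathrm{ab}}/n\,G^{\mathrm{ab}} \cong G/O(G)$, and the resulting epimorphism $G/O(G) \to G/M$ composed with $G \to G/O(G)$ is the canonical projection $G \to G/M$; therefore $O(G) \le M$.

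\emph{Part (c).} One inclusion is purely formal: writing $\pi\colon G \to G/N$, for $g \in O(G)$ and any $\psi \in (G/N)^*$ the composite $\psi\circ\pi$ lies in $G^*$, so $\psi(\pi(g)) = 1$; as $\psi$ is arbitrary, $\pi(g) \in O(G/N)$, giving $O(G)N/N \subseteq O(G/N)$. For the reverse inclusion I would apply the characterization from (b) to $G/N$: the subgroup $O(G)N/N$ contains $[G/N,G/N] = [G,G]N/N$, and $(G/N)/(O(G)N/N) \cong G/O(G)N$ is a quotient of $G/O(G)$, so its exponent $d$ divides $\exp(G/O(G))$; since (b) supplies an element of $A$ of order $\exp(G/O(G))$, a suitable power of it has order $d$. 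Hence $O(G)N/N$ is among the subgroups of $G/N$ of which $O(G/N)$ is the smallest, so $O(G/N) \subseteq O(G)N/N$, and equality follows.

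\emph{Main obstacle.} I expect the only genuine difficulty to be setting up (a): one must notice that $\tor_{\exp(G)}(A)$ is forced to be the \emph{full} group $\mu_n(\KK)$ of $n$-th roots of unity (so that $A$, though possibly small, is large enough for duality), and one must track exponents carefully, since $\exp(G^{\mathrm{ab}})$ need not divide $n$ — hence $\zeta_G$ is in general not injective and the correct double dual is that of $G^{\mathrm{ab}}/n\,G^{\mathrm{ab}}$. Once (a) is in place, parts (b) and (c) are routine consequences of the universal property proved in (b).
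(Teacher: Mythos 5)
Your proof is correct, and its overall skeleton matches the paper's: for (a) the paper likewise reduces via functoriality of $\zeta$ to the abelian and then cyclic case and counts, which is exactly the content of your biduality fact for a finite abelian group with coefficients in the cyclic group $C=\mu_n(\KK)=\tor_{\exp(G)}(A)$; for (c) the paper uses precisely your two inclusions, one formal (characters of $G/N$ pull back to $G$) and one via the minimality characterization of (b) applied to $G/N$. The genuine divergence is in part (b). The paper first shows that the family of admissible subgroups $M$ is closed under intersection (using that element orders in $A$ admit least common multiples, which holds because the relevant torsion of $A\le\KK^\times$ is cyclic), so a smallest one exists; it notes each $\ker(\phi)$, hence $O(G)$, is admissible; and it proves minimality by writing $G/M$ as a product of cyclic groups whose orders occur as element orders in $A$ and constructing $\phi_1,\dots,\phi_r\in G^*$ with $\bigcap_i\ker(\phi_i)=M$. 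You instead extract from (a) the extra information $O(G)/[G,G]=n\,G^{\mathrm{ab}}$ (equivalently $G/O(G)\cong G^{\mathrm{ab}}/n\,G^{\mathrm{ab}}$ canonically) and obtain minimality from the observation that any quotient $G/M$ with $[G,G]\le M$ and exponent dividing $n$ factors through $G/O(G)$. Your route is shorter and avoids both the intersection-closure step and the explicit character construction, at the cost of having to verify the kernel statement in the biduality fact (which you do correctly; in the cyclic case $n(\ZZ/d\ZZ)=\gcd(n,d)\ZZ/d\ZZ$ is exactly the kernel of evaluation), whereas the paper's argument for (b) is self-contained and uses only the surjectivity statement from (a).
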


\begin{proof}
(a) Applying the functoriality with respect to the natural epimorphism $f\colon G\to G/[G,G]$, and using that $f^*$ is an isomorphism, it suffices to show the statement when $G$ is abelian. Since $\Hom(-^*,\KK^\times)$ preserves direct products of abelian groups, we are reduced to the case that $G$ is cyclic. Using again the functoriality with respect to the natural epimorphism onto the largest quotient of $G$ whose order occurs as an element order in $A$, we are reduced to the case that $G$ is cyclic of order $n$ and $A$ has an element of order $n$. In this case it is easy to see that $\zeta_G$ is injective and that $G$ and $\Hom(G^*,\KK^\times)$ have the same order.

\smallskip
(b) First note that if $M_1$ and $M_2$ have the stated property, then also $M_1\cap M_2$ has this property. In fact, $G/(M_1\cap M_2)$ is isomorphic to a subgroup of $G/M_1\times G/M_2$, whose exponent is equal to the order of an element in $A$. Here we use that if elements $a$ and $b$ in $A$ have orders $k$ and $l$ respectively, then $A$ has an element whose order is the least common multiple of $k$ and $l$. Thus, there exists a smallest subgroup $M$ with the stated property. Clearly, $\ker(\phi)$ has the property for every $\phi\in G^*$. Therefore, also $O(G)$ has the desired property. Conversely, if $M$ has the property, then by writing $G/M$ as a direct product of $n$ cyclic groups whose orders are achieved as element order in $A$, it is easy to construct elements $\phi_1,\ldots,\phi_n\in G^*$ such that $\bigcap_{i=1}^n \ker(\phi_i)= M$, implying that $O(G)\le M$.

\smallskip
(c) Since the exponent of $G/O(G)$ is equal to the order of an element of $A$ also the exponent of $(G/N)/(O(G)N/N)\cong G/(O(G)N)$ is equal to the order of an element of $A$. Thus, $O(G/N)\le O(G)N/N$. Conversely, 
\begin{equation*}
   O(G)=\bigcap_{\phi\in G^*}\ker(\phi) \le \bigcap_{\substack{\phi\in G^*\\ \phi|_N =1}} \ker(\phi)\,,
\end{equation*}
and taking images in $G/N$ yields the reverse inclusion.
\end{proof}

For any finite group $G$, Proposition~\ref{prop zeta is surjective}(a) yields a bijection between the set of pairs of the form $(G,\Phi)$, with $\Phi\in\Hom(G^*,\KK^\times)$ and the set of pairs $(G,gO(G))$ with $gO(G)\in G/O(G)$. More precisely, we identify $(G,gO(G))$ with $(G,\varepsilon_g)$. The following proposition translates various relevant features of pairs $(G,\Phi)$ to features of the corresponding pairs $(G,gO(G))$. The proofs are straightforward and left to the reader.

\begin{proposition}\label{prop new pairs}
Let $G$ and $H$ be finite groups.

\smallskip
{\rm (a)} Let $g\in G$ and $h\in H$. Then $(G,gO(G))\cong (H,hO(H))$ if and only if there exists an isomorphism $f\colon G\to H$ such that $f(g)O(H)= hO(H)$.

\smallskip
{\rm (b)} Let $N$ be a normal subgroup of $G$, let $g\in G$, and set $\Phi:=\varepsilon_g$. Then $\Phi_N=\varepsilon_{gN}$.

\smallskip
{\rm (c)} Let $g\in G$ and $h\in H$. Then $(H,\varepsilon_h)\preccurlyeq(G,\varepsilon_g)$ if and only if there exists a normal subgroup $N$ of $G$ and an isomorphism $f\colon H\myiso G/N$ with $f(h)\in gO(G)N$.

\smallskip
{\rm (d)} Let $K\le G$ and $g\in G$. Then $\varepsilon_g|_{K^\perp}=1$ if and only if $g\in KO(G)$.
\end{proposition}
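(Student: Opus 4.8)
The plan is to derive all four statements from the dictionary provided by the isomorphism $\zetabar_G\colon G/O(G)\myiso\Hom(G^*,\KK^\times)$ of Proposition~\ref{prop zeta is surjective}(a). Two preliminary facts will be used throughout. First, for a homomorphism $f\colon H\to G$ and $h\in H$ one computes directly from the definitions that $\varepsilon_h\circ f^*=\varepsilon_{f(h)}$ in $\Hom(G^*,\KK^\times)$ (this is the naturality of $\zeta\colon g\mapsto\varepsilon_g$ recorded before the proposition); in particular, $f(O(H))=O(G)$ whenever $f$ is an isomorphism. Second, for the pair $(G,\Phi)$ itself the quantity $\Phi_N$ of Proposition~\ref{prop s comp inf}(a) reduces to $\Phi\circ\nu^*$, where $\nu\colon G\to G/N$ is the natural epimorphism, so the same computation with $f=\nu$ gives $(\varepsilon_g)_N=\varepsilon_{gN}$ --- which is exactly part (b).

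For part (a) I would unwind Definition~\ref{def BA-pair}(b): under the identification $(G,gO(G))\leftrightarrow(G,\varepsilon_g)$, the relation $(G,\varepsilon_g)\cong(H,\varepsilon_h)$ asserts the existence of an isomorphism $f\colon H\myiso G$ with $\varepsilon_h\circ f^*=\varepsilon_g$, i.e.\ $\varepsilon_{f(h)}=\varepsilon_g$; by injectivity of $\zetabar_G$ this is $f(h)O(G)=gO(G)$, and passing to $f^{-1}$ yields the symmetric formulation in the statement. Part (c) then follows by combining (a) and (b): by Definition~\ref{def BA-pair}(c), $(H,\varepsilon_h)\preccurlyeq(G,\varepsilon_g)$ means there is $N\trianglelefteq G$ with $(H,\varepsilon_h)\cong(G/N,(\varepsilon_g)_N)=(G/N,\varepsilon_{gN})$, which by (a) is the existence of an isomorphism $f\colon H\myiso G/N$ with $f(h)\,O(G/N)=(gN)\,O(G/N)$; since $O(G/N)=O(G)N/N$ by Proposition~\ref{prop zeta is surjective}(c), the coset $(gN)\,O(G/N)$ is the image of $gO(G)N$ in $G/N$, so the condition reads $f(h)\in gO(G)N$.

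For part (d) I would argue as follows. Since $\varepsilon_g(\phi)=\phi(g)$, the condition $\varepsilon_g|_{K^\perp}=1$ is equivalent to $g\in\bigcap_{\phi\in K^\perp}\ker(\phi)$, so it suffices to show $\bigcap_{\phi\in K^\perp}\ker(\phi)=KO(G)$. Each $\phi\in K^\perp$ kills $K$ by definition and kills $O(G)$ because $O(G)$ is the intersection of all kernels of characters in $G^*$; hence every $\phi\in K^\perp$ factors through the natural epimorphism $\pi\colon G\to G/KO(G)$, and conversely every character of $G/KO(G)$ pulls back into $K^\perp$. Consequently $\bigcap_{\phi\in K^\perp}\ker(\phi)=\pi^{-1}\bigl(\bigcap_{\bar\phi\in(G/KO(G))^*}\ker(\bar\phi)\bigr)=\pi^{-1}\bigl(O(G/KO(G))\bigr)$, and $O(G/KO(G))=O(G)KO(G)/KO(G)=\{1\}$ by Proposition~\ref{prop zeta is surjective}(c) together with $O(G)\le KO(G)$. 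Therefore the intersection equals $\ker(\pi)=KO(G)$, as claimed.

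None of these steps is genuinely hard; the arguments are bookkeeping with the identification $\zetabar_G$ and the behaviour of $O(-)$ under quotients. The only place demanding a little care is part (d), where one must recognise the subgroup $\bigcap_{\phi\in K^\perp}\ker(\phi)$ as $KO(G)$ by passing to $G/KO(G)$ and invoking $O(G/KO(G))=\{1\}$; and, in part (c), one must be careful about the direction of the isomorphism and about interpreting the coset condition in $G/N$ correctly. Everything else is a direct application of Definition~\ref{def BA-pair}, Proposition~\ref{prop s comp inf}(a), and Proposition~\ref{prop zeta is surjective}.
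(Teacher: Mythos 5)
Your proof is correct; the paper declares the proofs of this proposition ``straightforward and left to the reader'', and your argument is exactly the intended bookkeeping via the isomorphism $\zetabar_G$, the naturality $\varepsilon_h\circ f^*=\varepsilon_{f(h)}$, Definition~\ref{def BA-pair}, Proposition~\ref{prop s comp inf}(a), and Proposition~\ref{prop zeta is surjective}. The only step worth making explicit in part (d) is that $KO(G)$ is normal in $G$ (since $[G,G]\le O(G)\le KO(G)$), which is what legitimizes forming $G/KO(G)$ and applying Proposition~\ref{prop zeta is surjective}(c) with $N=KO(G)$; with that remark added, the argument is complete.
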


%%%%%%%%%%%%%%%%%% BIBILIOGRAPHY %%%%%%%%%%%%%%%%%%%%%%%%%%%%%%%%%%%%

\end{document}